\numberwithin{equation}{section}
\newtheorem{theorem}{Theorem}[section]
\newtheorem{lemma}[theorem]{Lemma}
\newtheorem{deflemma}[theorem]{Lemma and Definition}
\newtheorem{proposition}[theorem]{Proposition}
\newtheorem{corollary}[theorem]{Corollary}
\newtheorem{definition}[theorem]{Definition}
\newtheoremstyle{rem}{}{}{}{0pt}{\bfseries}{.}{5pt}{}
\theoremstyle{rem}
\newtheorem*{remark}{Remark}
\newtheoremstyle{ack}{}{}{}{\parindent}{\itshape\bfseries}{.}{5pt}{}
\theoremstyle{ack}
\newtheoremstyle{ass}{}{}{\itshape}{0pt}{\bfseries}{:}{5pt}{}
\theoremstyle{ass}
\newtheorem{ass}{Assumption}
\newcommand\eps\varepsilon
\newcommand\flow\varphi
\newcommand\abs[1]{\left\lvert#1\right\rvert}
\newcommand\norm[1]{\left\lVert#1\right\rVert}
\newcommand\E[1]{\mathbf{E}\!\left[#1\right]}
\newcommand\R{\mathbf{R}}
\newcommand\Z{\mathbf{Z}}
\newcommand\N{\mathbf{N}}
\newcommand\dx{\mathrm{d}}
\newcommand\sprod[1]{\left\langle#1\right\rangle}
\DeclareMathOperator*\esssup{ess\,\,sup}
\DeclareMathOperator{\Lipschitz}{Lip}
\newcommand\Lip{\Lipschitz}
\DeclareMathOperator{\id}{id}
\DeclareMathOperator{\graph}{graph}
\DeclareMathOperator{\dimension}{dim}
\DeclareMathOperator{\Emb}{Emb}
\DeclareMathOperator{\determinante}{det}
\newcommand\uTS[1]{H_{#1}(\omega,x)}
\newcommand\sTS[1]{E_{#1}(\omega,x)}
\newcommand\Lnorm[2][\empty]{\ifthenelse{\equal{#1}{\empty}} {\left\lVert#2\right\rVert_{(\omega,x),n}} {\left\lVert#2\right\rVert_{(\omega,x),#1}} }
\newcommand\LnormAt[3][\empty]{\ifthenelse{\equal{#1}{\empty}} {\left\lVert#3\right\rVert_{(\omega,#2),n}} {\left\lVert#3\right\rVert_{(\omega,#2),#1}} }
\newcommand\Linner[3][\empty]{\ifthenelse{\equal{#1}{\empty}} {\left\langle #2, #3 \right\rangle_{(\omega,x),n}} {\left\langle #2, #3 \right\rangle_{(\omega,x),#1}}}
\newcommand\sball[3][\empty]{\ifthenelse{\equal{#1}{\empty}} {B_{#3}^s\left(#2\right)} {B_{#3}^s\left(#1,#2\right)} }
\newcommand\uball[3][\empty]{\ifthenelse{\equal{#1}{\empty}} {B_{#3}^u\left(#2\right)} {B_{#3}^u\left(#1,#2\right)} }
\newcommand\uballat[4][\empty]{\ifthenelse{\equal{#1}{\empty}} {B_{#4,#3}^u\left(#2\right)} {B_{#4,#3}^u\left(#1,#2\right)} }
\newcommand\sLball[3][\empty]{\ifthenelse{\equal{#1}{\empty}} {\tilde B_{#3}^s\left(#2\right)} {\tilde B_{#3}^s\left(#1,#2\right)} }
\newcommand\uLball[3][\empty]{\ifthenelse{\equal{#1}{\empty}} {\tilde B_{#3}^u\left(#2\right)} {\tilde B_{#3}^u\left(#1,#2\right)} }
\newcommand\ball[3][\empty]{\ifthenelse{\equal{#1}{\empty}} {B_{#3}\left(#2\right)} {B_{#3}(#1,#2)} }
\newcommand\Lball[3][\empty]{\ifthenelse{\equal{#1}{\empty}} {\tilde B_{#3}\left(#2\right)} {\tilde B_{#3}\left(#1,#2\right)} }
\newcommand\pLball[3][\empty]{\ifthenelse{\equal{#1}{\empty}} {\tilde U \left(#2,#3\right)} {\tilde U_{#1} \left(#2,#3\right)} }
\newcommand\prodm[1]{\nu^{\N} \times \mu \left(#1\right)}
\newcommand\collLSM[2]{\mathcal{F}_{\Delta^l_{\omega}}(#1,#2)}
\newcommand\localLSM[2]{\tilde\Delta_{\omega}^{l}(#1,#2)}
\newcommand\LSM
\newcommand\rref[1]{(\ref{#1})}
\title{Pesin's Formula for Random Dynamical Systems on $\R^d$}
\author{Moritz Biskamp\footnote{Institut f\"ur Mathematik, MA 7-4, Technische Universit\"at Berlin, Stra{\ss}e des 17.\ Juni 136, D-10623 Berlin, biskamp@math.tu-berlin.de}}
\begin{document}

\maketitle

\begin{abstract}
Pesin's formula relates the entropy of a dynamical system with its positive Lyapunov exponents. It is well known, that this formula holds true for random dynamical systems on a compact Riemannian manifold with invariant probability measure which is absolutely continuous with respect to the Lebesgue measure. We will show that this formula remains true for random dynamical systems on $\R^d$ which have an invariant probability measure absolutely continuous to the Lebesgue measure on $\R^d$. Finally we will show that a broad class of stochastic flows on $\R^{d}$ of a Kunita type satisfies Pesin's formula.
\end{abstract}

\vspace{1ex}
\noindent{\small{\it Keywords}: Ergodic theory, random dynamical systems, entropy, Pesin theory, Lyapunov exponents, stochastic flows}

\vspace{2ex}
\noindent{\small {\it Mathematics Subject Classifications}: primary  37A35 37H15 37D25; secondary 37A50 60H10}

\section{Introduction}

% Was ist Entropy?
Entropy can be seen as a measure for uncertainty or for the chaotic behaviour of an evolution process. In information theory entropy is often interpreted as the minimal number of yes-no questions that are necessary to encrypt a finite signal. Here we are interested in the entropy of a dynamical system. A deterministic dynamical system preserving a smooth probability measure is the process generated by successive applications of a diffeomorphism on some space or manifold. The entropy for such a system given a partition of the space is roughly speaking the asymptotic exponential rate of yes-no questions necessary to encrypt the path of a particle evolving with this system with respect to this partition weighted with the invariant measure (see definition below).

Pesin's formula relates the entropy of a smooth dynamical system with its positive Lyapunov exponents. This remarkable formula was first established for deterministic dynamical systems on a compact Riemannian manifold preserving a smooth measure (see \citep{Pesin76}, \citep{Pesin77} and \citep{Pesin77b}). Pesin first proved general results concerning the existence of families of stable manifolds and their absolute continuity (see \citep{Pesin76}) and deduced therefrom the formula. Later, results were generalized to deterministic dynamical systems preserving only a Borel measure (see \citep{Ruelle79}, \citep{Fathi83}) and for dynamical systems with singularities (see \citep{Katok86}). In \citep{Barreira07} one finds a comprehensive and self-contained account on the theory dynamical systems with nonvanishing Lyapunov exponents, i.e. non-uniform hyperbolicity theory.

%random dynamical system, inv measure and stable manifolds
In this article we are interested in random dynamical systems, i.e. the evolution of the process generated by the successive application of {\it random} diffeomorphisms which will be assumed to be chosen independently according to some probability measure on the set of diffeomorphisms. Since it is much too restrictive to assume invariance of some probability measure for each diffeomorphism, the notion of invariance was extended to random dynamical system in \citep{Kifer86}: a probability measure is said to be invariant for a random dynamical system if the average over all possible diffeomorphisms preserves the measure (see definition below). 
The notion of entropy for random dynamical systems can not directly be deduced from the deterministic case, since in many interesting cases this quantity equals infinity (see \citep[Theorem II.1.2]{Kifer86}). Thus Kifer extended the notion of entropy in \citep{Kifer86} to random dynamical systems: Roughly speaking entropy of a random dynamical system given a partition of the state space is the asymptotic exponential rate of the {\it averaged} number of yes-no questions necessary to encrypt the path of a particle evolving with this system with respect to this partition weighted with the invariant measure. In terms of conditional entropy this coincides with the conditional entropy of the skew product given the randomness (see Section \ref{subsec:EntropyOfRandDiffeos}).
By this Pesin's results were generalized in \citep{Kifer86}, \citep{Ledrappier88} and \citep{Liu95} to random dynamical systems on compact Riemannian manifolds.

%application to Stochastic Flows
In this article we will extend the results to random dynamical systems on the non-compact space $\R^d$. The main application we have in mind when we consider random dynamical systems on $\R^d$ are stochastic flows on $\R^d$ with stationary and independent increments preserving a probability measure that is absolutely continuous to the Lebesgue measure on $\R^d$. In \citep{Arnold95} it was proven that under some regularity assumptions there is a one to one relation between random dynamical systems and stochastic flows of a Kunita type (see \citep{Kunita90}). In Section \ref{sec:ApplicationToFlows} we will show that the assumptions (see Section \ref{sec:rds}) are satisfied for a broad class of stochastic flows which have an invariant probability measure.

First we will introduce the formal concept of entropy and conditional entropy of partitions and measure preserving transformations (see Section \ref{sec:Preliminaries}). After we have defined random dynamical systems we will present some facts on entropy for random dynamical systems and the existence of Lyapunov exponents (see Section \ref{sec:EntropyOfRDS}).

% Stable manifolds
To bound the entropy from below we have to construct a proper partition (see Section \ref{sec:ConstructionPartition}) such that the entropy of the random dynamical system given this partition can be bounded from below by its positive Lyapunov exponents. This partition will be constructed via local stable manifolds. Hence we will present the construction and the existence of local stable manifolds for random dynamical systems on $\R^d$ which have an invariant probability measure in Section \ref{sec:StableManifold}. This section follows very closely the general plan of \citep{Liu95}. Roughly speaking, the stable manifold at any point $x$ in space consists of those points that converge by application of the iterated functions with exponential speed to the iterated of $x$. One important construction within the proof is to define sets, nowadays called Pesin sets, which are chosen in such a way that one has uniform hyperbolicity on these sets (see Section \ref{sec:lyapunovmetric}), i.e. uniform bounds (in space {\it and} randomness) on the behaviour of the differential of the iterated maps (see Lemma \ref{lem:ExistenceOfl}).

In Sections \ref{sec:TheoremAbsoluteContinuity} and \ref{sec:AbsContCondMeasures} we state the theorems on the absolute continuity property. These basically say that the conditional measure with respect to the family of local stable manifolds of the volume on the state space is absolutely continuous (in fact, even equivalent) to the induced volume on the local stable manifolds. This is a crucial property within the construction of the partition mentioned in the previous paragraph and is proven in \citep{Biskamp11b}.

Finally in Section \ref{sec:proofPesin} we state the proof of Pesin's formula for random dynamical systems on $\R^d$ which have an invariant probability measure which is absolutely continuous to the Lebesgue measure on $\R^d$. First, we will bound the entropy from below following the proof of \citep[Chapter IV]{Liu95} and using the results from the previous sections (see Section \ref{sec:EstimateFromBelow}). The estimate from above (see Section \ref{sec:EstimateFromAbove}) was established in \citep{bargen10} for certain stochastic flows, but its proof can be applied to our situation by changing only two estimates in the proof.

Let us emphasize that we obviously can not equip the space of twice continuously differentiable diffeomorphisms on $\R^d$ with the uniform topology, as done in the case of a compact state space. Here we will use the topology induced by uniform convergence on compact sets (see \citep[Section 4.1]{Kunita90}). Clearly by this we lose the uniform bounds used in \citep{Liu95} to establish local stable manifolds (in particular the counterpart of Lemma \ref{lem:ExistenceOfr}). To replace these uniform bounds we need to assume certain integrability assumptions (see Section \ref{sec:rds}). As already mentioned we will show in Section \ref{sec:ApplicationToFlows} that all these assumptions are satisfied for a broad class of stochastic flows on $\R^d$.
\section{Preliminaries} \label{sec:Preliminaries}

We will give a short introduction into (conditional) entropy of partitions and measure preserving transformations, mainly following \citep{Liu95}.

\subsection{Measurable Partitions}
Let $(X,\mathcal{B},\mu)$ a Lebesgue space. A partition of $X$ is a collection of non-empty disjoint sets that cover $X$. Subsets of $X$ that are unions of elements of a partition $\xi$ are called $\xi$-sets.

A countable family $\{B_\alpha : \alpha \in \mathcal{A}\}$ of measurable $\xi$-sets is said to be a basis of the partition $\xi$ if  for any two elements $C$ and $C'$ of $\xi$ there exists an $\alpha \in \mathcal{A}$ such that either $C \subset B_\alpha$, $C' \not\subset B_\alpha$ or $C' \subset B_\alpha$, $C \not\subset B_\alpha$. A partition which has a basis is called a measurable partition.

For $x \in X$ we will denote by $\xi(x)$ the element of the partition $\xi$ that contains $x$. If $\xi, \xi'$ are measurable partitions of $X$, we will write $\xi \leq \xi'$ if $\xi'(x) \subset \xi(x)$ for $\mu$-almost every $x \in X$.

For any system of measurable partitions $\{\xi_\alpha\}$ of $X$ there exists a product $\bigvee_\alpha \xi_\alpha$ defined as the measurable partition $\xi$ that satisfies the following two properties: 1) $\xi_\alpha \leq \xi$ for all $\alpha$; 2) if $\xi_\alpha \leq \xi'$ for all $\alpha$ then $\xi \leq \xi'$. Furthermore for any measurable partition $\{\xi_\alpha\}$ of $X$ there exists an intersection $\bigwedge_\alpha \xi_\alpha$ defined as the measurable partition $\xi$ that satisfies the following two properties: 1) $\xi_\alpha \geq \xi$ for all $\alpha$; 2) if $\xi_\alpha \geq \xi'$ for all $\alpha$ then $\xi \geq \xi'$.

Let us introduce the factor space $X / \xi$ of $X$ with respect to a partition $\xi$ whose points are the elements of $\xi$. Its measurable structure and measure $\mu_\xi$ is defined as follows: Let $p$ be the map that maps $x \in X$ to $\xi(x)$, then a set $Z$ is considered to be measurable if $p^{-1}(Z) \in \mathcal{B}$ and we define $\mu_\xi(Z) := \mu(p^{-1}(Z))$. Let us remark that if $\xi$ is a measurable partition then $X/\xi$ is again a Lebesgue space.

For measurable partitions $\xi_n$, $n \in \N$ and $\xi$ of $X$ the symbol $\xi_n \nearrow \xi$ indicates that $\xi_1 \leq \xi_2 \leq \dots$ and $\bigvee_n \xi_n = \xi$. Similarly the symbol $\xi_n \searrow \xi$ indicates that $\xi_1\geq \xi_2 \geq \dots$ and $\bigwedge_n \xi_n = \xi$.

For a measurable partition $\xi$ the $\sigma$-algebra generated by $\xi$ consists of those measurable sets of $X$ that are (arbitrary) unions of $\xi$-sets. Conversely for any sub-$\sigma$-algebra there exists a generating measurable partition. Thus in the future we will often not distinguish between the $\sigma$-algebra and its generating partition.

One very important property of measurable partitions of a Lebesgue space is that associated to such a partition $\xi$ there exists a unique system of measures $\{\mu_C\}_{C \in \xi}$ satisfying the following two conditions:
\begin{enumerate}
\item $(C,\mathcal{B}|_C,\mu_C)$ is a Lebesgue space for $\mu_\xi$-a.e. $C \in X/\xi$
\item for every $A \in \mathcal{B}$ the map $C \mapsto \mu_C(A \cap C)$ is measurable on $X/\xi$ and
\begin{align*}
\mu(A) = \int_{X/\xi} \mu_C(A \cap C) \dx \mu_\xi(C).
\end{align*}
\end{enumerate}
Such a system of measures $\{\mu_C\}_{C\in \xi}$ is called a {\it canonical system of conditional measures} of $\mu$ associated to the partition $\xi$. 

More detailed informations on measurable partitions can be found in \citep[Section 0.2]{Liu95}.

\subsection{Conditional Entropies of Measurable Partitions}

Let us again assume that $(X,\mathcal{B}, \mu)$ is a Lebesgue space. If $\xi$ is a measurable partition of $X$ and $C_1, C_2, \dots$ are the elements of $\xi$ with positive $\mu$ measure then we define the {\it entropy} of the partition $\xi$ by
\begin{align*}
 H_\mu(\xi) =
\begin{cases}
- \sum_k \mu(C_k)\log(\mu(C_k)) & \text{if } \mu(X \setminus \bigcup_k C_k) = 0 \\
+ \infty & \text{if } \mu(X \setminus \bigcup_k C_k) > 0.
\end{cases}
\end{align*}
Let us remark that the sum in the first part can be finite or infinite.

If $\xi$ and $\eta$ are two measurable partitions of $X$, then almost every partition $\xi_B$, which is the restriction $\xi|_B$ of $\xi$ to $B \in X/\eta$, has a well defined entropy $H_{\mu_B}(\xi_B)$. This is a non-negative measurable function on the factor space $X/\eta$, called the {\it conditional entropy} of $\xi$ with respect to $\eta$. Let us set
\begin{align*}
H_\mu(\xi|\eta) := \int_{X/\eta} H_{\mu_B}(\xi_B) \dx \mu_\eta(B),
\end{align*}
which is the {\it mean conditional entropy} of $\xi$ with respect to $\eta$. This number can be finite or infinite. If $\eta$ is the trivial partition whose single element is $X$ itself, then clearly $H_\mu(\xi|\eta)$ coincides with $H_\mu(\xi)$. Furthermore it is easy to see that
\begin{align} \label{eq:alternativeCondEntropy}
H_\mu(\xi|\eta) = - \int_X \log\left(\mu_{\eta(x)}(\xi(x) \cap \eta(x))\right) \dx \mu(x).
\end{align}
If the partition $\eta$ generates the $\sigma$-algebra $\mathcal{G}$ then the conditional entropy can be expressed in terms of conditional probabilities, i.e.
\begin{align*}
H_\mu(\xi|\eta) =H_\mu(\xi|\mathcal{G}) :=  - \int_X \sum_{C \in \xi} \mu(C | \mathcal{G}) \log \mu(C | \mathcal{G}) \dx \mu.
\end{align*}
Let us state some basic properties of the conditional entropy (see \citep[Section 0.3]{Liu95}).
\begin{lemma}
Let $\xi_n$, $\eta_n$ for $ n \in \N$ and $\xi$, $\eta$ and $\zeta$ be measurable partitions of $X$. Then we have
\begin{enumerate}
\item if $\xi_n \nearrow \xi$ then $H_\mu(\xi_n|\eta) \nearrow H_\mu(\xi|\eta)$;
\item if $\xi_n \searrow \xi$ and $\eta$ satisfies $H_\mu(\xi_1|\eta) < \infty$ then $H_\mu(\xi_n|\eta) \searrow H_\mu(\xi|\eta)$;
\item $H_\mu(\xi \vee \eta|\zeta) = H_\mu(\xi|\zeta) + H_\mu(\eta|\xi \vee \zeta)$;
\item if $\eta_n \nearrow \eta$ and $\xi$ satisfies $H_\mu(\xi|\eta_1) < \infty$ then $H_\mu(\xi|\eta_n) \searrow H_\mu(\xi|\eta)$;
\item if $\eta_n \searrow \eta$ then $H_\mu(\xi|\eta_n) \nearrow H_\mu(\xi|\eta)$.
\end{enumerate}
Further if $(X_i, \mathcal{B}_i, \mu_i)$ for $i=1,2$ are two Lebesgue spaces and $T$ is a measure-preserving transformation from $(X_1, \mathcal{B}_1, \mu_1)$ to $(X_2, \mathcal{B}_2, \mu_2)$, then for any measurable partition $\xi$ and $\eta$ of $X_2$ we have
\begin{align*}
H_{\mu_1}(T^{-1}\xi| T^{-1}\eta) = H_{\mu_2}(\xi|\eta).
\end{align*}
\end{lemma}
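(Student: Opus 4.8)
The plan is to route everything through the pointwise \emph{information function} $I_\mu(\xi|\eta)(x):=-\log\mu_{\eta(x)}\bigl(\xi(x)\cap\eta(x)\bigr)$, which by \rref{eq:alternativeCondEntropy} integrates against $\mu$ to $H_\mu(\xi|\eta)$ and, when $\xi$ is at most countable, equals $\sum_{C\in\xi}\mathbf{1}_C(x)\bigl(-\log\mu(C|\sigma(\eta))(x)\bigr)$. I would first prove i) and ii), where the conditioning partition is fixed; then the addition formula iii) together with the two monotonicity principles it yields; then iv) and v); and finally the invariance under $T$. In iii) and in iv)--v) I would treat at most countable $\xi$ first and pass to an arbitrary $\xi$ by writing it as an increasing limit of countable partitions and invoking i).

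Statements i) and ii) require no martingale theory. If $\xi_n\nearrow\xi$, then $\bigcap_n\xi_n(x)=\xi(x)$ for $\mu$-a.e.\ $x$, since $\bigvee_n\xi_n=\xi$ (see \citep[Section 0.2]{Liu95}), so $\xi_n(x)\cap\eta(x)\searrow\xi(x)\cap\eta(x)$; continuity from above of the probability measure $\mu_{\eta(x)}$ then gives $I_\mu(\xi_n|\eta)\nearrow I_\mu(\xi|\eta)$ pointwise, and monotone convergence yields i) in $[0,+\infty]$. If $\xi_n\searrow\xi$, then symmetrically $\bigcup_n\xi_n(x)=\xi(x)$ a.e., so $I_\mu(\xi_n|\eta)\searrow I_\mu(\xi|\eta)$ pointwise, and, $\xi_1$ being the finest of the $\xi_n$, one has $0\le I_\mu(\xi_n|\eta)\le I_\mu(\xi_1|\eta)$ with the majorant $\mu$-integrable by the hypothesis $H_\mu(\xi_1|\eta)<\infty$; dominated convergence gives ii).

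For iii) with countable $\xi,\eta$ I would split $\log\mu(C\cap D|\sigma(\zeta))=\log\mu(C|\sigma(\zeta))+\log\bigl(\mu(C\cap D|\sigma(\zeta))/\mu(C|\sigma(\zeta))\bigr)$ inside the conditional-probability formula for $H_\mu(\xi\vee\eta|\zeta)$ and sum over $C\in\xi$, $D\in\eta$ (Tonelli; all terms $\ge0$), using the elementary disintegration $\mu(D|\sigma(\xi)\vee\sigma(\zeta))=\sum_{C\in\xi}\mathbf{1}_C\,\mu(C\cap D|\sigma(\zeta))/\mu(C|\sigma(\zeta))$ to recognise the two resulting summands as $H_\mu(\xi|\zeta)$ and $H_\mu(\eta|\xi\vee\zeta)$; the case of arbitrary $\xi,\eta$ then follows by i). Two monotonicity principles drop out of this: $H_\mu(\cdot|\eta)$ is nondecreasing in its first argument (instantiate iii) with $(\xi',\xi'',\eta)$ for $\xi'\le\xi''$ and use $\xi'\vee\xi''=\xi''$), and $H_\mu(\xi|\cdot)$ does not increase when the conditioning partition is refined (for countable $\xi$, from concavity of $t\mapsto-t\log t$ and the conditional Jensen inequality applied to the martingale of conditional probabilities $\mu(C|\sigma(\eta_n))$, $C\in\xi$; then for arbitrary $\xi$ by taking suprema over countable refinements). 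The invariance under a measure-preserving $T\colon(X_1,\mathcal{B}_1,\mu_1)\to(X_2,\mathcal{B}_2,\mu_2)$ is a change of variables: $T$ maps the atom $(T^{-1}\eta)(x)$ onto $\eta(Tx)$ and, by uniqueness of the canonical system of conditional measures, pushes $(\mu_1)_{(T^{-1}\eta)(x)}$ forward to $(\mu_2)_{\eta(Tx)}$, whence $I_{\mu_1}(T^{-1}\xi|T^{-1}\eta)=I_{\mu_2}(\xi|\eta)\circ T$ $\mu_1$-a.e., and integrating and using $T_*\mu_1=\mu_2$ gives the stated equality.

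Statements iv) and v) vary the conditioning partition, and here L\'evy's martingale convergence theorems enter: for each $C\in\xi$, $\mu(C|\sigma(\eta_n))\to\mu(C|\sigma(\eta))$ $\mu$-a.e.\ --- by the upward theorem if $\eta_n\nearrow\eta$ (as $\bigvee_n\sigma(\eta_n)$ generates $\sigma(\eta)$ up to null sets) and by the downward theorem if $\eta_n\searrow\eta$ --- hence $I_\mu(\xi|\eta_n)\to I_\mu(\xi|\eta)$ a.e.\ in both cases. For v), $H_\mu(\xi|\eta_n)$ is nondecreasing and bounded above by $H_\mu(\xi|\eta)$ (coarsening the conditioning increases the entropy), so Fatou's lemma supplies the reverse inequality and forces $H_\mu(\xi|\eta_n)\nearrow H_\mu(\xi|\eta)$. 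Statement iv) is the one I expect to be the main obstacle: there $H_\mu(\xi|\eta_n)$ is \emph{nonincreasing}, so monotonicity works against Fatou and one genuinely needs an integrable dominating function for the family $\{I_\mu(\xi|\eta_n)\}_n$ --- this is where the hypothesis $H_\mu(\xi|\eta_1)<\infty$ is used, through a maximal inequality of Chung's type which, together with $\sigma(\eta_1)\subseteq\sigma(\eta_n)$ for all $n$, bounds $\int_X\sup_n I_\mu(\xi|\eta_n)\,\dx\mu$ and lets dominated convergence finish the proof. A further point specific to iv) is that, unlike in i), ii), iii), v), the reduction from countable to arbitrary $\xi$ does not follow from merely interchanging the two monotone limits (an infimum and a supremum, which need not commute), so for uncountable $\xi$ the maximal-inequality estimate has to be carried out directly at the level of the $\xi$-disintegrations; all of this is done in \citep[Section 0.3]{Liu95}.
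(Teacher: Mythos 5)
The paper's own ``proof'' is merely the citation to \citep{Rohlin67} and \citep[Section 0.3]{Liu95}, so there is not much to compare at the level of detail; your plan (information function $I_\mu(\xi|\eta)(x)=-\log\mu_{\eta(x)}(\xi(x)\cap\eta(x))$, monotone/dominated convergence for i)--ii), additive decomposition for iii), L\'evy martingale convergence plus a Chung-type maximal inequality for iv)--v), change of variables for the last claim) is indeed the standard route taken in those references and the overall architecture is right. There is, however, one genuine gap.

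In ii) you assert that ``symmetrically $\bigcup_n\xi_n(x)=\xi(x)$ a.e.''\ whenever $\xi_n\searrow\xi$, and then invoke the hypothesis $H_\mu(\xi_1|\eta)<\infty$ only to produce an integrable majorant. But that set-theoretic claim is \emph{not} the mirror image of the increasing case and is false in general. Take $X=\{0,1\}^{\N}$ with Bernoulli $(1/2)$ measure and let $\xi_n$ be the partition by the tail coordinates $(x_n,x_{n+1},\dots)$; then $\xi_n$ decreases, $\bigwedge_n\xi_n$ is the trivial partition by the Kolmogorov zero--one law, yet $\bigcup_n\xi_n(x)$ is the countable (hence null) tail-equivalence class of $x$, nowhere near $X$. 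What saves the argument is precisely the entropy hypothesis, used \emph{before} the passage under the integral: since $H_\mu(\xi_1|\eta)<\infty$, for $\mu_\eta$-a.e.\ atom of $\eta$ each $\xi_n$ restricted to that atom has only countably many pieces of positive conditional measure, so the partition with atoms $\xi^*(x):=\bigcup_n\xi_n(x)$ is a countable \emph{measurable} partition of the fibre lying between $\xi$ and every $\xi_n$, whence $\xi^*=\bigwedge_n\xi_n=\xi$ there mod $0$. Only after this does the dominated-convergence step you describe apply. As written, the claim of pointwise convergence is unjustified and would fail without the hypothesis. A second, smaller, structural point: in iii) you deduce the general case from the countable one ``by i)'', but passing $\xi_n\nearrow\xi$ to the limit inside $H_\mu(\eta|\xi\vee\zeta)$ is a monotone limit in the \emph{conditioning} partition, i.e.\ statement iv), not i); cleaner, and what Rohlin actually does, is to prove the pointwise identity $I_\mu(\xi\vee\eta|\zeta)=I_\mu(\xi|\zeta)+I_\mu(\eta|\xi\vee\zeta)$ directly for arbitrary measurable partitions from the tower (iterated-disintegration) property of canonical conditional measures and then integrate, which also avoids any risk of circular dependence among iii)--v). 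Everything else, in particular your treatment of iv) via a maximal inequality and your correct observation that the countable-to-general reduction there cannot be done by naively interchanging the two monotone limits, is well placed.
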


\begin{proof}
For the proof of property i) - v) see \citep{Rohlin67} and for the last one see \citep[Section 0.3]{Liu95}.
\end{proof}

\subsection{Conditional Entropies of Measure-Preserving Transformations}

Let us consider a  measure preserving transformation $T: X \to X$ and a $\sigma$-algebra $\mathcal{A} \subset \mathcal{B}$ with $T^{-1} \mathcal{A} \subset \mathcal{A}$ and denote the generating partition of $\mathcal{A}$ by $\zeta_{0}$. Then we can define the entropy of the transformation $T$ in the sense of Kifer (see \citep{Kifer86}) as follows.

\begin{deflemma} \label{deflemma:Entropy}
For any measurable partition $\xi$ with $H_\mu(\xi | \mathcal{A}) < +\infty$ the following limit exists
\begin{align*}
h^{\mathcal{A}}_\mu (T, \xi) = \lim_{n \to +\infty} \frac{1}{n} H_\mu \left( \bigvee_{i=0}^{n-1} T^{-i} \xi \bigg| \zeta_{0}\right).
\end{align*}
The number $h^{\mathcal{A}}_\mu (T, \xi)$ is called the $\mathcal{A}$-conditional entropy of $T$ with respect to $\xi$. Furthermore
\begin{align*}
h^\mathcal{A}_\mu(T) := \sup_{\xi} h^\mathcal{A}_\mu(T,\xi) \quad \text{and} \quad h_\mu(T) := \sup_{\xi} h^{\{\emptyset,X\}}_\mu(T,\xi)
\end{align*}
are called the $\mathcal{A}$-entropy of $T$ and entropy of $T$ respectively. Here the supremum is either taken over all partition $\xi$ with finite entropy or over all finite partitions.
\end{deflemma}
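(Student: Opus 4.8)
The plan is to establish existence of the limit by the classical subadditivity argument, adapted to the conditional (Kifer) setting. Write $\zeta_0$ for the generating partition of $\mathcal{A}$, and for $n\ge 1$ set
\[
a_n := H_\mu\!\left(\bigvee_{i=0}^{n-1} T^{-i}\xi \,\Big|\, \zeta_0\right).
\]
First I would check that $a_n<\infty$ for every $n$: this follows from the hypothesis $H_\mu(\xi|\mathcal{A})<\infty$ together with property iii) of the preceding Lemma, since $H_\mu(\xi\vee\eta|\zeta)=H_\mu(\xi|\zeta)+H_\mu(\eta|\xi\vee\zeta)\le H_\mu(\xi|\zeta)+H_\mu(\eta|\zeta)$ — in other words conditional entropy is subadditive in the first argument — and $H_\mu(T^{-i}\xi|\zeta_0)\le H_\mu(T^{-i}\xi|T^{-i}\zeta_0)=H_\mu(\xi|\zeta_0)=H_\mu(\xi|\mathcal{A})$, where the middle inequality uses $T^{-i}\zeta_0\le\zeta_0$ (a consequence of $T^{-1}\mathcal{A}\subset\mathcal{A}$) and property v), and the equality uses the transformation-invariance statement at the end of the Lemma.

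Next I would prove subadditivity $a_{m+n}\le a_m+a_n$. Split the join:
\[
\bigvee_{i=0}^{m+n-1} T^{-i}\xi = \left(\bigvee_{i=0}^{m-1} T^{-i}\xi\right)\vee T^{-m}\!\left(\bigvee_{i=0}^{n-1} T^{-i}\xi\right),
\]
and apply property iii) to get
\[
a_{m+n} = a_m + H_\mu\!\left(T^{-m}\eta_n \,\Big|\, \textstyle\bigvee_{i=0}^{m-1}T^{-i}\xi \vee \zeta_0\right),
\]
where $\eta_n:=\bigvee_{i=0}^{n-1}T^{-i}\xi$. Since conditioning on a finer partition decreases entropy (property v), together with the monotonicity $H_\mu(\,\cdot\,|\beta)\le H_\mu(\,\cdot\,|\beta')$ when $\beta\ge\beta'$), and since $\bigvee_{i=0}^{m-1}T^{-i}\xi\vee\zeta_0 \ge T^{-m}\zeta_0$ (using again $T^{-m}\zeta_0\le\zeta_0$), the second term is bounded above by $H_\mu(T^{-m}\eta_n|T^{-m}\zeta_0)=H_\mu(\eta_n|\zeta_0)=a_n$ by transformation-invariance. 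Hence $a_{m+n}\le a_m+a_n$, and Fekete's subadditive lemma yields existence of $\lim_{n\to\infty} a_n/n = \inf_n a_n/n$, which is exactly the asserted limit.

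The routine part is bookkeeping the monotonicity/invariance properties; the one genuine point requiring care — and what I expect to be the main obstacle — is the interplay between $T$-invariance of $\mathcal{A}$ (i.e. $T^{-1}\mathcal{A}\subset\mathcal{A}$, equivalently $T^{-1}\zeta_0\le\zeta_0$) and the conditioning partition, which is what makes the estimate $H_\mu(T^{-m}\eta_n\mid \cdots \vee\zeta_0)\le H_\mu(\eta_n\mid\zeta_0)$ go through; without $T^{-1}\mathcal{A}\subset\mathcal{A}$ one cannot absorb the shifted $\zeta_0$ and subadditivity fails. One should also note that the transformation-invariance identity in the Lemma is stated for a measure-preserving $T$ between possibly different spaces, so here it applies with $X_1=X_2=X$; and the finiteness of $a_1$ is what legitimizes all uses of properties ii) and iv) should they be needed. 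Finally, the claim holds verbatim whether the supremum defining $h^{\mathcal{A}}_\mu(T)$ ranges over finite partitions or over partitions of finite entropy, since the existence of the limit above is proved for each fixed admissible $\xi$ separately.
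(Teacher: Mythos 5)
Your argument is the classical Fekete subadditivity proof, which is exactly what the cited references (Kifer 1986, Liu--Qian Ch.\ 0) carry out, so the approach matches the one the paper defers to. The decomposition $\bigvee_{i=0}^{m+n-1}T^{-i}\xi = \bigl(\bigvee_{i=0}^{m-1}T^{-i}\xi\bigr)\vee T^{-m}\bigl(\bigvee_{i=0}^{n-1}T^{-i}\xi\bigr)$ together with property iii), the absorption $T^{-m}\zeta_0\leq\zeta_0$ coming from $T^{-1}\mathcal{A}\subset\mathcal{A}$, and the transformation-invariance identity are precisely the ingredients needed, and you have identified correctly that $T^{-1}\mathcal{A}\subset\mathcal{A}$ is the hypothesis that makes the absorption $H_\mu(T^{-m}\eta_n\mid\cdots\vee\zeta_0)\leq H_\mu(\eta_n\mid\zeta_0)$ legitimate. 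One small bookkeeping remark: the monotonicity in the conditioning partition you invoke (``finer conditioning gives smaller entropy'') is not literally item v) of the preceding Lemma but follows from iv) or v) applied to eventually-constant sequences; this is standard and the argument goes through. The final sentence of the statement, that the supremum can equivalently be taken over finite partitions or over partitions of finite conditional entropy, is a separate (also standard) fact which your write-up does not establish, but it is part of the definition rather than of the limit assertion, so that omission is consistent with the scope of the Lemma.
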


\begin{proof}
See \citep{Kifer86} and \citep[Section 0.4 and Section 0.5]{Liu95}.
\end{proof}

If we want to define entropy for any measurable partition of $X$ we need to assume that the $\sigma$-algebra $\mathcal{A}$ is invariant under the transformation $T$, i.e. the following definition.

\begin{definition} \label{def:Entropy2}
Assume that $T^{-1}\mathcal{A} = \mathcal{A}$. Then for any measurable partition $\xi$ of $X$ we define
\begin{align*}
h^{\mathcal{A}}_{\mu}(T,\xi) = H_{\mu}\left(\xi\bigg|\bigvee_{k=1}^{+\infty}T^{-k}\xi \vee \zeta_{0}\right).
\end{align*}
\end{definition}

\begin{remark}
For any measurable partition $\xi$ that satisfies $H_\mu(\xi | \mathcal{A}) < +\infty$ the Definition \ref{deflemma:Entropy} and \ref{def:Entropy2} coincide (see \citep[Remark 0.5.1]{Liu95}).
\end{remark}

\section{Entropy and Lyapunov Exponents of Random Dynamical Systems} \label{sec:EntropyOfRDS}

In this section we will first introduce the notion of random dynamical systems. Then we will define its entropy and state the multiplicative ergodic theorem to define Lyapunov exponents. Here we are following \citep[Chapter I]{Liu95}.

\subsection{Random Dynamical Systems} \label{sec:rds}

Let us abbreviate the set of two-times differentiable diffeomorphisms on $\R^d$ by $\Omega$. The topology on $\Omega$ is the one induced by uniform convergence on compact sets for all derivatives up to order 2 as described in \citep[Section 3.1]{Kunita90}. With this topology $\Omega$ becomes a separable Banach space. Let us fix a Borel probability measure $\nu$ on $(\Omega, \mathcal{B}(\Omega))$, where $\mathcal{B}(\Omega)$ denotes the Borel $\sigma$-algebra of $\Omega$.

We are interested in ergodic theory of the evolution process generated by successive applications of randomly chosen maps from $\Omega$. These maps will be assumed to be independent and identically distributed with law $\nu$. Thus let
\begin{align*}
\left(\Omega^\N, \mathcal{B}(\Omega)^\N, \nu^\N\right) = \prod_{i =0}^{+\infty} (\Omega, \mathcal{B}(\Omega),\nu)
\end{align*}
be the infinite product of copies of the measure space $(\Omega, \mathcal{B}(\Omega),\nu)$. Let us define for every $\omega = (f_0(\omega), f_1(\omega), \dots) \in \Omega^\N$ and $n \geq 0$
\begin{align*}
f^0_\omega = \id, \qquad f^n_\omega = f_{n-1}(\omega) \circ f_{n-2}(\omega) \circ \dots \circ f_0(\omega).
\end{align*}
The random dynamical system generated by these composed maps, e.g. $\{f^n_\omega : n \geq 0, \omega \in (\Omega^{\N}, \mathcal{B}(\Omega)^{\N},\nu^{\N})\}$ will be referred to as $\mathcal{X}^+(\R^d, \nu)$.

Let us further define the two important spaces $\Omega^\N \times \R^d$ and $\Omega^\Z \times \R^d$, both equipped with the  product $\sigma$-algebras  $\mathcal{B}(\Omega)^\N \times \mathcal{B}(\R^d)$ and $\mathcal{B}(\Omega)^\Z \times \mathcal{B}(\R^d)$ respectively. As already mentioned above $\Omega$ is a separable Banach space by the choice of the uniform topology on compact sets. Hence we have
\begin{align*}
\mathcal{B}(\Omega)^\N \times \mathcal{B}(\R^d) &= \mathcal{B}(\Omega^\N \times \R^d), \\
\mathcal{B}(\Omega)^\Z \times \mathcal{B}(\R^d) &= \mathcal{B}(\Omega^\Z \times \R^d).
\end{align*}
Further let us denote by $\tau$ the left shift operator on $\Omega^\N$ and $\Omega^\Z$, namely
\begin{align*}
 f_n(\tau \omega) = f_{n+1}(\omega)
\end{align*}
for all $\omega = (f_0(\omega), f_1(\omega), \dots) \in \Omega^\N$, $n \geq 0$ and $\omega = (\dots, f_{-1}(\omega), f_0(\omega), f_1(\omega), \dots) \in \Omega^\Z$, $n \in \Z$ respectively. Finally let
\begin{align*}
F&: \Omega^\N \times \R^d \to \Omega^\N \times \R^d, &(\omega,x) \mapsto (\tau \omega, f_0(\omega) x),\\
G&: \Omega^\Z \times \R^d \to \Omega^\Z \times \R^d, &(\omega,x) \mapsto (\tau \omega, f_0(\omega) x).
\end{align*}
The function $F$ is often called the {\it skew product} of the system. The two systems $(\Omega^\N \times \R^d, F)$ and $(\Omega^\Z \times \R^d, G)$ will allow us to see the random dynamical system somehow as a deterministic one on the product space.

\begin{definition} \label{def:invariantMeasure}
A Borel probability measure $\mu$ on $\R^d$ is called an invariant measure of $\mathcal{X}^+(\R^d, \nu)$ if
\begin{align*}
\int_\Omega \mu(f^{-1}(\cdot)) \dx \nu(f) = \mu.
\end{align*}
\end{definition}

From now let us assume that there exists an invariant measure $\mu$ of $\mathcal{X}^+(\R^d, \nu)$ and let us denote the random dynamical system associated with $\mu$ by $\mathcal{X}^+(\R^d,\nu,\mu)$. From \citep[Lemma I.2.3]{Kifer86} we have the following Lemma, which relates the notion of invariance defined above with the invariance with respect to the skew product, i.e. the function $F$ on $\Omega^\N \times \R^d$.

\begin{lemma}
Let $\mu$ be a probability measure on $\R^d$. Then $\mu$ is an invariant measure of $\mathcal{X}^+(\R^d, \nu)$ (in the sense of Definition \ref{def:invariantMeasure}) if and only if $\nu^\N \times \mu$ is $F$-invariant, i.e. $(\nu^\N\times\mu) \circ F^{-1} = \nu^\N\times\mu$.
\end{lemma}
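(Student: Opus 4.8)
The claim is the equivalence between invariance of $\mu$ for the random dynamical system (Definition \ref{def:invariantMeasure}) and $F$-invariance of the product $\nu^\N \times \mu$ on $\Omega^\N \times \R^d$. Since this is a bijectivity statement between two measure-theoretic conditions, the plan is to test $F$-invariance on a generating $\pi$-system of $\mathcal{B}(\Omega)^\N \times \mathcal{B}(\R^d)$, namely on product sets $C \times A$ with $C \in \mathcal{B}(\Omega)^\N$ a cylinder set and $A \in \mathcal{B}(\R^d)$; by a monotone class / Dynkin argument it then suffices to verify the identity $(\nu^\N \times \mu)(F^{-1}(C \times A)) = (\nu^\N \times \mu)(C \times A)$ on such sets. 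Here I will use the fact, recorded in the excerpt, that $\Omega$ is a separable Banach space so that the product Borel $\sigma$-algebra coincides with the Borel $\sigma$-algebra of the product, which legitimises Fubini on these spaces.

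First I would compute the preimage: $F^{-1}(C \times A) = \{(\omega,x) : (\tau\omega, f_0(\omega)x) \in C \times A\} = \{(\omega,x) : \tau\omega \in C,\ x \in f_0(\omega)^{-1}A\}$. Now write the measure of this set using Fubini, integrating first over $x$ for fixed $\omega$ and then over $\omega$; since the shift $\tau$ acts only on the coordinates $f_1, f_2, \dots$ while $f_0(\omega)^{-1}A$ depends only on $f_0(\omega)$, and the coordinates are independent under $\nu^\N$, the integral factorises. Concretely, decomposing $\omega = (f_0, \omega')$ with $\omega' = \tau\omega$ distributed as $\nu^\N$ independently of $f_0 \sim \nu$, one gets
\begin{align*}
(\nu^\N \times \mu)(F^{-1}(C \times A)) &= \int_{\Omega^\N} \mathbf{1}_C(\tau\omega)\, \mu\bigl(f_0(\omega)^{-1}A\bigr)\, \dx\nu^\N(\omega)\\
&= \nu^\N(C) \int_\Omega \mu\bigl(f^{-1}A\bigr)\, \dx\nu(f).
\end{align*}
By Definition \ref{def:invariantMeasure}, the integral $\int_\Omega \mu(f^{-1}A)\,\dx\nu(f)$ equals $\mu(A)$ for every $A \in \mathcal{B}(\R^d)$ precisely when $\mu$ is invariant for $\mathcal{X}^+(\R^d,\nu)$, so the right-hand side becomes $\nu^\N(C)\,\mu(A) = (\nu^\N \times \mu)(C \times A)$, which is the desired equality on the generating family. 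For the converse direction one simply runs the same computation backwards: $F$-invariance tested on sets of the form $\Omega^\N \times A$ yields $\int_\Omega \mu(f^{-1}A)\,\dx\nu(f) = \mu(A)$ for all Borel $A$, which is exactly Definition \ref{def:invariantMeasure}.

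The only genuinely delicate point is the measurability and Fubini bookkeeping: one must check that $(\omega,x) \mapsto \mathbf{1}_A(f_0(\omega)x)$ is jointly measurable on $\Omega^\N \times \R^d$, which follows from continuity of the evaluation map $\Omega \times \R^d \to \R^d$, $(f,x)\mapsto f(x)$ in the chosen topology on $\Omega$, and that $f \mapsto \mu(f^{-1}A)$ is $\nu$-measurable, which is the content of (or an easy consequence of) \citep[Lemma I.2.3]{Kifer86}. Given these, the argument is a routine application of Fubini's theorem together with a monotone class extension from product cylinder sets to all of $\mathcal{B}(\Omega^\N \times \R^d)$; no further obstacle arises. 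Indeed, as the excerpt already attributes the statement to \citep[Lemma I.2.3]{Kifer86}, the proof here is just this short verification.
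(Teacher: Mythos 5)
Your proof is correct. The paper itself offers no proof (it simply cites \citep[Lemma I.2.3]{Kifer86}), and the argument you give — compute $F^{-1}(C\times A)$, apply Fubini, factor using the independence of $f_0$ and $\tau\omega$ under the product measure $\nu^{\N}$, reduce to the identity $\int_\Omega \mu(f^{-1}A)\,\dx\nu(f) = \mu(A)$, and extend from the generating $\pi$-system of product sets by a Dynkin/monotone-class argument — is the standard and essentially unique route, which is almost certainly what Kifer does as well. Your attention to the two measurability points (joint measurability of $(\omega,x)\mapsto\mathbf{1}_A(f_0(\omega)x)$, hence $\nu$-measurability of $f\mapsto\mu(f^{-1}A)$ via Fubini) is exactly what is needed to make the computation rigorous, and the observation that the converse follows by testing on $C=\Omega^{\N}$ closes both directions cleanly.
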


\begin{proof}
See \citep[Lemma I.2.3]{Kifer86}.
\end{proof}

Let us denote the tangent space at some point $y \in \R^{d}$ by $T_{y}\R^{d}$. Although this is quite unusual for systems on $\R^{d}$ we will stick to the notation from \citep{Liu95}. Let us define the following map, in differential geometry known as the exponential function, for $y \in \R^d$
\begin{align*}
\exp_y : \R^d \cong T_y \R^d \to \R^d, \quad x \mapsto \exp_y(x) := x + y,
\end{align*}
where $\cong$ means that the two spaces are isometrically isomorphic and thus can be identified. In the following we will use this often implicitely. Then we can define for $(\omega,x) \in \Omega^\N \times \R^d$ and $n \geq 0$ the map
\begin{align*}
F_{(\omega,x),n} : T_{f^{n}_\omega x} \R^d \to T_{f^{n+1}_\omega x}\R^d; \qquad
F_{(\omega,x),n} := \exp_{f^{n+1}_\omega x}^{-1} \circ f_n(\omega) \circ \exp_{f^{n}_\omega x},
\end{align*}
which is the evolution process centered around the trajectory of $x$, i.e. $F_{(\omega,x),n}(0) = 0$ for all $n \geq 0$. Throughout this article we will assume that the random dynamical system $\mathcal{X}^+(\R^d, \nu, \mu)$ satisfies the following integrability assumptions on $\nu$ and $\mu$:

\begin{ass} \label{ass1}
Let $\nu$ and $\mu$ satisfy
\begin{align*}
\log^+\abs{D_x f_0(\omega)} &\in \mathcal{L}^1(\nu^\N\times\mu),
\end{align*}
where $\abs{D_x f_0(\omega)}$ denotes the operator norm of the differential as a linear operator from $T_x \R^d$ to $T_{f_0(\omega) x} \R^d$ induced by the Euclidean scalar product and $\log^+(a) = \max\{\log(a); 0\}$.
\end{ass}

\begin{ass}\label{ass2}
Let $\nu$ and $\mu$ satisfy
\begin{align*}
\log\left(\sup_{\xi \in B_x(0,1)} \abs{D^2_\xi F_{(\omega,x),0}}\right) &\in \mathcal{L}^1(\nu^\N\times\mu),\\
\log\left(\sup_{\xi \in B_x(0,1)}\abs{D^2_{F_{(\omega,x),0}(\xi)} F^{-1}_{(\omega,x),0}}\right) &\in \mathcal{L}^1(\nu^\N\times\mu),
\end{align*}
where $B_x(0,r)$ denotes the open ball in $T_x \R^d$ around the origin with radius $r >0$ and $D^2$ is the second derivative operator.
\end{ass}

\begin{ass} \label{ass1b}
Let $\nu$ and $\mu$ satisfy
\begin{align*}
\log \abs{D_0F_{(\omega,x),0}^{-1}} = \log\abs{D_{f_0(\omega)x} f_0(\omega)^{-1}} &\in \mathcal{L}^1(\nu^\N\times\mu).
\end{align*}
\end{ass}

\begin{ass} \label{ass2b}
Let $\nu$ and $\mu$ satisfy
\begin{align*}
\log\abs{\determinante D_x f_0(\omega)} \in \mathcal{L}^1(\nu^\N \times \mu).
\end{align*}
\end{ass}

\begin{ass} \label{ass3}
Let $\mu$ and $\nu$ satisfy for all $n \in \N$
\begin{align*}
\sup_{\xi \in B(x,1)} \log^+\abs{D_\xi f^{n}_{\omega}} \in \mathcal{L}^{1}\left(\nu^\N \times \mu\right).
\end{align*}
\end{ass}

Assumption \ref{ass1} is necessary for the application of the multiplicative ergodic theorem (see next section), whereas Assumption \ref{ass1b} is used in Lemma \ref{lem:DerivativeEstimate} to achieve an estimate on the derivative of the inverse. Assumption \ref{ass2} is used in Lemma \ref{lem:ExistenceOfr} to get an uniform bound on the Lipschitz constant of the derivative and its inverse on some specific set $\Gamma_0 \subset \Omega^\N \times \R^d$. Finally we need Assumption \ref{ass2b} in Setion \ref{sec:EstimateFromBelow} for the final proof to bound the entropy from below and Assumption \ref{ass3} for the proof of the estimation from above (see Setion \ref{sec:EstimateFromAbove}).

Let us remark that Assumption \ref{ass2} can be relaxed by taking not the unit ball in $T_x\R^d$ into consideration but some ball with positive radius. Furthermore obviously Assumption \ref{ass3} implies Assumption \ref{ass1}, but we want to make clear which integrability assumption is used at what point of the proof.

\subsection{Measure-Theoretic Entropies of Random Diffeomorphisms} \label{subsec:EntropyOfRandDiffeos}

In this section we will define the notion of entropy for random dynamical systems. We are closely following \citep{Kifer86} and \citep{Liu95}.

\begin{deflemma}
For any finite partition $\xi$ of $\R^d$ the limit
\begin{align*}
h_\mu(\mathcal{X}^+(\R^d, \nu), \xi) := \lim_{n \to +\infty} \frac{1}{n} \int_{\Omega^\N} H_\mu\left(\bigvee_{k=0}^{n-1} (f^k_\omega)^{-1} \xi \right) \dx \nu^\N(\omega)
\end{align*}
exists. The number $h_\mu(\mathcal{X}^+(\R^d, \nu), \xi)$ is called the entropy of $\mathcal{X}^+(\R^d,\nu,\mu)$ with respect to $\xi$. The number
\begin{align*}
h_\mu(\mathcal{X}^+(\R^d, \nu)) := \sup_{\xi} h_\mu(\mathcal{X}^+(\R^d, \nu), \xi)
\end{align*}
is called the entropy of $h_\mu(\mathcal{X}^+(\R^d, \nu), \xi)$. Here the supremum is either taken over all partition $\xi$ with finite entropy or over all finite partitions.
\end{deflemma}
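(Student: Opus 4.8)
The plan is to reduce the existence of the limit
$h_\mu(\mathcal{X}^+(\R^d,\nu),\xi)=\lim_{n\to\infty}\frac1n\int_{\Omega^\N}H_\mu\bigl(\bigvee_{k=0}^{n-1}(f^k_\omega)^{-1}\xi\bigr)\dx\nu^\N(\omega)$
to the standard Kingman-type subadditivity argument, exactly as in the deterministic case, after checking that the sequence
$a_n:=\int_{\Omega^\N}H_\mu\bigl(\bigvee_{k=0}^{n-1}(f^k_\omega)^{-1}\xi\bigr)\dx\nu^\N(\omega)$
is finite for each $n$. Since $\xi$ is a finite partition of $\R^d$, $H_\mu(\xi)\le\log\#\xi<\infty$, and the subadditivity of entropy under refinement gives $H_\mu\bigl(\bigvee_{k=0}^{n-1}(f^k_\omega)^{-1}\xi\bigr)\le\sum_{k=0}^{n-1}H_\mu\bigl((f^k_\omega)^{-1}\xi\bigr)$. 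Now $\mu$ is invariant for $\mathcal{X}^+(\R^d,\nu)$, which after integrating over $\omega$ means $\nu^\N\times\mu$ is $F$-invariant; since $(f^k_\omega)^{-1}\xi$ pulls back under the $k$-th step of the skew product, one gets $\int_{\Omega^\N}H_\mu((f^k_\omega)^{-1}\xi)\dx\nu^\N(\omega)=H_{\nu^\N\times\mu}\bigl(\,\hat\xi\mid\mathcal{B}(\Omega)^\N\bigr)$ for a fixed partition $\hat\xi$ lifted from $\xi$, which is bounded by $\log\#\xi$. Hence $a_n\le n\log\#\xi<\infty$.

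Next I would establish subadditivity $a_{n+m}\le a_n+a_m$. Fix $\omega$ and split $\bigvee_{k=0}^{n+m-1}(f^k_\omega)^{-1}\xi = \bigl(\bigvee_{k=0}^{n-1}(f^k_\omega)^{-1}\xi\bigr)\vee\bigl(\bigvee_{k=n}^{n+m-1}(f^k_\omega)^{-1}\xi\bigr)$. Using property iii) of the Lemma on conditional entropies, $H_\mu(\zeta_1\vee\zeta_2)=H_\mu(\zeta_1)+H_\mu(\zeta_2\mid\zeta_1)\le H_\mu(\zeta_1)+H_\mu(\zeta_2)$, so
$H_\mu\bigl(\bigvee_{k=0}^{n+m-1}(f^k_\omega)^{-1}\xi\bigr)\le H_\mu\bigl(\bigvee_{k=0}^{n-1}(f^k_\omega)^{-1}\xi\bigr)+H_\mu\bigl(\bigvee_{k=n}^{n+m-1}(f^k_\omega)^{-1}\xi\bigr)$.
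For the second term, $\bigvee_{k=n}^{n+m-1}(f^k_\omega)^{-1}\xi=(f^n_\omega)^{-1}\bigvee_{j=0}^{m-1}(f^j_{\tau^n\omega})^{-1}\xi$ because $f^{n+j}_\omega=f^j_{\tau^n\omega}\circ f^n_\omega$; since $f^n_\omega$ need not preserve $\mu$ I instead integrate first. Integrating the displayed inequality over $\nu^\N$ and using that the pushforward of $\nu^\N\times\mu$ under $F^n$ projects to $\mu$ in the second coordinate (equivalently, $\int_{\Omega^\N}(f^n_\omega)_*\mu\,\dx\nu^\N(\omega)=\mu$ by invariance), the second integral equals $\int_{\Omega^\N}H_\mu\bigl(\bigvee_{j=0}^{m-1}(f^j_\omega)^{-1}\xi\bigr)\dx\nu^\N(\omega)=a_m$; here I use that the map $\omega\mapsto\tau^n\omega$ pushes $\nu^\N$ forward to $\nu^\N$ and the $F$-invariance of $\nu^\N\times\mu$ to handle the change of base point. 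This yields $a_{n+m}\le a_n+a_m$.

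Finally, by Fekete's subadditive lemma $\lim_{n\to\infty}a_n/n$ exists and equals $\inf_n a_n/n$, which is exactly the assertion. The one point needing care — and the main obstacle compared to the compact case — is the passage in the second step from an integral of $H_\mu$ against the reparametrised base point to $a_m$: one must verify that $\int_{\Omega^\N}H_{(f^n_\omega)_*\mu}(\,\cdot\,)\dx\nu^\N(\omega)$ can legitimately be replaced by $\int H_\mu(\,\cdot\,)\dx\nu^\N$. This follows from invariance of $\mu$ together with concavity of $t\mapsto -t\log t$ and Jensen's inequality applied to the decomposition $\mu=\int(f^n_\omega)_*\mu\,\dx\nu^\N(\omega)$; this is precisely the argument in \citep[Chapter I]{Liu95} and carries over verbatim since finiteness of all the entropies involved — guaranteed by $\xi$ being finite — makes every manipulation legitimate, with no compactness of the state space required. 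Hence the proof is identical to the one in \citep{Kifer86} and \citep[Chapter I]{Liu95}, to which we refer for the routine details.
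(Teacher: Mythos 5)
Your proposal is correct and follows essentially the same route as the references the paper defers to (Kifer and Liu, Chapter~I; the paper itself supplies no proof for this lemma): finiteness via $\#\xi<\infty$, subadditivity via the splitting $\bigvee_{k=0}^{n+m-1}(f^k_\omega)^{-1}\xi=\bigl(\bigvee_{k=0}^{n-1}(f^k_\omega)^{-1}\xi\bigr)\vee (f^n_\omega)^{-1}\bigvee_{j=0}^{m-1}(f^j_{\tau^n\omega})^{-1}\xi$ together with concavity of $\mu\mapsto H_\mu(\cdot)$, invariance $\int_\Omega f_*\mu\,\dx\nu(f)=\mu$, and independence of the first $n$ coordinates from the tail, followed by Fekete. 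One small imprecision: where you write that the second integral \emph{equals} $a_m$, the concavity/Jensen step only yields $\int_{\Omega^\N}H_{(f^n_\omega)_*\mu}\bigl(\bigvee_{j=0}^{m-1}(f^j_{\tau^n\omega})^{-1}\xi\bigr)\dx\nu^\N\le a_m$; the inequality is of course all that subadditivity requires, and you do correctly identify concavity plus Jensen as the mechanism in your closing remarks, so the logic of the argument is unaffected.
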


Let us denote the projection from $\Omega^\Z \times \R^d$ to $\Omega^\N \times \R^d$ by $P$, i.e.
\begin{align*}
P: \Omega^\Z \times \R^d \to \Omega^\N \times \R^d, \quad (\omega,x) \mapsto (\omega^+, x),
\end{align*}
where $\omega^+ := (f_0(\omega), f_1(\omega), \dots)$ for $\omega \in \Omega^\Z$ and let us define the following $\sigma$-algebras
\begin{align*}
\sigma_0 &:= \left\{\Gamma \times \R^d : \Gamma \in \mathcal{B}(\Omega^\N) \right\};\\
\sigma^+ &:= \left\{\prod_{-\infty}^{-1} \Omega \times \Gamma \times \R^d : \Gamma \in \mathcal{B}\left(\prod_0^{+\infty} \Omega\right) \right\};\\
\sigma &:= \left\{\Gamma' \times \R^d : \Gamma' \in \mathcal{B}(\Omega^\Z) \right\}.
\end{align*}
Clearly these $\sigma$-algebras correspond to the measurable partitions $\{\{\omega\}\times \R^d : \omega \in \Omega^\N\}$ of $\Omega^\N \times \R^d$, $\{\prod_{-\infty}^{-1}\Omega \times \{\omega\}\times \R^d : \omega \in \prod_0^{+\infty} \Omega \}$ of $\Omega^\Z \times \R^d$ and $\{\{\omega\}\times \R^d : \omega \in \Omega^\Z\}$ of $\Omega^\Z \times \R^d$ respectively. We will often use the same symbols for both, the $\sigma$-algebra and the partition.

Then we have the following result from \citep{Liu95}, which relates the entropy of $\mathcal{X}^+(\R^d,\nu, \mu)$ in the sense of the previous definition to the conditional entropy defined in Definition \ref{deflemma:Entropy}.

\begin{theorem} \label{thm:EqualityOfEntropies1}
If $\xi = \{A_1, \dots, A_n\}$ is a finite partition of $\R^d$ and $\eta = \{B_1,\dots,B_m\}$ a finite partition of $\Omega^\N$ then we have
\begin{align*}
h_\mu(\mathcal{X}^+(\R^d,\nu), \xi) = h^{\sigma_0}_{\nu^\N \times \mu}(F, \xi \times \eta),
\end{align*}
where $\xi \times \eta := \{A_i\times B_j: 1 \leq i \leq n, 1 \leq j \leq m\}$. Furthermore this yields
\begin{align*}
h_\mu(\mathcal{X}^+(\R^d,\nu)) = h^{\sigma_0}_{\nu^\N \times \mu}(F).
\end{align*}
\end{theorem}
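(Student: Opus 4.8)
The plan is to reduce the claimed identity to a statement about the skew product $F$ on $\Omega^\N \times \R^d$ and then to unwind the definitions of both sides. The essential bridge is formula \rref{eq:alternativeCondEntropy} together with the product structure of $\nu^\N \times \mu$ and the fact that the $\sigma$-algebra $\sigma_0$ is exactly $F^{-1}\sigma_0 \subset \sigma_0$-invariant in the sense needed for Definition \ref{deflemma:Entropy} to apply. First I would check that $F^{-i}(\xi \times \eta)$ refines, together with $\zeta_0$ (the generating partition of $\sigma_0$), into the partition whose atoms over a fixed $\omega$ are precisely the atoms of $\bigvee_{k=0}^{n-1}(f^k_\omega)^{-1}\xi$; this is a direct computation using $F^{-i}(A \times B) = \{(\omega,x) : \tau^i \omega \in \text{(shift of }B), f^i_\omega x \in A\}$ and the fact that conditioning on $\sigma_0$ fixes $\omega$, so the $\eta$-coordinates become irrelevant after conditioning.

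Second, I would use the disintegration of $\nu^\N \times \mu$ over $\sigma_0$: the conditional measure on the atom $\{\omega\}\times\R^d$ is just $\mu$ (transported to the fiber), since the measure is a product. Hence by the defining property of mean conditional entropy,
\begin{align*}
H_{\nu^\N\times\mu}\left(\bigvee_{i=0}^{n-1}F^{-i}(\xi\times\eta)\,\bigg|\,\zeta_0\right) = \int_{\Omega^\N} H_\mu\left(\bigvee_{k=0}^{n-1}(f^k_\omega)^{-1}\xi\right)\dx\nu^\N(\omega),
\end{align*}
because on the atom indexed by $\omega$ the partition $\bigvee_{i=0}^{n-1}F^{-i}(\xi\times\eta)$ restricts exactly to $\bigvee_{k=0}^{n-1}(f^k_\omega)^{-1}\xi$ (the $\eta$-factors drop out once $\omega$ is fixed). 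Dividing by $n$ and letting $n\to\infty$, the left-hand side converges to $h^{\sigma_0}_{\nu^\N\times\mu}(F,\xi\times\eta)$ by Definition \ref{deflemma:Entropy}, while the right-hand side converges to $h_\mu(\mathcal{X}^+(\R^d,\nu),\xi)$ by the definition in the preceding Lemma and Definition; this proves the first displayed equality. One should also verify the finiteness hypothesis $H_{\nu^\N\times\mu}(\xi\times\eta\,|\,\sigma_0) < \infty$ needed to invoke Definition \ref{deflemma:Entropy}, but this is immediate since $\xi\times\eta$ is finite.

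Third, for the supremum statement I would take the supremum over finite partitions on both sides. One inclusion is trivial: partitions of the form $\xi\times\eta$ are a special case of finite partitions of $\Omega^\N\times\R^d$, so $h_\mu(\mathcal{X}^+(\R^d,\nu)) \le h^{\sigma_0}_{\nu^\N\times\mu}(F)$. For the reverse inequality I would argue that in computing $h^{\sigma_0}_{\nu^\N\times\mu}(F,\zeta)$ for an arbitrary finite partition $\zeta$ of $\Omega^\N\times\R^d$, one may first refine $\zeta$ to a partition of product form $\xi\times\eta$ at no loss — using monotonicity of conditional entropy under refinement — and that, because we condition on $\sigma_0$, only the $\R^d$-coordinate of the refinement ultimately contributes; more precisely $h^{\sigma_0}_{\nu^\N\times\mu}(F,\xi\times\eta)$ depends on $\eta$ only through which fibers are resolved, and since conditioning on $\sigma_0$ already resolves all of $\Omega^\N$, one can let $\eta$ be arbitrary, giving $h^{\sigma_0}_{\nu^\N\times\mu}(F,\zeta) \le h^{\sigma_0}_{\nu^\N\times\mu}(F,\xi\times\eta) = h_\mu(\mathcal{X}^+(\R^d,\nu),\xi)$.

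\textbf{Main obstacle.} The routine part is the fiberwise identification of partitions; the delicate point is the reverse inequality for the suprema, specifically the claim that refining an arbitrary finite partition $\zeta$ to product form does not increase the $\sigma_0$-conditional entropy rate beyond what a pure-$\R^d$ partition already achieves. The subtlety is that a general $\zeta$ mixes the $\Omega^\N$ and $\R^d$ coordinates, and one must argue that, after conditioning on $\sigma_0$ (which trivializes the $\Omega^\N$-dependence), the only information in $\zeta$ that survives in the entropy computation is how $\zeta$ slices each fiber $\{\omega\}\times\R^d$; controlling this uniformly in $\omega$ via a single finite partition $\xi$ of $\R^d$ is where the argument needs care, and I would lean on the corresponding lemma in \citep[Chapter I]{Liu95} for the compact case, checking that its proof uses only the product structure of $\nu^\N\times\mu$ and not compactness of the state space.
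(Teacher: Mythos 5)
Your proof of the first identity is correct and follows the same route the paper (via \citep[Theorem I.2.2]{Liu95}) takes: identify the $\omega$-fiber restriction of $\bigvee_{i=0}^{n-1}F^{-i}(\xi\times\eta)$ with $\bigvee_{k=0}^{n-1}(f^k_\omega)^{-1}\xi$, use the product structure to recognize the conditional measure on each fiber as $\mu$, and apply \rref{eq:alternativeCondEntropy}. The observation that the $\eta$-factor drops out once $\omega$ is fixed is exactly right, and your finiteness check is adequate.

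The gap is in your argument for the reverse inequality $h^{\sigma_0}_{\nu^\N\times\mu}(F) \le h_\mu(\mathcal{X}^+(\R^d,\nu))$. You propose to \emph{refine} an arbitrary finite partition $\zeta$ of $\Omega^\N\times\R^d$ to a finite product partition $\xi\times\eta$ and invoke monotonicity. But a finite partition of a product space need not admit any finite product refinement: already on $[0,1]^2$ the two-element partition cut along the anti-diagonal $\{x+y=1\}$ cannot be refined into finitely many rectangles. So the step ``refine $\zeta$ to $\xi\times\eta$'' does not go through. What is true, and what the argument actually needs, is that finite product partitions are dense in the Rokhlin metric: for every finite $\zeta$ and every $\eps>0$ there exist finite partitions $\xi$ of $\R^d$ and $\eta$ of $\Omega^\N$ with $H_{\nu^\N\times\mu}(\zeta\mid\xi\times\eta)<\eps$. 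Coupled with the standard inequality $h^{\sigma_0}_{\nu^\N\times\mu}(F,\zeta)\le h^{\sigma_0}_{\nu^\N\times\mu}(F,\xi\times\eta)+H_{\nu^\N\times\mu}(\zeta\mid\xi\times\eta\vee\sigma_0)$ and the first part of the theorem, this gives $h^{\sigma_0}_{\nu^\N\times\mu}(F,\zeta)\le h_\mu(\mathcal{X}^+(\R^d,\nu),\xi)+\eps$, and the reverse inequality follows on letting $\eps\to 0$ and taking suprema. So the mechanism is approximation, not refinement — you were right to flag this as the delicate point, but the device you reached for is the wrong one.
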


\begin{proof}
See \citep[Theorem I.2.2]{Liu95}.
\end{proof}

The following proposition from \citep{Liu95} justifies to transfer the invariant measure from $\Omega^{\N}\times\R^{d}$ to $\Omega^{\Z}\times\R^{d}$.

\begin{proposition} \label{prop:ExistenceOfMuStar}
For every invariant probability measure $\mu$ of $\mathcal{X}^+(\R^d,\nu)$ there exists a unique Borel probability measure $\mu^*$ on $\Omega^\Z \times \R^d$ such that $G\mu^* = \mu^*$ and $P\mu^* = \nu^\N \times \mu$.
\end{proposition}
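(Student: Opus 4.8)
The plan is to construct $\mu^*$ as a projective (inverse) limit, exploiting the fact that $G$ is an invertible extension of $F$ and that $\nu^\N\times\mu$ is $F$-invariant. First I would describe the natural projections: for each $m\ge 0$ let $\pi_m\colon\Omega^\Z\times\R^d\to\Omega^\N\times\R^d$ be $(\omega,x)\mapsto((f_{-m}(\omega),f_{-m+1}(\omega),\dots),x)$, so that $\pi_0=P$ and, crucially, $\pi_m = \pi_0\circ G^{m}$ read on the appropriate coordinates — more precisely one checks $F\circ\pi_{m+1}=\pi_m$ on the relevant subsystem, so the maps $F^{m}$ intertwine the $\pi_m$ with $\pi_0$. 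The measure we want should satisfy $\pi_m\mu^* = \nu^\N\times\mu$ for every $m$, because $F$-invariance of $\nu^\N\times\mu$ forces all these pushforwards to agree. This is the standard Rokhlin construction of the natural extension, adapted to the skew-product setting.

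Concretely, I would proceed as follows. Since $\Omega$ is a separable Banach space (stated in the excerpt), $\Omega^\Z\times\R^d$ is a Polish space and $\nu^\N\times\mu$ lives on a standard Borel space, so Kolmogorov-type consistency / inverse-limit theorems for measures apply. Define, for each $m$, a probability measure $\mu_m$ on $\Omega^\Z\times\R^d$ by pulling back $\nu^\N\times\mu$ along the identification that uses coordinates $\ge -m$ and fills in coordinates $<-m$ with independent $\nu$-distributed copies; equivalently $\mu_m$ is the image of $\nu^\N\times\nu^\N\times\mu$ under an obvious bijection. The $F$-invariance of $\nu^\N\times\mu$ is exactly what makes the family $\{\mu_m\}$ consistent: $\mu_{m+1}$ restricted to the $\sigma$-algebra generated by coordinates $\ge -m$ equals $\mu_m$ restricted to that same $\sigma$-algebra. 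By the Kolmogorov extension theorem (the coordinate spaces are standard Borel) there is a unique Borel probability $\mu^*$ on $\Omega^\Z\times\R^d$ whose restriction to the $\sigma$-algebra of coordinates $\ge -m$ agrees with $\mu_m$ for every $m$. Taking $m=0$ gives $P\mu^*=\nu^\N\times\mu$.

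It remains to verify $G\mu^*=\mu^*$. Because $G$ shifts the doubly-infinite sequence, $G$ maps the $\sigma$-algebra of coordinates $\ge -(m+1)$ onto that of coordinates $\ge -m$, and on these $\sigma$-algebras the compatibility $G_*\mu_{m+1}\big|_{\ge -m} = \mu_m\big|_{\ge -m}$ follows directly from the $F$-invariance built into the $\mu_m$. Since $\bigcup_m\mathcal B(\text{coords}\ge -m)$ generates $\mathcal B(\Omega^\Z)\times\mathcal B(\R^d)$, a monotone-class / $\pi$–$\lambda$ argument upgrades this to $G\mu^*=\mu^*$ on the full $\sigma$-algebra. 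Uniqueness is immediate: any $G$-invariant $\mu^*$ with $P\mu^*=\nu^\N\times\mu$ must, by $G$-invariance and the relation $\pi_m=P\circ G^{m}$, satisfy $\pi_m\mu^*=\nu^\N\times\mu$ for all $m$, hence agrees with the inverse limit on each coordinate $\sigma$-algebra and therefore everywhere.

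The main obstacle is bookkeeping rather than conceptual depth: one must set up the coordinate identifications carefully enough that the intertwining relations between $F$, $G$ and the projections $\pi_m$ are literally correct (the index shifts are easy to get off by one), and one must be slightly careful that measurability of the skew-product maps and separability of $\Omega$ are genuinely used so that the Kolmogorov/standard-Borel machinery applies on $\Omega^\Z\times\R^d$. Everything else is the routine natural-extension argument, and indeed the statement is classical — the reference \citep[Chapter I]{Liu95} contains essentially this construction, so in the paper itself it would be legitimate to simply cite it.
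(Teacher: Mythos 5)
Your high-level plan — realize $\mu^*$ as the Rokhlin natural extension of $(F,\nu^\N\times\mu)$ on the inverse limit, which after the usual identification \emph{is} $\Omega^\Z\times\R^d$, then invoke a Kolmogorov-type extension theorem on the standard Borel space $\Omega^\Z\times\R^d$ — is exactly the construction behind \citep[Proposition I.1.2]{Liu95}, which the paper simply cites. So the route is the right one. However, the concrete setup you give contains more than an off-by-one slip; as written the measure you build fails to be $G$-invariant.

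The problem is that your projections $\pi_m(\omega,x)=((f_{-m}(\omega),f_{-m+1}(\omega),\dots),x)$ shift only the $\Omega$-coordinates and leave the $\R^d$-coordinate untouched, and this breaks the intertwining you need. A direct check gives $F\bigl(\pi_{m+1}(\omega,x)\bigr)=\bigl((f_{-m}(\omega),f_{-m+1}(\omega),\dots),\,f_{-m-1}(\omega)\,x\bigr)$, which differs from $\pi_m(\omega,x)$ in the second slot; the correct bonding maps for the inverse limit are $P\circ G^{-m}$, and these do act on the $x$-coordinate, by $(f^m_{\tau^{-m}\omega})^{-1}$. Correspondingly, the finite-stage measure $\mu_m$ is \emph{not} the product $\nu^\N\times\nu^\N\times\mu\cong\nu^\Z\times\mu$ obtained by ``filling in coordinates $<-m$ with independent $\nu$-copies.'' If one takes that literally, the inverse limit is $\nu^\Z\times\mu$, and this is \emph{not} $G$-invariant unless $\mu$ is preserved by $\nu$-a.e.\ individual diffeomorphism: under $G$ the new $\R^d$-coordinate $f_0(\omega)x=f_{-1}(\tau\omega)x$ is visibly correlated with the new $\omega$-coordinate $\tau\omega$, so the image measure is no longer a product. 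What must actually happen is that under $\mu_m$ the $\R^d$-coordinate $x$ is correlated with $f_{-1}(\omega),\dots,f_{-m}(\omega)$; the clean statement is that $(P\circ G^{-m})_*\mu^*=\nu^\N\times\mu$ on the $\sigma$-algebra $\mathcal{G}_m$ generated by the coordinates $\geq -m$ together with $\R^d$, and it is precisely the $F$-invariance of $\nu^\N\times\mu$ that makes $\{\mu^*|_{\mathcal{G}_m}\}_m$ a consistent family. Once you replace your $\pi_m$ by $P\circ G^{-m}$ (note the sign: $G^{-m}$, not $G^{m}$, is what produces an increasing filtration $\mathcal{G}_0\subset\mathcal{G}_1\subset\cdots$ exhausting $\mathcal{B}(\Omega^\Z\times\R^d)$) and define $\mu_m$ accordingly, the Kolmogorov extension, the $\pi$--$\lambda$ upgrade of $G$-invariance, and the uniqueness argument all go through as you outlined.
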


\begin{proof}
See \citep[Proposition I.1.2]{Liu95}.
\end{proof}

The following theorem from \citep{Liu95} relates the entropy of $G$ on $\Omega^{\Z}\times\R^{d}$ with the entropy of $F$ on $\Omega^{\N}\times\R^{d}$. It will be useful for the proof of the estimation of the entropy from below (see Section\ref{sec:EstimateFromBelow}).

\begin{theorem} \label{thm:EqualityOfEntropies2}
For $\mathcal{X}^+(\R^d,\nu,\mu)$ it holds that
\begin{align*}
h_{\nu^\N \times \mu}^{\sigma_0}(F) = h_{\mu^*}^{\sigma^+}(G) = h_{\mu^*}^\sigma(G).
\end{align*}
\end{theorem}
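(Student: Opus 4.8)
The plan is to prove the two equalities $h_{\nu^\N\times\mu}^{\sigma_0}(F) = h_{\mu^*}^{\sigma^+}(G)$ and $h_{\mu^*}^{\sigma^+}(G) = h_{\mu^*}^{\sigma}(G)$ separately, in both cases by comparing the relevant conditional entropies through the natural factor maps and exploiting the invariance of the defining $\sigma$-algebras. For the first equality I would use the projection $P:\Omega^\Z\times\R^d\to\Omega^\N\times\R^d$, which by Proposition \ref{prop:ExistenceOfMuStar} satisfies $P\mu^*=\nu^\N\times\mu$ and intertwines $G$ with $F$, i.e. $P\circ G = F\circ P$. Note that $P^{-1}\sigma_0 = \sigma^+$, since $\sigma_0$ consists of sets $\Gamma\times\R^d$ with $\Gamma\in\mathcal{B}(\Omega^\N)$ and pulling back under $P$ exactly prepends the full coordinates indexed by negative integers. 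Hence for any finite partition $\xi\times\eta$ of $\R^d\times\Omega^\N$ (the type of partition appearing in Theorem \ref{thm:EqualityOfEntropies1}), the last assertion of Lemma~1 on measure-preserving transformations gives
\begin{align*}
H_{\mu^*}\!\left(\bigvee_{i=0}^{n-1} G^{-i} P^{-1}(\xi\times\eta)\,\bigg|\, P^{-1}\zeta_0^{\sigma_0}\right) = H_{\nu^\N\times\mu}\!\left(\bigvee_{i=0}^{n-1} F^{-i}(\xi\times\eta)\,\bigg|\,\zeta_0^{\sigma_0}\right),
\end{align*}
and since $P^{-1}\zeta_0^{\sigma_0} = \zeta_0^{\sigma^+}$ (the generating partition of $\sigma^+$), dividing by $n$ and letting $n\to\infty$ yields $h_{\mu^*}^{\sigma^+}(G, P^{-1}(\xi\times\eta)) = h_{\nu^\N\times\mu}^{\sigma_0}(F,\xi\times\eta)$. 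Taking suprema on both sides gives $h_{\mu^*}^{\sigma^+}(G)\ge h_{\nu^\N\times\mu}^{\sigma_0}(F)$; the reverse inequality (that partitions of the form $P^{-1}(\xi\times\eta)$ suffice to compute $h_{\mu^*}^{\sigma^+}(G)$) needs the observation that $\sigma^+$ together with the $\R^d$-coordinate generates the full $\sigma$-algebra on $\Omega^\Z\times\R^d$ modulo $\mu^*$, so a standard approximation/generator argument (as in \citep[Section 0.5]{Liu95}) allows one to restrict to such partitions.

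For the second equality $h_{\mu^*}^{\sigma^+}(G) = h_{\mu^*}^\sigma(G)$, the key point is that $\sigma^+$ is not $G$-invariant but $\bigvee_{k\ge0} G^{-k}\sigma^+$ or rather $\bigvee_{k\in\Z} G^k\sigma^+ = \sigma$ up to $\mu^*$-null sets, while $\sigma$ itself is $G$-invariant: $G^{-1}\sigma=\sigma$. Since $G^{-1}\sigma^+\supset\sigma^+$ (applying $G^{-1}$ shifts the "known" block of $\omega$-coordinates to include one more past coordinate), the partitions $\zeta_0^{\sigma^+}$ and its $G$-iterates form a filtration whose limit is $\zeta_0^\sigma$. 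Then I would invoke the formula in Definition \ref{def:Entropy2} together with properties iv) and v) of Lemma~1: for a finite partition $\xi$ of $\R^d$ (viewed on $\Omega^\Z\times\R^d$),
\begin{align*}
h^{\sigma^+}_{\mu^*}(G,\xi) = H_{\mu^*}\!\left(\xi\,\bigg|\,\bigvee_{k=1}^\infty G^{-k}\xi \vee \zeta_0^{\sigma^+}\right),
\end{align*}
and one checks $\bigvee_{k=1}^\infty G^{-k}\xi\vee\zeta_0^{\sigma^+} = \bigvee_{k=1}^\infty G^{-k}\xi\vee\zeta_0^{\sigma}$ up to $\mu^*$-null sets — because any past $\omega$-coordinate $f_{-j}(\omega)$ with $j\ge1$ is measurable with respect to $G^{-j}\xi$... no, more carefully, with respect to the $\sigma$-algebra generated by the $G^{-k}$-images together with $\sigma^+$, so the two conditioning $\sigma$-algebras agree. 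This gives the second equality directly from the equality of conditional entropies, and taking suprema over $\xi$ completes it. Alternatively, I would cite the abstract fact (Rokhlin–Sinai type, as in \citep{Liu95}) that the conditional entropy $h^{\mathcal A}_\mu(T)$ depends on $\mathcal A$ only through $\bigcap_n T^{-n}\mathcal A$ when $T^{-1}\mathcal A\subset\mathcal A$, and that $\bigcap_n G^{-n}\sigma^+ = \sigma$ up to null sets — but I would prefer to verify the finite-$\sigma$-algebra chain identity by hand since it is short.

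The main obstacle I anticipate is the reverse inequality in the first step: showing that partitions pulled back via $P$ are "rich enough" to compute the supremum defining $h_{\mu^*}^{\sigma^+}(G)$. This is where one genuinely uses that $\sigma^+$ together with the state-space coordinate generates the Borel $\sigma$-algebra on $\Omega^\Z\times\R^d$ up to $\mu^*$-null sets — equivalently, that the only "new" information in $\Omega^\Z\times\R^d$ relative to $\sigma^+$ is already accounted for once one conditions on $\sigma^+$ in the entropy functional. One must combine the generator theorem for conditional entropy with the monotone-limit properties i) and ii) of Lemma~1 to pass from finite partitions refining toward a generator to the full supremum; the care lies in checking the finiteness hypothesis $H_\mu(\xi_1|\eta)<\infty$ needed to invoke property ii), which holds because all partitions in play are finite. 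Everything else is bookkeeping with the explicit descriptions of $\sigma_0,\sigma^+,\sigma$ and the intertwining relations $P\circ G=F\circ P$ and $G^{-1}\sigma=\sigma$.
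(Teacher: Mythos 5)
The paper itself only cites \citep[Theorem I.2.3]{Liu95} and gives no proof, so I focus on whether your argument is sound. There is a genuine gap in your treatment of the second equality $h_{\mu^*}^{\sigma^+}(G) = h_{\mu^*}^\sigma(G)$, and it is exactly the part where the real work happens.

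First, a small but symptomatic error: $G^{-1}\sigma^+ \subsetneq \sigma^+$, not $\supset$. Indeed $G^{-1}$ acts on $\Omega^\Z$ as $\tau^{-1}$, so a set measurable with respect to coordinates $\{0,1,2,\dots\}$ pulls back to one measurable with respect to $\{1,2,3,\dots\}$. In particular Definition \ref{def:Entropy2} is not available for $\mathcal{A}=\sigma^+$, since it requires $G^{-1}\sigma^+ = \sigma^+$; you can use the remark only after showing the finiteness hypothesis is met, and even then the formula you write is the one from Definition \ref{def:Entropy2}, which is being misapplied. Also, the alternative route via $\bigcap_n G^{-n}\sigma^+$ is wrong on two counts: $\bigcap_n G^{-n}\sigma^+$ is the tail $\sigma$-algebra of the i.i.d.\ sequence, which is trivial mod $\mu^*$, certainly not $\sigma$. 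Second, and more seriously, the claimed identity
\begin{align*}
\bigvee_{k=1}^\infty G^{-k}\xi \vee \zeta_0^{\sigma^+} = \bigvee_{k=1}^\infty G^{-k}\xi \vee \zeta_0^{\sigma} \quad (\mu^*\text{-mod }0)
\end{align*}
is false. For a partition $\xi$ of $\R^d$, the set $G^{-k}\xi$ is measurable with respect to $x$ and the coordinates $f_0(\omega),\dots,f_{k-1}(\omega)$, all of which already lie in $\sigma^+$. So the left-hand $\sigma$-algebra contains no information about the past coordinates $f_{-1}(\omega), f_{-2}(\omega),\dots$, while the right-hand one does. These past coordinates are not $\mu^*$-a.s.\ functions of $(\omega^+,x)$ (the conditional law of $x$ given $\omega$ under $\mu^*$ is precisely the sample measure, which genuinely depends on the past), so the two conditioning $\sigma$-algebras differ on sets of positive $\mu^*$-measure. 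What is actually needed is a conditional-independence argument: under $\mu^*$ the past coordinates are conditionally independent of the forward orbit given $\sigma^+$ and $x$, which is where the i.i.d.\ structure of $\nu^\Z$ enters. Conditioning on $\sigma$ instead of $\sigma^+$ is \emph{a priori} a finer conditioning (hence smaller entropy), and showing equality rather than mere $\ge$ is the substantive step that your $\sigma$-algebra identity attempts to bypass but does not establish. Your first equality argument has the right skeleton (pulling back through $P$ with $P\circ G = F\circ P$ and $P^{-1}\sigma_0 = \sigma^+$), but the reverse inequality you flag as "the main obstacle" in fact hinges on the same conditional-independence phenomenon, not just a generator approximation; you should address both together.
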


\begin{proof}
See \citep[Theorem I.2.3]{Liu95}.
\end{proof}

\subsection{Multiplicative Ergodic Theorem and Lyapunov Exponents}

By Assumption \ref{ass1} in the previous section the multiplicative ergodic theorem yields the existence of linear subspaces with corresponding Lyapunov exponents, which play an extraordinary important role in the analysis of dynamical systems. The following theorem is \citep[Theorem I.3.2]{Liu95}.

\begin{theorem} \label{thm:met}
For the given system $\mathcal{X}^+(\R^d,\nu,\mu)$ there exists a Borel set $\Lambda_0 \subset \Omega^\N \times \R^d$ with $\nu^\N \times \mu (\Lambda_0) = 1$, $F \Lambda_0 \subset \Lambda_0$ such that:
\begin{enumerate}
\item For every $(\omega,x) \in \Lambda_0$ there exists a sequence of linear subspaces of $T_x\R^d$
\begin{align*}
\{0\} = V^{(0)}_{(\omega,x)} \subset V^{(1)}_{(\omega,x)} \subset \ldots \subset V^{(r(x))}_{(\omega,x)}
\end{align*}
and numbers (called Lyapunov exponents)
\begin{align*}
\lambda^{(1)}(x) < \lambda^{(2)}(x) < \ldots < \lambda^{(r(x))}(x)
\end{align*}
($\lambda^{(1)}(x)$ may be $-\infty$), which depend only on $x$, such that
\begin{align*}
\lim_{n\to +\infty} \frac{1}{n} \log \abs{D_x f^n_\omega \xi} = \lambda^{(i)}(x)
\end{align*}
for all $\xi \in V^{(i)}_{(\omega,x)} \!\setminus\! V^{(i-1)}_{(\omega,x)}$, $1 \leq i \leq r(x)$, and in addition
\begin{align*}
\lim_{n \to +\infty} \frac{1}{n} \log \abs{D_x f^n_\omega} &= \lambda^{(r(x))}(x)\\
\lim_{n \to +\infty} \frac{1}{n} \log \abs{\determinante (D_x f^n_\omega)} &= \sum_i \lambda^{(i)}(x) m_i(x)
\end{align*}
where $m_i(x) = \dimension(V^{(i)}_{(\omega,x)}) - \dimension(V^{(i-1)}_{(\omega,x)})$, which depends only on $x$ as well. Moreover, $r(x), \lambda^{(i)}(x)$ and $V^{(i)}_{(\omega,x)}$ depend measurably on $(\omega,x) \in \Lambda_0$ and
\begin{align*}
r(f_0(\omega)x) = r(x),\quad \lambda^{(i)}(f_0(\omega)x) = \lambda^{(i)}(x), \quad %m_i(f_0(\omega)x) = m_i(x)\\
D_x f_0(\omega) V^{(i)}_{(\omega,x)} = V^{(i)}_{F(\omega,x)},
\end{align*}
for each $(\omega,x) \in \Lambda_0$, $1 \leq i \leq r(x)$.

\item For each $(\omega,x) \in \Lambda_0$, we introduce
\begin{align}
 \rho^{(1)}(x) \leq  \rho^{(2)}(x) \leq \ldots \leq \rho^{(d)}(x)
\end{align}
to denote $\lambda^{(1)}(x), \dots, \lambda^{(1)}(x), \dots, \lambda^{(i)}(x), \dots, \lambda^{(i)}(x), \dots \lambda^{(r(x))}(x), \dots, \lambda^{(r(x))}(x)$ with $\lambda^{(i)}(x)$ being repeated $m_i(x)$ times. Now, for $(\omega,x) \in \Lambda_0$, if $\{\xi_1, \dots, \xi_d\}$ is a basis of $T_x \R^d$ which satisfies
\begin{align*}
\lim_{n \to +\infty} \frac{1}{n} \log \abs{D_x f^n_\omega \xi_i} = \rho^{(i)}(x)
\end{align*}
for every $1 \leq i \leq d$, then for every two non-empty disjoint subsets $P,Q \subset \{1, \dots, d\}$ we have
\begin{align*}
\lim_{n \to +\infty} \frac{1}{n} \log \gamma (D_x f^n_\omega E_P, D_x f^n_\omega E_Q) = 0,
\end{align*}
where $E_P$ and $E_Q$ denote the subspaces of $T_x \R^d$ spanned by the vectors $\{\xi_i\}_{i \in P}$ and $\{\xi_j\}_{j \in Q}$ respectively and $\gamma(\cdot, \cdot)$ denotes the angle between the two associated subspaces.
\end{enumerate}
\end{theorem}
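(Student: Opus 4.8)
The plan is to recognise this statement as the Oseledec multiplicative ergodic theorem applied to the derivative cocycle of $\mathcal{X}^+(\R^d,\nu,\mu)$ (so the core is essentially a citation, cf.\ \citep[Theorem~I.3.2]{Liu95}), together with two extra pieces: that $r$, the Lyapunov exponents and the multiplicities depend only on $x$, and the subexponential angle growth of part~ii). For the reduction, note that $T\R^d$ is trivial, so fix the canonical identification $T_y\R^d\cong\R^d$; then $\mathcal{A}(\omega,x):=D_xf_0(\omega)$ is a measurable map $\Omega^\N\times\R^d\to GL(d,\R)$ and the chain rule $D_xf^n_\omega=\mathcal{A}(F^{n-1}(\omega,x))\cdots\mathcal{A}(F(\omega,x))\,\mathcal{A}(\omega,x)$ exhibits $(D_xf^n_\omega)_{n\ge0}$ as a linear cocycle over the measure-preserving system $(\Omega^\N\times\R^d,\nu^\N\times\mu,F)$. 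Assumption~\ref{ass1} is precisely its integrability hypothesis $\log^+\norm{\mathcal{A}}=\log^+\abs{D_xf_0(\omega)}\in\mathcal{L}^1(\nu^\N\times\mu)$, and since $\nu^\N\times\mu$ is a probability measure, no compactness of the state space enters. The one-sided multiplicative ergodic theorem then provides a full-measure set $\Lambda_0$ with $F\Lambda_0\subset\Lambda_0$ carrying the increasing measurable filtration $\{0\}=V^{(0)}_{(\omega,x)}\subset\cdots\subset V^{(r)}_{(\omega,x)}=\R^d$ with exponents $\lambda^{(1)}<\cdots<\lambda^{(r)}$ ($\lambda^{(1)}$ possibly $-\infty$, since $\log^+\norm{\mathcal{A}^{-1}}$ is not assumed integrable in this theorem), the limits $\frac1n\log\abs{D_xf^n_\omega\xi}\to\lambda^{(i)}$ for $\xi\in V^{(i)}_{(\omega,x)}\setminus V^{(i-1)}_{(\omega,x)}$ and $\frac1n\log\abs{D_xf^n_\omega}\to\lambda^{(r)}$, joint measurability, and the equivariances $r\circ F=r$, $\lambda^{(i)}\circ F=\lambda^{(i)}$, $D_xf_0(\omega)V^{(i)}_{(\omega,x)}=V^{(i)}_{F(\omega,x)}$. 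The determinant limit is part of the same statement, since $\abs{\determinante D_xf^n_\omega}$ is the product of the singular values of $D_xf^n_\omega$, whose logarithms divided by $n$ converge to $\rho^{(1)}(x),\dots,\rho^{(d)}(x)$; hence $\frac1n\log\abs{\determinante D_xf^n_\omega}\to\sum_j\rho^{(j)}(x)=\sum_i m_i(x)\lambda^{(i)}(x)$, with value $-\infty$ by convention when $\lambda^{(1)}=-\infty$.

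\emph{Dependence on $x$ only.} The functions $r$, $\lambda^{(i)}$, $m_i$ just obtained are $F$-invariant, hence measurable for the $\sigma$-algebra of $F$-invariant sets. For the skew product of an i.i.d.\ sequence over the one-sided shift that $\sigma$-algebra agrees, modulo $\nu^\N\times\mu$-null sets, with $\sigma_0$; equivalently, every $F$-invariant function coincides $\nu^\N\times\mu$-a.e.\ with a function of $x$ alone. This is the standard argument of \citep{Kifer86}: one writes an invariant $g$ as $g=g\circ F^n$ and uses that $\tau^n\omega$ is independent of $(f_0(\omega),\dots,f_{n-1}(\omega))$ together with the invariance of $\mu$ (Definition~\ref{def:invariantMeasure}). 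Thus $r$, $\lambda^{(i)}$, $m_i$ have Borel representatives on $\R^d$ depending on $x$ only (while $V^{(i)}_{(\omega,x)}$ genuinely depends on $\omega$); shrinking $\Lambda_0$ if necessary, we use these representatives throughout, whereupon the equivariances above become the stated identities $r(f_0(\omega)x)=r(x)$ and $\lambda^{(i)}(f_0(\omega)x)=\lambda^{(i)}(x)$.

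\emph{Part~ii).} Fix $(\omega,x)\in\Lambda_0$, a basis $\{\xi_1,\dots,\xi_d\}$ of $T_x\R^d$ with $\frac1n\log\abs{D_xf^n_\omega\xi_i}\to\rho^{(i)}(x)$ for all $i$, and disjoint non-empty $P,Q\subset\{1,\dots,d\}$. Applying recursively the elementary identity that the volume of a parallelepiped equals the product of the volumes of the sub-parallelepipeds spanned by a complementary pair of index blocks times the product of the sines of the principal angles between the two spanned subspaces, one obtains $\vol(D_xf^n_\omega\xi_1,\dots,D_xf^n_\omega\xi_d)=t_n\prod_{i=1}^d\abs{D_xf^n_\omega\xi_i}$, where $t_n\in(0,1]$ is a product of sines of angles having as one factor $s_n:=\prod_j\sin\theta^{(n)}_j$, the $\theta^{(n)}_j$ being the principal angles between $D_xf^n_\omega E_P$ and $D_xf^n_\omega E_Q$; in particular $t_n\le s_n\le1$ and $\sin\gamma(D_xf^n_\omega E_P,D_xf^n_\omega E_Q)\ge s_n$. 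On the other hand this volume equals $\abs{\determinante D_xf^n_\omega}$ times the non-zero, $n$-independent Jacobian of the change of basis, so part~i) gives $\frac1n\log\vol(D_xf^n_\omega\xi_1,\dots,D_xf^n_\omega\xi_d)\to\sum_i\rho^{(i)}(x)$, while $\frac1n\sum_{i=1}^d\log\abs{D_xf^n_\omega\xi_i}\to\sum_i\rho^{(i)}(x)$ by hypothesis. Hence $\frac1n\log t_n\to0$, and since $\frac1n\log t_n\le\frac1n\log s_n\le0$ also $\frac1n\log s_n\to0$; as the angle lies in $(0,\pi/2]$ this is exactly $\frac1n\log\gamma(D_xf^n_\omega E_P,D_xf^n_\omega E_Q)\to0$. (When $\lambda^{(1)}(x)=-\infty$ both sums above are $-\infty$ and the cancellation requires a short separate argument, obtained by truncation, e.g.\ applying the same computation after restricting to a complement of the slowest subspace.)

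\emph{Expected main obstacle.} Once the reduction to the multiplicative ergodic theorem is in place, the genuinely delicate point is the passage from $F$-invariant to $x$-measurable data, which is exactly where the independence of the maps $f_i$ is essential; by comparison, the bookkeeping needed to carry a possibly infinite bottom exponent through the determinant and singular-value statements is minor. I would expect the former step to take up most of a detailed write-up.
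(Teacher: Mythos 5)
Your proposal is correct and follows essentially the same route the paper takes: the paper proves Theorem~\ref{thm:met} by citing \citep[Theorem~I.3.2]{Liu95}, which is exactly the application of the one-sided Oseledec multiplicative ergodic theorem to the derivative cocycle $\mathcal{A}(\omega,x)=D_xf_0(\omega)$ over $(\Omega^\N\times\R^d,\nu^\N\times\mu,F)$, using Assumption~\ref{ass1} as the integrability hypothesis and \citep[Corollary~I.1.1]{Liu95} to reduce $F$-invariant quantities to functions of $x$ alone. One small slip: the $\sigma$-algebra of $F$-invariant sets is contained $\nu^\N\times\mu$-mod~$0$ in $\{\Omega^\N\times B: B\in\mathcal{B}(\R^d)\}$, not in $\sigma_0$, which in the paper's notation is the $\sigma$-algebra of sets of the form $\Gamma\times\R^d$ (depending on $\omega$ only); your follow-up sentence states the correct fact, so this is purely a notational mix-up.
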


For more details on the multiplicative ergodic theorem for random dynamical systems and Lyapunov exponents see for example \citep{Arnold98} or \citep[Section I.3]{Liu95}. The angle between to linear subspaces $E$ and $E'$ of a tangent space $T_x\R^d$ for some $x \in \R^d$ is defined by
\begin{align*}
\gamma(E,E') := \inf\left\{\cos^{-1}\left(\sprod{\xi,\xi'}\right) : \xi \in E, \xi' \in E', \abs{\xi} = \abs{\xi'} = 1\right\},
\end{align*}
where $\sprod{\cdot,\cdot}$ denotes the Euclidean scalar product on $T_x\R^d$.

\subsection{Pesin Formula}

Now we are able to formulate the main theorem of this article

\begin{theorem} \label{thm:Pesin}
Let $\mathcal{X}(\R^d, \nu)$ be a random dynamical system which has an invariant measure $\mu$ and satisfying Assumptions \ref{ass1} - \ref{ass3}. Further assume that the invariant measure $\mu$ is absolutely continuous with respect to the Lebesgue measure on $\R^d$ then we have
\begin{align*}
h_\mu(\mathcal{X}(\R^d, \nu)) = \int \sum_i \lambda^{(i)}(x)^+ m_i(x) \dx \mu(x).
\end{align*}
\end{theorem}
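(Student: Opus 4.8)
The plan is to prove the two inequalities $h_\mu(\mathcal{X}(\R^d,\nu)) \le \int \sum_i \lambda^{(i)}(x)^+ m_i(x)\,\dx\mu(x)$ and $h_\mu(\mathcal{X}(\R^d,\nu)) \ge \int \sum_i \lambda^{(i)}(x)^+ m_i(x)\,\dx\mu(x)$ separately, following the scheme laid out in the introduction. By Theorem \ref{thm:EqualityOfEntropies1} we may work with the conditional entropy $h^{\sigma_0}_{\nu^\N\times\mu}(F)$ of the skew product instead of the original random entropy, and by Theorem \ref{thm:EqualityOfEntropies2} we may pass freely to the two-sided system $(\Omega^\Z\times\R^d, G, \mu^*)$ when convenient; these are the reductions that let us treat the random system "as a deterministic one" on the product space.

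For the \emph{upper bound} (the Ruelle inequality), I would invoke the argument of \citep{bargen10}: cover $\R^d$ by a countable partition of small-diameter pieces and estimate, for $\mu$-a.e.\ $x$, the number of elements of $\bigvee_{k=0}^{n-1}(f^k_\omega)^{-1}\xi$ that meet a given piece by the local volume expansion of $f^n_\omega$, which is governed by $\abs{\determinante D_xf^n_\omega}$ on the set where the derivative is expanding and is bounded in terms of $\sup_{\xi\in B(x,1)}\log^+\abs{D_\xi f^n_\omega}$ near the boundary. Passing to the limit via the multiplicative ergodic theorem (Theorem \ref{thm:met}, in particular the formula for $\frac1n\log\abs{\determinante D_xf^n_\omega}$) and Kingman's subadditive ergodic theorem produces the bound $\sum_i\lambda^{(i)}(x)^+m_i(x)$. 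The only places where non-compactness intervenes are the two estimates the paper flags, and there Assumption \ref{ass3} (uniform integrability of $\sup_{\xi\in B(x,1)}\log^+\abs{D_\xi f^n_\omega}$) replaces the uniform bounds available on a compact manifold; otherwise the proof of \citep{bargen10} applies verbatim.

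For the \emph{lower bound} I would follow \citep[Chapter IV]{Liu95}. The key object is the measurable partition $\eta$ subordinate to the family of local stable manifolds $W^s$ constructed in Section \ref{sec:StableManifold}: one shows that $\eta$ refines to a partition whose atoms are open subsets of the stable manifolds, that $F^{-1}\eta \le \eta$ modulo $\sigma_0$, and that $H_{\nu^\N\times\mu}(\eta\mid F\eta) < \infty$. Then $h^{\sigma_0}(F) \ge h^{\sigma_0}(F,\eta) = H(\eta\mid F\eta\vee\sigma_0)$, and one computes the right-hand side using the absolute continuity of the conditional measures of $\nu^\N\times\mu$ on stable-manifold atoms with respect to induced Lebesgue measure — this is the Biskamp absolute-continuity theorem cited in Sections \ref{sec:TheoremAbsoluteContinuity}--\ref{sec:AbsContCondMeasures}, and it is exactly where absolute continuity of $\mu$ with respect to Lebesgue measure enters. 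The Jacobian of $F^{-1}$ along the stable manifolds, integrated against these conditional measures, reproduces the sum of the \emph{negative} Lyapunov exponents with multiplicity; applying the same argument to the inverse system (valid because the relevant assumptions are symmetric in $f_0$ and $f_0^{-1}$) gives the sum of the positive exponents, i.e.\ $\int\sum_i\lambda^{(i)}(x)^+ m_i(x)\,\dx\mu(x)$.

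The main obstacle is the construction and control of the local stable manifolds and the subordinate partition on the non-compact space $\R^d$: on a compact manifold the Pesin sets carry genuinely uniform bounds, whereas here one only has the Pesin-set estimates coming from Oseledets regularity together with the integrability Assumptions \ref{ass1}--\ref{ass2b}, and one must check that these suffice to (a) build stable manifolds of uniformly positive size on each Pesin block (the analogue of Lemma \ref{lem:ExistenceOfr}, using Assumption \ref{ass2}), (b) guarantee $H(\eta\mid F\eta)<\infty$ despite the unbounded geometry, and (c) make the absolute-continuity argument go through for conditional measures that are no longer defined on a compact leaf space. Granting the stable-manifold theory of Section \ref{sec:StableManifold} and the absolute-continuity result of \citep{Biskamp11b}, the entropy computation itself is then a routine adaptation of \citep{Liu95}.
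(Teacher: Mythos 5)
Your upper-bound discussion matches the paper: it is the Ruelle-inequality argument of \citep{bargen10} passed through the skew-product reduction, with Assumption \ref{ass3} supplying the two integrability estimates that replace the uniform bounds lost on a non-compact state space. Your lower-bound outline is also right as far as it goes: build a measurable partition $\eta$ subordinate to the local stable manifolds, use $\mu\ll\lambda$ together with the absolute-continuity theorem of \citep{Biskamp11b} to control the conditional measures on stable leaves, and compute $\frac{1}{n}H_{\nu^\N\times\mu}(\eta\mid F^{-n}\eta\vee\sigma_0)$.

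However, the last step of your lower-bound argument is wrong. You say that the Jacobian along the stable leaves reproduces the sum of the \emph{negative} exponents and that one must then apply the same argument to the inverse system, justified by a claimed symmetry of the assumptions in $f_0$ and $f_0^{-1}$. Neither claim is correct. In the Liu--Qian computation that the paper uses, the relevant Jacobian is the ratio
\begin{align*}
Y_n(\omega,x) = \frac{\abs{\determinante\left(D_x f^n_\omega|_{E_0(\omega,x)}\right)}}{\abs{\determinante\left(D_x f^n_\omega\right)}},
\end{align*}
and by the multiplicative ergodic theorem together with Assumption \ref{ass2b} one has $-\frac{1}{n}\int\log Y_n\,\dx\nu^\N\times\mu = \int\sum_i\lambda^{(i)}(x)^+m_i(x)\,\dx\mu$: the sum of positive exponents appears directly, since the numerator contributes $\sum_i\lambda^{(i)-}m_i$, the denominator contributes $\sum_i\lambda^{(i)}m_i$, and their difference is exactly $\sum_i\lambda^{(i)+}m_i$. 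No passage to the inverse system is needed, and indeed it would be problematic: $\mu$ is invariant for $\mathcal{X}^+(\R^d,\nu)$ in the sense of Definition \ref{def:invariantMeasure}, which is not the same as being invariant for the system generated by $\nu\circ(\cdot)^{-1}$, and Assumptions \ref{ass1}--\ref{ass3} are not symmetric under $f_0\mapsto f_0^{-1}$. The correct mechanism is the single ratio of Jacobians, combined with the factorization $W_n = X_n Y_n/Z_n$ and the verification (using Proposition \ref{prop:AbsCondMeasures} and the structure of $\eta$) that $\log X_n$ integrates to zero and $\log Z_n$ integrates to something non-positive.
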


\begin{proof}
The proof of the theorem can be found in Section \ref{sec:proofPesin}. For the proof we need some preparation, which will be done in the following sections.
\end{proof}

\section{Local and Global Stable Manifolds} \label{sec:StableManifold}

In this section we will mainly follow the book of Liu and Qian \citep[Chapter III]{Liu95}. In general proofs are only given, if there is a need to change arguments due to the non-compactness of $\R^d$ as the state space of the random dynamical system. Otherwise we will state the reference for the proof.

\subsection{Lyapunov Metric and Pesin Sets} \label{sec:lyapunovmetric}

Let us define for some interval $[a,b]$, $a < b \leq 0$, of the real line the set
\begin{align*}
\Lambda_{a,b} := \left\{ (\omega,x) \in \Lambda_0 : \lambda^i(x) \notin [a,b] \text{ for all } i\in 1,\dots,r(x) \right\},
\end{align*}
where $\Lambda_{0}$ was defined in of Theorem \ref{thm:met}. Because of $F\Lambda_0 \subset \Lambda_0$ and the invariance of the Lyapunov exponents we have $F\Lambda_{a,b} \subset \Lambda_{a,b}$. For $(\omega,x) \in \Lambda_{a,b}$ and $n\geq 1$ define the following linear subspaces of $T_{x}\R^{d}$ and $T_{f^{n}_{\omega}x}\R^{d}$ respectively by
\begin{align*}
\sTS{0} &:= \bigcup_{\lambda^{(i)}(x) < a} V^{(i)}_{(\omega,x)},\qquad & \uTS{0} &:= \sTS{0}^\bot,\\
\sTS{n} &:= D_xf^n_\omega \sTS{0}, \qquad & \uTS{n} &:= D_xf^n_\omega \uTS{0}.
\end{align*}
For $n,l \geq 1$ let us denote the iterated functions by
\begin{align*}
f^0_n(\omega) := \id, \qquad f^l_n(\omega) = f_{n+l-1}(\omega) \circ \dots \circ f_n(\omega).
\end{align*}
and we will denote the derivative of $f^l_n(\omega)$ at $f^n_\omega x$ by $T^l_n(\omega,x) := D_{f^n_\omega x}f^l_n(\omega)$ and its restriction to $\sTS{n}$ and $\uTS{n}$ respectively by
\begin{align*}
S^l_n(\omega,x) := T^l_n(\omega,x)|_{\sTS{n}}, \qquad U^l_n(\omega,x) := T^l_n(\omega,x)|_{\uTS{n}}.
\end{align*}

Let us now fix $k\geq 1$ and $0 < \eps \leq \min\{1,(b-a)/(200d)\}$ and let us assume that the set
\begin{align*}
\Lambda_{a,b,k} := \{(\omega,x) \in \Lambda_{a,b} : \dimension \sTS{0} = k\}
\end{align*}
is non-empty. Then we have the following lemma from \citep[Lemma III.1.1]{Liu95}.

\begin{lemma} \label{lem:ExistenceOfl}
There exists a measurable function $l: \Lambda_{a,b,k} \times \N \to (0,+\infty)$ such that for each $(\omega,x) \in \Lambda_{a,b,k}$ and $n,l \geq 1$ we have
\begin{enumerate}
\item $\abs{S^l_n(\omega,x)\xi} \leq l(\omega,x,n) e^{(a+\eps)l}\abs{\xi}$, for all $\xi \in \sTS{n}$;
\item $\abs{U^l_n(\omega,x)\eta} \geq l(\omega,x,n)^{-1} e^{(b-\eps)l}\abs{\eta}$, for all $\eta \in \uTS{n}$;
\item $\gamma(\sTS{n+l},\uTS{n+l}) \geq l(\omega,x,n)^{-1}e^{-\eps l}$;
\item $l(\omega,x,n+l) \leq l(\omega,x,n) e^{\eps l}$,
\end{enumerate}
where $\gamma(\cdot,\cdot)$ again denotes the angle between two linear subspaces.
\end{lemma}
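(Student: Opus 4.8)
The plan is to construct $l$ in two stages. First I would build a ``regularity function'' $\hat l(\omega,x,n)$ recording the best constant for which i)--iii) can hold at time $n$, extracted pointwise from the multiplicative ergodic theorem; then I would pass to a tempered majorant of it, which still yields i)--iii) and in addition satisfies the sub-exponential growth condition iv).

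For the first stage, fix $(\omega,x)\in\Lambda_{a,b,k}$ and $n\ge1$. Since $E_0(\omega,x)=\bigcup_{\lambda^{(i)}(x)<a}V^{(i)}_{(\omega,x)}$ has top Lyapunov exponent strictly below $a$, and, by the spectral gap $\lambda^{(i)}(x)\notin[a,b]$, every nonzero vector of $H_0(\omega,x)=E_0(\omega,x)^\perp$ (not lying in $E_0(\omega,x)$) has forward Lyapunov exponent strictly above $b$, the vector-wise limits of Theorem \ref{thm:met} — upgraded to operator norms and to the angle by finite-dimensionality together with the angle statement ii) — give that
\begin{align*}
\hat l(\omega,x,n):=\max\Bigl\{1,\ \sup_{m\ge1}\norm{S^m_n(\omega,x)}e^{-(a+\eps)m},\ \sup_{m\ge1}\norm{U^m_n(\omega,x)^{-1}}e^{(b-\eps)m},\ \sup_{m\ge1}\gamma\bigl(E_{n+m}(\omega,x),H_{n+m}(\omega,x)\bigr)^{-1}e^{-\eps m}\Bigr\}
\end{align*}
is finite. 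It is measurable in $(\omega,x)$ as a countable supremum of measurable functions, and i), ii) and iii) hold for it verbatim by construction — for ii) one uses $\abs{U^l_n(\omega,x)\eta}\ge\norm{U^l_n(\omega,x)^{-1}}^{-1}\abs{\eta}$.

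The sequence $n\mapsto\hat l(\omega,x,n)$ need not satisfy iv): a single map $f_n(\omega)$ with a large derivative, or whose inverse has a large derivative, can make $\hat l(\omega,x,n+1)$ exceed $e^{\eps}\hat l(\omega,x,n)$, since in the non-compact setting these one-step quantities are only integrable, not uniformly bounded. I would repair this by the standard tempering argument. Combining Theorem \ref{thm:met} (the sub-exponential regularity of the Oseledets data, uniform in direction by finite-dimensionality and ii)) with Assumption \ref{ass1}, which gives $\frac1n\log^+\abs{D_{f^n_\omega x}f_n(\omega)}\to0$ almost surely and hence controls the short windows, one shows $\frac1n\log\hat l(\omega,x,n)\to0$ for $\nu^\N\times\mu$-a.e.\ $(\omega,x)$. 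Then I would set
\begin{align*}
l(\omega,x,n):=\sup_{m\ge0}\hat l(\omega,x,n+m)\,e^{-\eps m};
\end{align*}
temperedness makes this finite a.e., it is measurable, it dominates $\hat l(\omega,x,n)$ so that i)--iii) persist, and reindexing the supremum gives $l(\omega,x,n+1)=e^{\eps}\sup_{m\ge1}\hat l(\omega,x,n+m)e^{-\eps m}\le e^{\eps}l(\omega,x,n)$, whence iv) by iteration. Finally I would discard a null set and shrink $\Lambda_{a,b,k}$ to a full-measure $F$-invariant Borel subset on which all of the above holds at every point — harmless, as is customary for Pesin sets — so that $l$ has properties i)--iv) for every $(\omega,x)$.

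The main obstacle is the sub-exponential growth of $\hat l$ along orbits: turning the bare vector-wise conclusions of the multiplicative ergodic theorem into direction-uniform exponential bounds on $\norm{S^m_n}$, $\norm{(U^m_n)^{-1}}$ and on $\gamma(E_{n+m},H_{n+m})$, and showing these bounds deteriorate only sub-exponentially with $n$. This is where the spectral gap $\lambda^{(i)}(x)\notin[a,b]$, the angle statement ii) of Theorem \ref{thm:met}, finite-dimensionality, and the integrability in Assumption \ref{ass1} all enter; it is carried out in detail in \citep[Lemma III.1.1]{Liu95}, the only change in the non-compact case being to invoke Assumption \ref{ass1} in place of the compactness used there.
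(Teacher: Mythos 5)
Your two-stage construction — first a regularity constant $\hat l$ read off from the multiplicative ergodic theorem, then a tempered majorant $l(\omega,x,n)=\sup_{m\ge0}\hat l(\omega,x,n+m)e^{-\eps m}$ — is exactly the structure of Liu--Qian's proof of Lemma III.1.1, which the paper cites verbatim; your definition of $\hat l$, the verification that i)--iii) persist for the majorant, and the reindexing that yields iv) are all sound.

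Where you deviate is in thinking that Assumption \ref{ass1} and Birkhoff's theorem are needed to obtain the temperedness $\tfrac1n\log\hat l(\omega,x,n)\to0$, and that one must therefore shrink $\Lambda_{a,b,k}$ to a full-measure $F$-invariant subset. That step is unnecessary and signals a small misconception. The temperedness of $\hat l$ is a \emph{pointwise} consequence of the Oseledets regularity built into Theorem \ref{thm:met} on $\Lambda_0$: because of the equivariance $E_n(\omega,x)=E_0(F^n(\omega,x))$ one has $\hat l(\omega,x,n)=\hat l(F^n(\omega,x),0)$, and the finer Oseledets decomposition of $E_0$ and $H_0$ together with the angle statement ii) of Theorem \ref{thm:met} yield a bound $\hat l(\omega,x,n)\le C(\omega,x)e^{c\eps n}$ at \emph{every} $(\omega,x)\in\Lambda_0$, with no almost-sure qualifier and no ergodic theorem. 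Lemma III.1.1 in \citep{Liu95} uses no compactness whatsoever — compactness enters only later, precisely in the counterpart of Lemma \ref{lem:ExistenceOfr}, which is why the paper replaces that one with Assumption \ref{ass2} but leaves Lemma \ref{lem:ExistenceOfl} untouched. Your version still produces a usable statement (a smaller $F$-invariant Pesin set of full measure is harmless downstream), but the detour through Assumption \ref{ass1} and the ``shrink a null set'' step should be dropped: the MET already gives the conclusion for every $(\omega,x)\in\Lambda_{a,b,k}$ as stated.
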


\begin{proof}
See \citep[Proof of Lemma III.1.1]{Liu95}.
\end{proof}

Let us fix a number $l' \geq 1$ such that the set
\begin{align*}
\Lambda^{l'}_{a,b,k,\eps}:= \left\{(\omega,x) \in \Lambda_{a,b,k} : l(\omega,x,0) \leq l'\right\}
\end{align*}
is non-empty. These sets where we have uniform bounds on the derivative by Lemma \ref{lem:ExistenceOfl} are often called {\it Pesin sets}. Since on these sets the function $l$ is uniformly bounded by definition we can show continuity of the subspaces $\sTS{0}$ and $\uTS{0}$ there, which is \citep[Lemma III.1.2]{Liu95}.

\begin{lemma} \label{lem:ContinuousDependeceOfStableSpaces}
The linear subspaces $\sTS{0}$ and $\uTS{0}$ depend continuously on $(\omega,x) \in \Lambda^{l'}_{a,b,k,\eps}$.
\end{lemma}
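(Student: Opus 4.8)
The statement to prove is Lemma \ref{lem:ContinuousDependeceOfStableSpaces}: on the Pesin set $\Lambda^{l'}_{a,b,k,\eps}$, the subspaces $\sTS{0}$ and $\uTS{0}$ depend continuously on $(\omega,x)$. Since $\uTS{0} = \sTS{0}^\bot$, it suffices to prove continuity of $E_0(\omega,x)$, which is a $k$-dimensional subspace of $\R^d$, i.e.\ a point in the Grassmannian $\mathrm{Gr}(k,d)$ with its usual (compact, metrizable) topology. The plan is to work with a convergent sequence $(\omega_j, x_j) \to (\omega_\infty, x_\infty)$ inside $\Lambda^{l'}_{a,b,k,\eps}$ and show $E_0(\omega_j,x_j) \to E_0(\omega_\infty,x_\infty)$. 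Passing to a subsequence, by compactness of the Grassmannian we may assume $E_0(\omega_j,x_j) \to E$ for some $k$-dimensional subspace $E$; it then remains to identify $E = E_0(\omega_\infty,x_\infty)$.

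\textbf{Identification of the limit via the uniform bounds.} The key tool is Lemma \ref{lem:ExistenceOfl}: on $\Lambda^{l'}_{a,b,k,\eps}$ we have $l(\omega,x,0) \le l'$, so items i)--iii) hold with the \emph{uniform} constant $l'$ in place of $l(\omega,x,0)$. First I would establish that $E_0(\omega_\infty,x_\infty)$ is characterized among $k$-dimensional subspaces by a contraction/expansion dichotomy that survives passage to the limit. Concretely, for $(\omega,x) \in \Lambda^{l'}_{a,b,k,\eps}$ and any unit $\xi \in E_0(\omega,x)$, item i) gives $\abs{D_x f^l_\omega \xi} \le l' e^{(a+\eps)l}$, while for unit $\eta \in H_0(\omega,x)$, item ii) together with the angle bound iii) gives a lower bound $\abs{D_x f^l_\omega \eta} \ge c(l') e^{(b-\eps)l}$ (the angle bound is what lets one pass from the restriction $U^l_0$ on $H_0$ to the full differential applied to vectors with a nontrivial $H_0$-component; here one uses $a + \eps < b - \eps$, which holds because $\eps \le (b-a)/(200d)$). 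The maps $x \mapsto D_x f^l_\omega$ and $\omega \mapsto D_x f^l_\omega$ are continuous for each fixed $l$ — this is exactly what the $C^2$-uniform-on-compacts topology on $\Omega$ buys us, combined with the joint continuity of $(\omega, x) \mapsto f^n_\omega(x)$ and its derivative. Hence, fixing $l$ and letting $j \to \infty$, any unit vector $\xi \in E = \lim E_0(\omega_j,x_j)$ inherits $\abs{D_{x_\infty} f^l_{\omega_\infty} \xi} \le l' e^{(a+\eps)l}$ for all $l \ge 1$, so $\limsup_{l} \frac1l \log \abs{D_{x_\infty} f^l_{\omega_\infty}\xi} \le a + \eps < b - \eps$. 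Since $(\omega_\infty,x_\infty) \in \Lambda_0$, the Oseledets splitting at $x_\infty$ forces $\xi \in E_0(\omega_\infty,x_\infty)$: any component of $\xi$ outside $\sTS[\infty]{0}$ would lie in a sum of Oseledets spaces with exponents $> b$ (note $\Lambda_{a,b}$ excludes exponents in $[a,b]$), contradicting the subexponential-rate bound just obtained, again via an angle/non-degeneracy argument at $x_\infty$. Therefore $E \subset E_0(\omega_\infty, x_\infty)$, and since both are $k$-dimensional, $E = E_0(\omega_\infty,x_\infty)$. As the limit is the same along every subsequence, the full sequence converges, giving continuity; continuity of $\uTS{0} = \sTS{0}^\bot$ follows since orthogonal complement is continuous on the Grassmannian.

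\textbf{Main obstacle.} The delicate point is the passage from the restricted operators $S^l_0, U^l_0$ (which only control $D_x f^l_\omega$ on the pure subspaces $E_0$ and $H_0$) to a bound on $D_x f^l_\omega$ applied to an \emph{arbitrary} unit vector at the limit point $x_\infty$, which is needed to exclude components outside $\sTS[\infty]{0}$. This requires the angle estimate iii) to keep $\sTS{n}$ and $\uTS{n}$ uniformly transverse along the sequence, so that the $E_0$- and $H_0$-components of a vector cannot conspire to cancel the expansion; quantitatively one must track how the constant degrades with $l$ and confirm it still grows slower than $e^{(b-\eps)l}$ versus $e^{(a+\eps)l}$, which is where $\eps \le (b-a)/(200d)$ enters. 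A secondary, more bookkeeping-type obstacle is verifying that $(\omega,x) \mapsto D_x f^l_\omega$ really is continuous in the chosen topology on $\Omega^\N \times \R^d$ — this is routine given the definition of the topology on $\Omega$ (uniform convergence on compacts up to order $2$) but must be invoked explicitly, since it is precisely the place where non-compactness of $\R^d$ could have caused trouble and does not, because $l$ is held fixed while taking the limit. For full details one refers to \citep[Proof of Lemma III.1.2]{Liu95}, the argument there carrying over verbatim once the uniform bounds of Lemma \ref{lem:ExistenceOfl} on the Pesin set are available.
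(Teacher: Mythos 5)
Your proposal matches the paper's proof essentially point for point: convergent sequence in the Pesin set, pass to a subsequence using compactness of the Grassmannian, apply the uniform contraction bound from Lemma \ref{lem:ExistenceOfl}~i) with constant $l'$ in place of $l(\omega,x,0)$, use the $C^2$-uniform-on-compacts topology on $\Omega$ to pass the bound to the limit point, and conclude $E \subset E_0(\omega_\infty,x_\infty)$ hence equality by dimension count. Your additional elaboration of the final identification step (Oseledets filtration / angle transversality, where the paper writes only ``Lemma \ref{lem:ExistenceOfl} implies'') is correct and fills in exactly what the paper leaves implicit.
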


\begin{proof}
Although this is \citep[Lemma III.1.2]{Liu95} we will say a few words concerning the topology on $\Omega^\N$. As mentioned in Section \ref{sec:rds} the topology on $\Omega$ will be the one induced by uniform convergence on compact sets for all derivatives up to order 2 (see \citep[Chapter 4]{Kunita90}). Thus on $\Omega^\N$ we will use the usual topology of uniform convergence on finitely many elements. The space of all $k$-dimensional subspaces of $T_x\R^d \cong \R^d$ will be equipped with the Grasmannian metric, by which this space is compact.

Let $(\omega_n,x_n) \in \Lambda^{l'}_{a,b,k,\eps}$ be a sequence converging to $(\omega,x) \in \Lambda^{l'}_{a,b,k,\eps}$. By compactness of the Grassmanian there exists a subsequence of $\{(\omega_n,x_n)\}_n$ (denoted by the same symbols) such that $E_0(\omega_n,x_n)$ converges to some linear subspace $E$. Clearly $E$ is a subspace of $T_x\R^d$. For each $\zeta \in E$ there is a sequence $\xi_n \in E_0(\omega_n,x_n)$ such that $\abs{\zeta -\xi_n} \to 0$. Because for $n \in \N$ we have by Lemma \ref{lem:ExistenceOfl} that
\begin{align*}
\abs{T^l_0(\omega_n,x_n)\xi_n} = \abs{S^l_0(\omega_n,x_n)\xi_n} \leq l' e^{(a+\eps)l}\abs{\xi_n} \to l' e^{(a+\eps)l}\abs{\zeta}
\end{align*}
we only need to show that the left hand side converges to $\abs{T^l_0(\omega,x)\zeta}$. Since $\{\xi_n\}_{n\in\N} \cup \{\zeta\}$ is a compact set in $\R^d$ and the derivatives of each component of $\omega_n$ converge uniformly on compact sets we get for all $\zeta \in E$
\begin{align*}
\abs{T^l_0(\omega,x)\zeta} \leq l' e^{(a+\eps)l}\abs{\zeta}.
\end{align*}
Then Lemma \ref{lem:ExistenceOfl} implies that actually $\zeta \in E(\omega,x)$, which completes the proof.
\end{proof}

For $(\omega,x) \in \Lambda^{l'}_{a,b,k,\eps}$ and $n \in \N$ Lemma \ref{lem:ExistenceOfl} also allows us to define an inner product $\Linner{~}{~}$ on $T_{f^n_\omega x}\R^d$ such that
\begin{align*}
\Linner{\xi}{\xi'} &= \sum_{l=0}^{+\infty} e^{-2(a + 2\eps)l} \Big\langle S^l_n(\omega,x)\xi, S^l_n(\omega,x)\xi'\Big\rangle, && \text{for } \xi,\xi' \in \sTS{n}\\
\Linner{\eta}{\eta'} &= \sum_{l=0}^{n} e^{2(b - 2\eps)l} \left\langle \left[U^l_{n-l}(\omega,x)\right]^{-1}\eta, \left[U^l_{n-l}(\omega,x)\right]^{-1}\eta'\right\rangle, && \text{for } \eta,\eta' \in \uTS{n}.\\
\end{align*}
and $\sTS{n}$ and $\uTS{n}$ are orthogonal with respect to $\Linner{~}{~}$. Thus we can define the norms
\begin{align*}
\Lnorm{\xi} &:= \left[\Linner{\xi}{\xi}\right]^{\frac{1}{2}}& &\text{for } \xi \in \sTS{n};\\
\Lnorm{\eta} &:= \left[\Linner{\eta}{\eta}\right]^{\frac{1}{2}}& &\text{for } \eta \in \uTS{n};\\
\Lnorm{\zeta} &:= \max\left\{\Lnorm{\xi} ,\Lnorm{\eta}\right\}& &\text{for } \zeta = \xi + \eta \in \sTS{n} \oplus \uTS{n}.
\end{align*}

The sequence of norms $\{\Lnorm{\cdot}\}_{n\in \N}$ is usually called {\it Lyapunov metric} at the point $(\omega,x)$. By the definition of the inner product and by Lemma \ref{lem:ContinuousDependeceOfStableSpaces} the inner product $\Linner{~}{~}$ depends continuously on $(\omega,x) \in \Lambda^{l'}_{a,b,k,\eps}$. Now we can state \citep[Lemma III.1.3]{Liu95}.

\begin{lemma} \label{lem:EstimatesOnLnorm}
Let $(\omega,x) \in \Lambda^{l'}_{a,b,k,\eps}$. Then the Lyapunov metric at $(\omega,x)$ satisfies for each $n\in \N$
\begin{enumerate}
\item $\Lnorm[n+1]{S^1_{n}(\omega,x)\xi} \leq e^{a+2\eps} \Lnorm{\xi}\quad$ for $\xi \in \sTS{n}$;
\item $\Lnorm[n+1]{U^1_{n}(\omega,x)\eta} \geq e^{b-2\eps} \Lnorm{\eta}\quad$ for $\eta \in \uTS{n}$;
\item $\frac{1}{2} \abs{\zeta} \leq \Lnorm{\zeta} \leq Ae^{2\eps n}\abs{\zeta}\quad$ for $\zeta \in T_{f^n_\omega x}\R^d$, where $A = 4(l')^2(1-e^{-2\eps})^{-\frac{1}{2}}$.
\end{enumerate}
\end{lemma}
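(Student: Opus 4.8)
The plan is to verify the three estimates directly from the series that define $\Linner{\cdot}{\cdot}$, using the cocycle relations for $S$ and $U$ together with Lemma~\ref{lem:ExistenceOfl}. The one structural fact used repeatedly is that $T^1_n(\omega,x)$ respects the splitting $T_{f^n_\omega x}\R^d = \sTS{n}\oplus\uTS{n}$, mapping $\sTS{n}$ onto $\sTS{n+1}$ and $\uTS{n}$ onto $\uTS{n+1}$; this is immediate from $\sTS{n}=D_xf^n_\omega\sTS{0}$, $\uTS{n}=D_xf^n_\omega\uTS{0}$ and $f^{n+1}_\omega = f_n(\omega)\circ f^n_\omega$.

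For \emph{(i)} I would use $S^{l+1}_n(\omega,x) = S^l_{n+1}(\omega,x)\circ S^1_n(\omega,x)$, which follows from the chain rule applied to $f^{l+1}_n(\omega) = f^l_{n+1}(\omega)\circ f_n(\omega)$ and restriction to $\sTS{n}$. Substituting $S^1_n(\omega,x)\xi$ into the definition of $\Lnorm[n+1]{\cdot}^2$ and re-indexing $l\mapsto l+1$ gives
\begin{align*}
\Lnorm[n+1]{S^1_n(\omega,x)\xi}^2 = e^{2(a+2\eps)}\sum_{l\geq 1}e^{-2(a+2\eps)l}\abs{S^l_n(\omega,x)\xi}^2 \leq e^{2(a+2\eps)}\Lnorm{\xi}^2 ,
\end{align*}
and taking square roots proves \emph{(i)}. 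For \emph{(ii)} the analogous identity is $U^l_{n+1-l}(\omega,x)=U^1_n(\omega,x)\circ U^{l-1}_{n+1-l}(\omega,x)$ for $l\geq 1$, so $[U^l_{n+1-l}(\omega,x)]^{-1}U^1_n(\omega,x) = [U^{l-1}_{n+1-l}(\omega,x)]^{-1}$; substituting $U^1_n(\omega,x)\eta$ into $\Lnorm[n+1]{\cdot}^2$, discarding the nonnegative $l=0$ term, and re-indexing $l\mapsto l+1$ gives $\Lnorm[n+1]{U^1_n(\omega,x)\eta}^2\geq e^{2(b-2\eps)}\Lnorm{\eta}^2$.

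For the lower bound in \emph{(iii)}, keeping only the $l=0$ summand in each series (where $S^0_n=U^0_n=\id$) yields $\Lnorm{\xi}\geq\abs{\xi}$ for $\xi\in\sTS{n}$ and $\Lnorm{\eta}\geq\abs{\eta}$ for $\eta\in\uTS{n}$; since $\Lnorm{\zeta}=\max\{\Lnorm{\xi},\Lnorm{\eta}\}$ for the unique decomposition $\zeta=\xi+\eta$ and $\abs{\zeta}\leq\abs{\xi}+\abs{\eta}\leq 2\max\{\abs{\xi},\abs{\eta}\}$, the factor $\tfrac12$ drops out. For the upper bound, Lemma~\ref{lem:ExistenceOfl}(i) and (iv) give $\abs{S^l_n(\omega,x)\xi}\leq l'e^{\eps n}e^{(a+\eps)l}\abs{\xi}$, and (ii) and (iv) give $\abs{[U^l_{n-l}(\omega,x)]^{-1}\eta}\leq l'e^{\eps n}e^{-(b-\eps)l}\abs{\eta}$; in each case the exponential weights collapse to a geometric series of ratio $e^{-2\eps}$, so $\Lnorm{\xi}\leq l'e^{\eps n}(1-e^{-2\eps})^{-1/2}\abs{\xi}$ and likewise for $\eta$. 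It remains to control $\abs{\xi},\abs{\eta}$ by $\abs{\zeta}$: Lemma~\ref{lem:ExistenceOfl}(iii) (with its index $n$ set to $0$) gives $\gamma_0:=\gamma(\sTS{n},\uTS{n})\geq (l')^{-1}e^{-\eps n}$, hence $\abs{\sprod{\xi,\eta}}\leq\cos\gamma_0\,\abs{\xi}\,\abs{\eta}$ by the definition of the angle, hence $\abs{\zeta}^2\geq(1-\cos\gamma_0)(\abs{\xi}^2+\abs{\eta}^2)$; using $1-\cos\gamma_0\geq 2\gamma_0^2/\pi^2$ one gets $\max\{\abs{\xi},\abs{\eta}\}\leq (\pi/\sqrt 2)\,l'e^{\eps n}\abs{\zeta}$. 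Combining the two estimates, $\Lnorm{\zeta}\leq (\pi/\sqrt 2)(l')^2(1-e^{-2\eps})^{-1/2}e^{2\eps n}\abs{\zeta}$, and $\pi/\sqrt 2<4$ gives the stated bound with $A=4(l')^2(1-e^{-2\eps})^{-1/2}$.

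The re-indexing identities and the geometric sums are routine; the step that requires care is the last one in \emph{(iii)} --- converting the lower bound on the angle between $\sTS{n}$ and $\uTS{n}$, which genuinely degenerates at rate $e^{-\eps n}$, into an upper bound on the Euclidean norms of the stable and unstable components of a vector $\zeta$, while checking that all accumulated constants still fit inside the claimed value of $A$.
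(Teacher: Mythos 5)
Your proof is correct and follows essentially the same route as the paper's reference (\citep[Lemma III.1.3]{Liu95}): re-index the defining series using the cocycle identities for $S^l_n$ and $U^l_{n-l}$ to get (i) and (ii), take the $l=0$ term for the lower bound in (iii), and for the upper bound combine the geometric sum bound on each component with a quantitative version of the angle estimate from Lemma~\ref{lem:ExistenceOfl}(iii). The only place I would sanity-check is the very last constant: your chain gives $\Lnorm{\zeta}\leq(\pi/\sqrt2)(l')^2(1-e^{-2\eps})^{-1/2}e^{2\eps n}\abs\zeta$, and $\pi/\sqrt2\approx2.22<4$, so the claimed $A$ is indeed reached (with room to spare); one must also note, as you implicitly do, that the paper's definition of $\gamma(\cdot,\cdot)$ forces $\gamma_0\in[0,\pi/2]$, which is what makes both $\cos\gamma_0\geq0$ and $1-\cos\gamma_0\geq2\gamma_0^2/\pi^2$ available.
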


\begin{proof}
See \citep[Lemma III.1.3]{Liu95}.
\end{proof}

To the end of this section we will prove the following important lemma. The proof is similar to the one of \citep[Lemma III.1.4]{Liu95} but has to be adapted to the situation of a non-compact state space. We will use $\Lip(\cdot)$ to denote the Lipschitz constant of a function with respect to the Euclidean norm $\abs{\cdot}$ if not mentioned otherwise.

\begin{lemma} \label{lem:ExistenceOfr}
 There exists a Borel set $\Gamma_0 \subset \Omega^\N \times \R^d$ and a measurable function $r: \Gamma_0 \to (0, \infty)$ such that $\nu^\N \times \mu (\Gamma_0) = 1$, $F\Gamma_0 \subset \Gamma_0$ and for all $(\omega, x) \in \Gamma_0$ 
\begin{enumerate}
 \item the map 
\begin{align*}
 F_{(\omega,x),0} = \exp_{f_0(\omega)x}^{-1} \circ f_0(\omega) \circ \exp_x : T_x\R^d \ni B_x(0,1) \to T_{f_0(\omega)x}\R^d,
\end{align*}
where $B_x(0,1)$ denotes the unit ball in $T_x\R^d$ around $0$, satisfies
\begin{align*}
 \Lip(D_{\cdot}F_{(\omega,x),0}) &\leq r(\omega,x),\\
 \Lip(D_{F_{(\omega,x),0}(\cdot)}F_{(\omega,x),0}^{-1}) &\leq r(\omega,x);
\end{align*}
\item $r(F^n(\omega,x)) = r(\tau^n\omega,f^n_\omega x) \leq r(\omega, x) e^{\eps n}$.
\end{enumerate}
\end{lemma}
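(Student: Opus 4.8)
The plan is to build $\Gamma_0$ and $r$ out of the Pesin sets $\PSsix$ (for $l' \in \N$) together with the integrands appearing in Assumptions \ref{ass2} and \ref{ass1b}. The starting point is that for fixed $l'$ the function $l(\cdot,\cdot,0)$ is bounded by $l'$ on $\PSsix$, and by Lemma \ref{lem:ContinuousDependeceOfStableSpaces} the splitting $T_x\R^d = \sTS{0} \oplus \uTS{0}$ — hence the associated orthogonal projections $\pi^s_{(\omega,x)}$, $\pi^u_{(\omega,x)}$ — depends continuously on $(\omega,x) \in \PSsix$. First I would express the two Lipschitz constants in i) in terms of the Euclidean second derivatives: writing $F_{(\omega,x),0} = \exp_{f_0(\omega)x}^{-1}\circ f_0(\omega)\circ\exp_x$, one has $\Lip(D_\cdot F_{(\omega,x),0}) \le \sup_{\xi\in B_x(0,1)}\abs{D^2_\xi F_{(\omega,x),0}}$ on the convex set $B_x(0,1)$, and similarly the inverse map $F^{-1}_{(\omega,x),0}$ restricted to $F_{(\omega,x),0}(B_x(0,1))$ — which contains a ball of controlled radius once $\abs{D_0 F^{-1}_{(\omega,x),0}}$ is controlled — has $\Lip(D_\cdot F^{-1}_{(\omega,x),0})$ bounded by $\sup \abs{D^2 F^{-1}_{(\omega,x),0}}$. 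So define
\begin{align*}
\rho(\omega,x) := 1 + \sup_{\xi\in B_x(0,1)}\abs{D^2_\xi F_{(\omega,x),0}} + \sup_{\xi\in B_x(0,1)}\abs{D^2_{F_{(\omega,x),0}(\xi)}F^{-1}_{(\omega,x),0}} + \abs{D_0 F^{-1}_{(\omega,x),0}}.
\end{align*}
By Assumptions \ref{ass2} and \ref{ass1b}, $\log^+\rho \in \mathcal L^1(\nu^\N\times\mu)$.

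The second step is the tempering argument, which is the only place where a genuine dynamical input is needed. Since $\log^+\rho\in\mathcal L^1$ and $\nu^\N\times\mu$ is $F$-invariant, by the Birkhoff ergodic theorem (applied as in the standard tempering lemma, e.g. \citep[Section I.3]{Liu95} or \citep{Arnold98}) we get $\frac1n\log^+\rho(F^n(\omega,x))\to 0$ for $\nu^\N\times\mu$-a.e. $(\omega,x)$. On the full-measure $F$-invariant set $\Gamma_0$ where this holds, set
\begin{align*}
r(\omega,x) := C\cdot\sup_{n\ge 0} \rho(F^n(\omega,x))\, e^{-\eps n},
\end{align*}
for a universal constant $C$ absorbing the passage from the Euclidean second-derivative bound to the actual Lipschitz constants of $D_\cdot F_{(\omega,x),0}$ and $D_\cdot F^{-1}_{(\omega,x),0}$ (this constant depends only on $d$). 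Tempering guarantees this supremum is finite and measurable, part i) holds at $(\omega,x)$ because $r(\omega,x)\ge C\rho(\omega,x)$ dominates the $n=0$ term, and part ii) is immediate:
\begin{align*}
r(F^m(\omega,x)) = C\sup_{n\ge 0}\rho(F^{n+m}(\omega,x))e^{-\eps n} = e^{\eps m} C\sup_{n\ge m}\rho(F^n(\omega,x))e^{-\eps n} \le e^{\eps m} r(\omega,x).
\end{align*}

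The main obstacle I anticipate is the first step — cleanly bounding the two Lipschitz constants in i) by the quantities appearing in the Assumptions, uniformly enough that the constant $C$ does not itself depend on $(\omega,x)$. For $\Lip(D_\cdot F_{(\omega,x),0})$ this is essentially the mean value inequality on the convex ball $B_x(0,1)$. The delicate part is $\Lip(D_{F_{(\omega,x),0}(\cdot)}F^{-1}_{(\omega,x),0})$: one must check that $F_{(\omega,x),0}(B_x(0,1))$ contains a ball around $0$ whose radius is bounded below in terms of $\abs{D_0F^{-1}_{(\omega,x),0}}$ and the second-derivative bound (an inverse function theorem estimate with quantitative control), restrict to that ball, and only there invoke Assumption \ref{ass2}'s second integrand; shrinking $B_x(0,1)$ to a smaller fixed radius if necessary, which is permitted by the remark following Assumption \ref{ass3}. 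Once these two estimates are in hand with a dimension-only constant, the tempering step and the verification of ii) are routine, and measurability of $r$ follows since $\rho$ is measurable and $F$ is measurable, so $(\omega,x)\mapsto \sup_n \rho(F^n(\omega,x))e^{-\eps n}$ is a countable supremum of measurable functions.
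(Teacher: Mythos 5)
Your core strategy is the same as the paper's: collect the relevant derivative data into a single $\mathcal{L}^1(\nu^\N\times\mu)$ quantity, apply Birkhoff to get tempering along $F$-orbits, and set $r(\omega,x):=\sup_{n\ge0}\rho(F^n(\omega,x))e^{-\eps n}$. The verification of property ii) is identical, and the finiteness/measurability argument is the standard one. So the skeleton is right.

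The part you flag as the ``main obstacle'' is, however, a misreading of Assumption \ref{ass2}. Its second integrand is
\begin{align*}
\sup_{\xi\in B_x(0,1)}\abs{D^2_{F_{(\omega,x),0}(\xi)}F^{-1}_{(\omega,x),0}},
\end{align*}
which is already the supremum of $\abs{D^2_\eta F^{-1}_{(\omega,x),0}}$ over the image set $\eta\in F_{(\omega,x),0}(B_x(0,1))$ — exactly the domain on which $\Lip(D_{F_{(\omega,x),0}(\cdot)}F^{-1}_{(\omega,x),0})$ is taken. There is therefore no need to show that the image contains a ball of controlled radius, no quantitative inverse-function theorem, and no constant $C$: the mean value inequality gives the Lipschitz bound directly from this supremum, just as it does for the forward map on the convex ball $B_x(0,1)$. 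In consequence the extra term $\abs{D_0F^{-1}_{(\omega,x),0}}$ and Assumption \ref{ass1b} are not used in this lemma at all. The paper simply sets
\begin{align*}
r'(\omega,x):=\max\left\{\sup_{\xi\in B_x(0,1)}\abs{D^2_\xi F_{(\omega,x),0}},\ \sup_{\xi\in B_x(0,1)}\abs{D^2_{F_{(\omega,x),0}(\xi)}F^{-1}_{(\omega,x),0}}\right\},
\end{align*}
notes $\log r'\in\mathcal L^1$ by Assumption \ref{ass2} alone, tempers it via Birkhoff, and takes $r(\omega,x)=\sup_{n\ge0}r'(F^n(\omega,x))e^{-\eps n}$ with no auxiliary constant — so $r(\omega,x)\ge r'(\omega,x)$ already gives property i) without any ``absorbing'' factor. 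Your proof is not wrong, but the whole second half of your first step (and the dependence on Assumption \ref{ass1b}) is unnecessary machinery created by that misreading; once it is removed your argument collapses to the paper's.
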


\begin{proof}
Let us define the function $r': \Omega^\N \times \R^d$ by
\begin{align*}
r'(\omega,x) := &\max\left\{ \sup_{\xi \in B_x(0,1)} \abs{D^2_\xi F_{(\omega,x),0}}; \sup_{\xi \in B_{x}(0,1)}\abs{D^2_{F_{(\omega,x),0}(\xi)} F^{-1}_{(\omega,x),0}} \right\},
\end{align*}
where $D^2$ is the second derivative operator. Then by Assumption \ref{ass2} we have $\log(r') \in \mathcal{L}^1(\nu^{\N}\times\mu)$. According to Birkhoff's ergodic theorem there exists a measurable set $\Gamma_0 \subseteq \Omega^\N \times \R^d$ with $\nu^{\N}\times\mu(\Gamma_0) = 1$ and $F\Gamma_0 \subseteq \Gamma_0$ such that for all $(\omega,x) \in \Gamma_0$ we have
\begin{align*}
\lim_{n\to \infty} \frac{1}{n} \log\left(r'(F^n(\omega,x))\right) = 0.
\end{align*}
Thus it follows that
\begin{align*}
r(\omega,x) := \sup_{n \geq 0}\left\{r'(F^n(\omega,x)) e^{-\eps n} \right\}
\end{align*}
is finite at each point $(\omega,x) \in \Gamma_0$ and $r$ satisfies the requirements of the lemma by the mean value theorem.
\end{proof}

\subsection{Local Stable Manifolds}

Fix a number $r' \geq 1$ such that the Borel set
\begin{align*}
\Lambda^{l',r'}_{a,b,k,\eps} := \left\{(\omega,x) \in \Lambda^{l'}_{a,b,k,\eps} \cap \Gamma_0 : r(\omega,x)\leq r'\right\}
\end{align*}
is non-empty. For ease of notation we will abbreviate $\Lambda' :=  \Lambda^{l',r'}_{a,b,k,\eps}$. Then we can introduce the notion of {\it local stable manifolds} as in \citep[Section III.3]{Liu95}.

\begin{definition}
Let $X$ be a metric space and let $\{D_x\}_{x\in X}$ be a collection of subsets of $\R^d$. We call $\{D_x\}_{x \in X}$ a continuous family of $C^1$ embedded $k$-dimensional discs in $\R^d$ if there is a finite open cover $\{U_i\}_{i=1,\dots,l}$ of $X$ such that for each $U_i$ there exists a continuous map $\theta_i: U_i \to \Emb^1(B^k,\R^d)$ such that $\theta_i(x)B^k = D_x$, $x \in U_i$, where $B^k := \{\xi \in \R^k: \abs{\xi} < 1\}$ is the open unit ball in $\R^k$ and the topology on $\Emb^1(B^k,\R^d)$ is the one induced by uniform convergence on compact sets.
\end{definition}

Then we have the main theorem of this section, which states the existence of local stable manifolds and its representation (see \citep[Theorem III.3.1]{Liu95}).

\begin{theorem} \label{thm:localStableManifold}
For each $n \in \N$ there exists a continuous family of $C^1$ embedded $k$-di\-men\-sio\-nal discs $\{W_n(\omega,x)\}_{(\omega,x)\in \Lambda'}$ in $\R^d$  and there exist numbers $\alpha_n, \beta_n$ and $\gamma_n$ which depend only on $a, b, k, \eps, l'$ and $r'$ such that the following hold true for every $(\omega,x) \in \Lambda'$:
\begin{enumerate}
\item There exists a $C^{1,1}$ map
  \begin{align*}
  h_{(\omega,x),n} : O_n(\omega,x) \to H_n(\omega,x),
  \end{align*}
  where $O_n(\omega,x)$  is an open subset of $E_n(\omega,x)$ which contains $\{\xi \in E_n(\omega,x) : \abs{\xi} \leq \alpha_n\}$, such that
\begin{enumerate}
  \item $h_{(\omega,x),n} (0) = 0$;
  \item $\Lip(h_{(\omega,x),n}) \leq \beta_n$, $\Lip(D_\cdot h_{(\omega,x),n}) \leq \beta_n$;
  \item $W_n(\omega,x) = \exp_{f^n_\omega x} \graph(h_{(\omega,x),n})$ and $W_n(\omega,x)$ is tangent to $E_n(\omega,x)$ at the point $f^n_\omega x$;
\end{enumerate}
\item $f_n(\omega) W_n(\omega,x) \subseteq W_{n+1}(\omega,x)$
\item $d^s(f^l_n(\omega)y,f^l_n(\omega)z) \leq \gamma_n e^{(a+4\eps)l} d^s(y,z)$ for $y,z \in W_n(\omega,x)$, $l\in \N$, where $d^s(\cdot, \cdot)$ is the distance along $W_m(\omega,x)$ for $m \in \N$;
\item $\alpha_{n+1} = \alpha_n e^{-5\eps}, \beta_{n+1} = \beta_n e^{7\eps}$ and $\gamma_{n+1} = \gamma_n e^{2\eps}$.
\end{enumerate}
\end{theorem}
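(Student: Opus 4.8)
The plan is to construct the local stable manifold $W_n(\omega,x)$ as a graph over the stable subspace $E_n(\omega,x)$ by applying the classical Perron--Hadamard graph transform method in the Lyapunov metric, following \citep[Section III.3]{Liu95} but keeping track of the fact that the relevant bounds now come from the functions $l$ and $r$ being controlled on the Pesin set $\Lambda'$ rather than from global compactness. First I would fix $(\omega,x) \in \Lambda'$ and, for each $m \geq n$, work with the centered maps $F_{(\tau^m\omega, f^m_\omega x),0}$ expressed in the inner products $\Linner[m]{\cdot}{\cdot}$; by Lemma \ref{lem:EstimatesOnLnorm} these maps contract the stable directions by a factor $e^{a+2\eps} < 1$ and expand the unstable directions by $e^{b-2\eps} > 1$, and by Lemma \ref{lem:ExistenceOfr} together with Lemma \ref{lem:EstimatesOnLnorm}(iii) the second derivatives in the Lyapunov metric are controlled on a ball of radius comparable to $(r')^{-1}e^{-c\eps m}$. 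This is exactly the uniform-hyperbolicity-plus-bounded-nonlinearity setup in which the graph transform is a contraction on the space of Lipschitz graphs with small Lipschitz constant.

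Next I would define, for each $N \geq n$, the backward iterate: the graph transform sends a Lipschitz graph over $E_{N+1}$ to one over $E_N$, and iterating from $N$ down to $n$ and letting $N \to \infty$ produces (by the uniform contraction in the space of graphs) a limiting Lipschitz map $h_{(\omega,x),n}$ whose graph, transported by $\exp_{f^n_\omega x}$, is $W_n(\omega,x)$. The properties then fall out essentially as in Liu--Qian: item (i)(a) is automatic from the centering $F_{(\omega,x),n}(0)=0$; items (i)(b), (i)(c) come from the $C^{1,1}$ fixed-point theorem applied to the graph transform, which also yields the stated tangency to $E_n(\omega,x)$; item (ii) is immediate because the construction is consistent under one step of the dynamics; item (iii), the exponential contraction along $W_n$, follows from Lemma \ref{lem:EstimatesOnLnorm}(i) combined with the comparison $\frac12|\zeta| \leq \Lnorm[n]{\zeta} \leq A e^{2\eps n}|\zeta|$, which converts the clean $e^{a+2\eps}$ contraction in the Lyapunov metric into an $e^{(a+4\eps)l}$ contraction in the Euclidean metric at the cost of the prefactor $\gamma_n$. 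The recursions in (iv) for $\alpha_n,\beta_n,\gamma_n$ track exactly how the domain radius, the Lipschitz bounds, and the contraction prefactor degrade by controlled powers of $e^{\eps}$ as $n$ increases, coming from Lemma \ref{lem:ExistenceOfl}(iv) and Lemma \ref{lem:ExistenceOfr}(ii).

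The continuity of the family $\{W_n(\omega,x)\}_{(\omega,x)\in\Lambda'}$ is where the non-compactness genuinely requires care, and I expect this to be the main obstacle. On $\Lambda'$ we have, by Lemma \ref{lem:ContinuousDependeceOfStableSpaces}, continuous dependence of $E_0(\omega,x)$ (hence of $E_n(\omega,x)$) on $(\omega,x)$, and by construction the inner products $\Linner[n]{\cdot}{\cdot}$ also depend continuously on $(\omega,x)\in\Lambda'$. The graph transform is built from the maps $F_{(\tau^m\omega,f^m_\omega x),0}$ and their first two derivatives; since the topology on $\Omega$ is uniform convergence on compact sets up to order $2$ and the relevant balls $B_{f^m_\omega x}(0,\text{radius})$ stay within a fixed compact set once $(\omega,x)$ ranges over a convergent sequence, the graph transforms depend continuously on $(\omega,x)$ in the appropriate topology, uniformly over the (finitely many) iterates needed for any prescribed accuracy. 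The uniform-in-$(\omega,x)$ contraction rate of the graph transform on $\Lambda'$ then upgrades this to continuity of the fixed point $h_{(\omega,x),n}$, and a standard argument produces the finite open cover and the continuous maps $\theta_i$ into $\Emb^1(B^k,\R^d)$ required by the definition of a continuous family of $C^1$ embedded discs. I would therefore organize the proof so that all estimates are stated with constants depending only on $a,b,k,\eps,l',r'$, making the uniformity over $\Lambda'$ manifest, and only at the very end invoke the measurability of $(\omega,x)\mapsto l(\omega,x,0)$ and $(\omega,x)\mapsto r(\omega,x)$ to glue the pieces over the full-measure set.
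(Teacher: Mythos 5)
Your proposal is correct and follows essentially the same route as the paper, which simply defers to the Hadamard--Perron graph transform argument of \citep[Theorem~III.3.1]{Liu95} and only verifies that the required Lipschitz and contraction estimates for $F_{(\omega,x),l}$ (in the Lyapunov metric, on balls of radius $r_0e^{-3\eps l}$) hold in the non-compact setting, using precisely Lemmas~\ref{lem:ExistenceOfl}, \ref{lem:EstimatesOnLnorm} and~\ref{lem:ExistenceOfr}. Your account of the continuity of the family via Lemma~\ref{lem:ContinuousDependeceOfStableSpaces} and the local-uniform topology on $\Omega$ supplies exactly the observation that makes the cited compact-case argument transfer; the only slight imprecision is in the bookkeeping of the exponent in item~(iii), where the paper's intermediate estimate actually produces $e^{(a+6\eps)n}$ in the Lyapunov norm before passing to the intrinsic distance, not a direct conversion of $e^{a+2\eps}$ to $e^{a+4\eps}$, but this does not affect the structure of the argument.
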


\begin{proof}
For the proof see \citep[Theorem III.3.1]{Liu95}. But let us emphasize that the following estimates are essential for the proof and that they are satisfied in our situation. Put
\begin{align*} %\label{eq:DefConstants}
 \eps_0 := e^{a + 4 \eps} - e^{a + 2\eps}, \quad c_0 :=4 A r' e^{2\eps}, \quad r_0 := c_0^{-1}\eps_0.
\end{align*}
Then one can easily check by using the results from Section \ref{sec:lyapunovmetric} that for $l \geq 0$ the map
\begin{align*}
F_{(\omega,x),l} = \exp^{-1}_{f^{l+1}_\omega x} \circ f_l(\omega) \circ \exp_{f^l_\omega x} : \left\{\xi \in T_{f^l_\omega x}\R^d : \Lnorm[l]{\xi} \leq r_0 e^{-3\eps l}\right\} \to T_{f^{l+1}_\omega x}\R^d
\end{align*}
satisfies
\begin{align*} 
\Lip_{\norm{\cdot}}(D_\cdot F_{(\omega,x),l}) \leq c_0 e^{3\eps l} \qquad \text{and} \qquad
\Lip_{\norm{\cdot}}(F_{(\omega,x),l} - D_0 F_{(\omega,x),l}) \leq \eps_0,
\end{align*}
where $\Lip_{\norm{\cdot}}$ denotes the Lipschitz constant with respect to $\Lnorm[l]{\cdot}$ and $\Lnorm[l+1]{\cdot}$. Furthermore if we define for $n, l \geq 0$ the composition by
\begin{align*}
F^0_n(\omega,x) = \id,\qquad F^l_n(\omega,x) := F_{(\omega,x),n+l-1} \circ \dots \circ F_{(\omega,x),n}
\end{align*}
then for $(\xi_0,\eta_0) \in \exp^{-1}_x(W_0(\omega,x))$ with $\Lnorm[0]{(\xi_0,\eta_0)} \leq r_0$ we get for every $n \geq 0$ the estimate 
\begin{align*} 
\Lnorm[n]{F^n_0(\omega,x)(\xi_0,\eta_0)} \leq \Lnorm[0]{(\xi_0,\eta_0)} e^{(a + 6\eps)n}.
\end{align*}
\end{proof}

\subsection{Global Stable Manifolds} \label{sec:globalStableMfld}

This section deals with the existence of {\it global stable manifolds}, which are constructed using local stable manifolds. Denote
\begin{align} \label{eq:hatLambda0}
\hat\Lambda_0 := \Lambda_0 \cap \Gamma_0, \qquad \hat \Lambda_{a,b,k} := \Lambda_{a,b,k} \cap \hat\Lambda_0,
\end{align}
where $\Lambda_0$ comes from Theorem \ref{thm:met} and $\Gamma_0$ from Lemma \ref{lem:ExistenceOfr}. Let $\{l'_m\}_{m \in \N}$ and $\{r'_m\}_{m \in\N}$ be a monotone sequence of positive numbers such that $l'_m \nearrow +\infty$ and $r'_m \nearrow +\infty$ as $m \to +\infty$. Then we have for all $m \in \N$
\begin{align*}
\Lambda_{a,b,k,\eps}^{l'_m,r'_m} \subset \Lambda_{a,b,k,\eps}^{l'_{m+1},r'_{m+1}}
\end{align*}
and
\begin{align*}
 \hat\Lambda_{a,b,k} = \bigcup_{m=1}^{+\infty}\Lambda_{a,b,k,\eps}^{l'_m,r'_m}.
\end{align*}
If we denote
\begin{align*}
\{[a_n,b_n]\}_{n\in\N} := \left\{[a,b]: a < b \leq 0,\text{ $a$ and $b$ are rational}\right\}
\end{align*}
and let
\begin{align*}
\eps_n := \frac{1}{2}\min\left\{1,\frac{1}{(200d)}(b_n - a_n)\right\},
\end{align*}
then we have
\begin{align*}
 \hat\Lambda_0 = \left\{\bigcup_{n=1}^{+\infty}\bigcup_{k=1}^{d}\bigcup_{m=1}^{+\infty}\Lambda_{a_n,b_n,k,\eps_n}^{l'_m,r'_m}\right\} \cup \left\{(\omega,x) \in \hat\Lambda_0: \lambda^{(i)}(x) \geq 0, 1 \leq i \leq r(x)\right\}.
\end{align*}
The following theorem, which is \citep[Theorem III.3.2]{Liu95}, then states the existence of global stable manifolds.

\begin{theorem} \label{thm:globalStableManifold}
Let $(\omega,x) \in \hat \Lambda_0\!\setminus\!\left\{(\omega,x) \in \hat\Lambda_0: \lambda^{(i)}(x) \geq 0, 1 \leq i \leq r(x)\right\}$ and let $\lambda^{(1)}(x) < \cdots < \lambda^{(p)}(x)$ be the strictly negative Lyapunov exponents at $(\omega,x)$. Define $W^{s,1}(\omega,x) \subset \cdots \subset W^{s,p}(\omega,x)$ by
\begin{align*}
W^{s,i}(\omega,x) := \left\{y \in \R^d : \limsup_{n\to\infty} \frac{1}{n} \log \abs{f^n_\omega x - f^n_\omega y} \leq \lambda^{(i)}(x)\right\}
\end{align*}
for $1 \leq i \leq p$. Then $W^{s,i}(\omega,x)$ is the image of $V^{(i)}_{(\omega,x)}$ under an injective immersion of class $C^{1,1}$ and is tangent to $V^{(i)}_{(\omega,x)}$ at $x$. In addition, if $y \in W^{s,i}(\omega,x)$ then
\begin{align*}
\limsup_{n\to\infty} \frac{1}{n} \log d^s(f^n_\omega x, f^n_\omega y) \leq \lambda^{(i)}(x)
\end{align*}
where $d^s(~,~)$ denotes the distance along the submanifold $f^n_\omega W^{s,i}(\omega,x)$.
\end{theorem}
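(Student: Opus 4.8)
The plan is to build the global stable manifold $W^{s,i}(\omega,x)$ as an increasing union of backward-pulled-back local stable manifolds, exactly along the lines of \citep[Theorem III.3.2]{Liu95}, and then check that the only place where non-compactness of $\R^d$ intervenes has already been absorbed into the constructions of Section \ref{sec:lyapunovmetric} and the previous subsection. First I would fix $(\omega,x)$ in $\hat\Lambda_0$ outside the exceptional set, so that it has at least one strictly negative Lyapunov exponent; for each $1 \le i \le p$ I choose a rational interval $[a,b]$ with $a < b \le 0$ separating $\lambda^{(i)}(x)$ from the next exponent (so that $E_0(\omega,x)$ for the corresponding $\Lambda_{a,b}$ equals $V^{(i)}_{(\omega,x)}$), and then $m$ large enough that $(\omega,x) \in \Lambda' = \Lambda^{l'_m,r'_m}_{a,b,k,\eps}$ with $k = \dim V^{(i)}_{(\omega,x)}$. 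This is possible by the decomposition of $\hat\Lambda_0$ displayed just before the theorem statement.

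Next, using Theorem \ref{thm:localStableManifold} I have, for every $n \in \N$, a local stable manifold $W_n(\omega,x)$ through $f^n_\omega x$ with the contraction estimate iii) and the forward-invariance $f_n(\omega) W_n(\omega,x) \subseteq W_{n+1}(\omega,x)$. The key point is that $F^n(\omega,x) = (\tau^n\omega, f^n_\omega x)$ also lies in a Pesin set (with enlarged constants $l' e^{\eps n}$, $r' e^{\eps n}$, by the growth estimates in Lemma \ref{lem:ExistenceOfl}, Lemma \ref{lem:ExistenceOfr} and Theorem \ref{thm:localStableManifold} iv)), so local stable manifolds exist at every point of the forward orbit. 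I then define
\begin{align*}
W^{s,i}(\omega,x) := \bigcup_{n=0}^{+\infty} \left(f^n_\omega\right)^{-1} W_n(\omega,x),
\end{align*}
where each $W_n(\omega,x)$ is taken small enough that $f^n_\omega$ restricted to its preimage is a diffeomorphism onto it. Forward-invariance of the family ensures $(f^n_\omega)^{-1} W_n(\omega,x) \subseteq (f^{n+1}_\omega)^{-1} W_{n+1}(\omega,x)$, so this is an increasing union and hence the image of an injective immersion of class $C^{1,1}$, tangent at $x$ to $E_0(\omega,x) = V^{(i)}_{(\omega,x)}$ because $W_0(\omega,x)$ is. The exponential-contraction conclusions, both in the ambient Euclidean distance and in the intrinsic distance $d^s$, follow by propagating estimate iii) of Theorem \ref{thm:localStableManifold} along the orbit: a point $y \in (f^n_\omega)^{-1} W_n(\omega,x)$ has $f^n_\omega y \in W_n(\omega,x)$, and then $d^s(f^{n+l}_\omega x, f^{n+l}_\omega y) \le \gamma_n e^{(a+4\eps)l} d^s(f^n_\omega x, f^n_\omega y)$; taking $\frac1m\log$ and letting $m\to\infty$ with $\eps = \eps_n$ arbitrarily small (by intersecting over the countable family of rational intervals) gives the bound by $\lambda^{(i)}(x)$, and Lemma \ref{lem:EstimatesOnLnorm} iii) transfers this between $d^s$ and $\abs{\cdot}$. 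Finally one checks $W^{s,i}$ is independent of the auxiliary choices $[a,b], m$ by the characterization as the set of $y$ with $\limsup \frac1n \log\abs{f^n_\omega x - f^n_\omega y} \le \lambda^{(i)}(x)$: the inclusion "$\subseteq$" is the contraction estimate just proved, and "$\supseteq$" follows because a point converging faster than $e^{\lambda^{(i)}(x)+\delta}$ must eventually enter the local stable manifold at some $f^n_\omega x$, using that $(\omega,x)$ returns infinitely often to a fixed Pesin set (Poincaré recurrence applied to the $F$-invariant set $\Lambda'$, which has positive measure).

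The main obstacle I expect is precisely this last step — showing the abstract dynamical characterization of $W^{s,i}$ coincides with the union of pulled-back local manifolds — since on a non-compact state space one cannot uniformly control the size $\alpha_n$ of the local manifolds, and $\alpha_n = \alpha_0 e^{-5\eps n}$ shrinks geometrically. The resolution, as in \citep{Liu95}, is to work along a subsequence $n_j$ of return times to a fixed Pesin set $\Lambda^{l'_0,r'_0}_{a,b,k,\eps}$ (which exists by ergodicity/recurrence), where the local-manifold size is bounded below, and to observe that a trajectory which is eventually $e^{(\lambda^{(i)}(x)+\delta)n}$-close to the orbit of $x$ must, at some such return time $n_j$, have its displacement smaller than the uniform lower bound on $\alpha_{n_j}$ along that subsequence, hence lie on $W_{n_j}(\omega,x)$; pulling back by $f^{n_j}_\omega$ places it in $W^{s,i}(\omega,x)$. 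Everything else is a routine transcription of the compact-manifold argument, since the only ingredients used — Lemmas \ref{lem:ExistenceOfl}, \ref{lem:ExistenceOfr}, \ref{lem:EstimatesOnLnorm} and Theorem \ref{thm:localStableManifold} — have already been established in the present non-compact setting under Assumptions \ref{ass1}--\ref{ass3}.
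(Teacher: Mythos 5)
Your overall plan — increasing union of pulled-back local stable manifolds, forward-invariance guaranteeing nesting, and the characterization via the dichotomy between stable and unstable behaviour — is exactly the route taken in \citep[Theorem III.3.2]{Liu95}, to which the paper simply defers, since the non-compactness issues have already been absorbed into Lemmas \ref{lem:ExistenceOfl}, \ref{lem:ExistenceOfr}, \ref{lem:EstimatesOnLnorm} and Theorem \ref{thm:localStableManifold}. The first three paragraphs of your proposal are faithful to that argument.

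Your last paragraph, however, mischaracterizes the difficulty. The geometric shrinking $\alpha_n = \alpha_0 e^{-5\eps n}$ of the local-manifold size is \emph{not} a non-compactness artifact; it is present already in the deterministic compact case and in Liu--Qian's compact random case, and it is handled there without any recurrence argument. The point is that one is free to choose $\eps$ small relative to $\abs{a}$: since $\eps \le (b-a)/(200d) \le \abs{a}/(200d)$, one has $5\eps < \abs{a}$, so for $\delta$ small enough $\lambda^{(i)}(x) + \delta < a + \delta < -5\eps$. Consequently the displacement $\abs{f^n_\omega x - f^n_\omega y}\lesssim e^{(\lambda^{(i)}(x)+\delta)n}$ decays \emph{faster} than $\alpha_n$, so for $n$ large the image $f^n_\omega y$ lands inside the domain of $h_{(\omega,x),n}$ (after converting via Lemma \ref{lem:EstimatesOnLnorm} iii); the sub-exponential factor $A e^{2\eps n}$ is likewise dominated). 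The step that remains — and which you do not name — is the hyperbolic dichotomy: if $f^n_\omega y$ were inside that domain but off the graph of $h_{(\omega,x),n}$, the unstable component of $f^{n+l}_\omega y - f^{n+l}_\omega x$ would grow at rate $\ge e^{(b-2\eps)l}$ by Lemma \ref{lem:ExistenceOfl} ii) / Lemma \ref{lem:EstimatesOnLnorm} ii), contradicting the convergence. That is what forces $f^n_\omega y \in W_n(\omega,x)$; no Poincar\'e recurrence to a fixed Pesin set is required, and in fact invoking it creates a new compatibility problem (the local manifolds $\tilde W_0$ one would construct at return times are larger than $W_{n_j}(\omega,x)$, and it is not immediate that their pull-backs sit inside $\bigcup_n (f^n_\omega)^{-1} W_n(\omega,x)$). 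The only genuinely new ingredient needed for $\R^d$ is the measurable-bound replacement for the compact uniform bound on the second derivative, which is precisely Lemma \ref{lem:ExistenceOfr} under Assumption \ref{ass2} — and you correctly identify that as already in place.
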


\begin{proof}
See \citep[Theorem III.3.2]{Liu95}.
\end{proof}

\begin{definition}
For $(\omega,x)\in \Omega^\N\times\R^d$ the global stable manifold $W^s(\omega,x)$ is defined by
\begin{align*}
W^s(\omega,x) := \left\{y\in\R^d : \limsup_{n\to\infty}\frac{1}{n} \log\abs{f^n_\omega x - f^n_\omega y} < 0 \right\}.
\end{align*}
\end{definition}

Let $\Lambda' = \Lambda_{a,b,k,\eps}^{l',r'}$ be as considered before Theorem \ref{thm:localStableManifold}. For $(\omega,x) \in \Lambda'$ let $\lambda^{(1)}(x) < \dots < \lambda^{(i)}(x)$ be the Lyapunov exponents smaller than $a$. Then one can see that
\begin{align*} %\label{eq:globalStableManifold1}
 W^{s,i}(\omega,x) = \left\{y\in\R^d : \limsup_{n\to\infty}\frac{1}{n} \log \abs{f^n_\omega x - f^n_\omega y} \leq a \right\}.
\end{align*}
Thus if $(\omega,x) \in \hat \Lambda_0\!\setminus\!\left\{(\omega,x) \in \hat\Lambda_0: \lambda^{(i)}(x) \geq 0, 1 \leq i \leq r(x)\right\}$ and $\lambda^{(1)}(x) < \cdots < \lambda^{(p)}(x)$ are the strictly negative Lyapunov exponents at $(\omega,x)$ then we get
\begin{align*}
 W^s(\omega,x) = W^{s,p}(\omega,x)
\end{align*}
and hence $W^s(\omega,x)$ is the image of $V^{(p)}_{(\omega,x)}$ under an injective immersion of class $C^{1,1}$ and is tangent to $V^{(p)}_{(\omega,x)}$ at $x$.

\subsection{Another Estimate on the Derivative}

For the proof of the absolute continuity theorem (see \citep{Biskamp11b}), which will be stated in the next section, we need the following estimate on the derivative.

\begin{lemma} \label{lem:DerivativeEstimate}
 There exists a set $\Gamma_1 \subset \Omega^\N \times \R^d$, with $F\Gamma_1 \subset \Gamma_1$ and $\nu^\N \times \mu(\Gamma_1) = 1$ such that for every $\delta \in (0,1)$, there exists a positive measurable function $C_\delta$ defined on $\Gamma_1$ such that for every $(\omega,x) \in \Gamma_1$ and $n \geq 0$ one has
\begin{align*}
\abs{D_0F^{-1}_{(\omega,x),n}} \leq C_\delta(\omega,x) e^{\delta n}.
\end{align*}
\end{lemma}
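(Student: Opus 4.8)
The plan is to obtain the bound on $\abs{D_0 F^{-1}_{(\omega,x),n}}$ via the Birkhoff ergodic theorem applied to the integrable function supplied by Assumption \ref{ass1b}. First observe that $D_0 F_{(\omega,x),n} = D_{f^n_\omega x} f_n(\omega)$ by the chain rule and the fact that $\exp_y$ is a translation (so its differential is the identity), hence $D_0 F^{-1}_{(\omega,x),n} = \left(D_{f^n_\omega x} f_n(\omega)\right)^{-1} = D_{f^{n+1}_\omega x} f_n(\omega)^{-1}$. Set $\psi(\omega,x) := \log\abs{D_0 F^{-1}_{(\omega,x),0}} = \log\abs{D_{f_0(\omega)x} f_0(\omega)^{-1}}$, which lies in $\mathcal{L}^1(\nu^\N\times\mu)$ precisely by Assumption \ref{ass1b}. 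Then $\log\abs{D_0 F^{-1}_{(\omega,x),n}} = \psi(F^n(\omega,x))$, since $F^n(\omega,x) = (\tau^n\omega, f^n_\omega x)$ and the leading map of $\tau^n\omega$ is $f_n(\omega)$.

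Next apply Birkhoff's ergodic theorem to $\psi$ and the measure-preserving map $F$ on $(\Omega^\N\times\R^d, \nu^\N\times\mu)$: there is an $F$-invariant Borel set $\Gamma_1$ of full measure such that for all $(\omega,x)\in\Gamma_1$ the averages $\frac1n\sum_{k=0}^{n-1}\psi(F^k(\omega,x))$ converge, and in particular $\frac1n\psi(F^n(\omega,x)) \to 0$ as $n\to\infty$ (the difference of consecutive Cesàro sums). One should intersect with the analogous full-measure set for $\abs{\psi}$ to be safe, or simply note that almost-sure convergence of $\frac1n S_n$ forces $\frac1n\psi\circ F^n\to 0$. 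Consequently, fixing $\delta\in(0,1)$, for each $(\omega,x)\in\Gamma_1$ there is $N=N(\omega,x,\delta)$ with $\psi(F^n(\omega,x)) \leq \delta n$ for all $n\geq N$, and one defines
\begin{align*}
C_\delta(\omega,x) := \max\left\{1,\ \sup_{0\leq n\leq N(\omega,x,\delta)} \abs{D_0 F^{-1}_{(\omega,x),n}} e^{-\delta n}\right\},
\end{align*}
which is finite and measurable in $(\omega,x)$ (the supremum is over finitely many terms, each measurable, and $N$ is a measurable function). By construction $\abs{D_0 F^{-1}_{(\omega,x),n}} \leq C_\delta(\omega,x) e^{\delta n}$ for all $n\geq 0$.

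This argument is essentially the same device already used in the proof of Lemma \ref{lem:ExistenceOfr} (taking the supremum of $r'(F^n(\cdot))e^{-\eps n}$), so no genuinely new difficulty arises. The only points requiring a little care are: (i) verifying the chain-rule identity that turns $D_0 F^{-1}_{(\omega,x),n}$ into the evaluation of a single integrable cocycle-type function along the orbit, which is where Assumption \ref{ass1b} enters exactly as stated; and (ii) ensuring measurability of $C_\delta$, i.e. that $N(\omega,x,\delta)$ is a measurable function — this follows since $\{N(\omega,x,\delta)\leq m\}=\bigcap_{n\geq m}\{\psi(F^n(\omega,x))\leq\delta n\}$ is a countable intersection of measurable sets. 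The set $\Gamma_1$ does not depend on $\delta$ (only $C_\delta$ and $N$ do), as required by the statement. I expect the (mild) main obstacle to be purely bookkeeping: confirming that $\Gamma_1$ can be chosen once and for all with $F\Gamma_1\subset\Gamma_1$ while $C_\delta$ is produced afterwards for each $\delta$, which is immediate from the above since the limit $\frac1n\psi\circ F^n\to 0$ holds on a single full-measure invariant set independent of $\delta$.
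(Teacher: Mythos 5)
Your proof is correct and takes essentially the same route as the paper: rewrite $\log\abs{D_0F^{-1}_{(\omega,x),n}}$ as $\log\abs{D_0F^{-1}_{F^n(\omega,x),0}}$, invoke Assumption \ref{ass1b} for integrability, apply Birkhoff's ergodic theorem to get $\frac1n\log\abs{D_0F^{-1}_{(\omega,x),n}}\to 0$ on an $F$-invariant full-measure set $\Gamma_1$ (independent of $\delta$), and then read off $C_\delta$. The paper leaves the construction of $C_\delta$ implicit where you spell it out via $N(\omega,x,\delta)$, but the substance is identical; the paper's own choice would be $C_\delta(\omega,x):=\sup_{n\ge 0}\abs{D_0F^{-1}_{(\omega,x),n}}e^{-\delta n}$, mirroring the definition of $r$ in Lemma \ref{lem:ExistenceOfr} exactly as you anticipated.
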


\begin{proof}
By Assumption \ref{ass1b} we have $\log\abs{D_0F^{-1}_{(\omega,x),0}} \in \mathcal{L}^1(\nu^\N\times\mu)$ and hence we get by Birkhoff's ergodic theorem the existence of a measurable set $\Gamma_1 \subset \Omega^\N \times \R^d$, which satisfies $F\Gamma_1 \subset \Gamma_1$ and $\nu^\N \times \mu(\Gamma_1) = 1$ such that for all $(\omega,x) \in \Gamma_1$
\begin{align*}
\frac{1}{n} \log \abs{D_0F^{-1}_{(\omega,x),n}} = \frac{1}{n} \log \abs{D_0F^{-1}_{F^n(\omega,x),0}} \to 0.
\end{align*}
Thus for all $\delta \in (0,1)$ we find a measurable function $C_\delta$ such that for all $n \geq 0$ and $(\omega,x) \in \Omega^{\N}\times\R^{d}$
\begin{align*}
\abs{D_0F^{-1}_{(\omega,x),n}} \leq C_\delta(\omega,x) e^{\delta n}.
\end{align*}
\end{proof}

Let us fix some $C' \geq 1$ such that the set
\begin{align*}
\Lambda_{a,b,k,\eps}^{l',r',C'} := \left\{(\omega,x) \in \Lambda_{a,b,k,\eps}^{l',r'} \cap \Gamma_1: C_\eps(\omega,x) \leq C' \right\}
\end{align*}
is non-empty and let us abbreviate in the following
\begin{align*}
\Delta := \Lambda_{a,b,k,\eps}^{r',l',C'}.
\end{align*}
The parameters for the definition of $\Delta$ will be fixed from now on.

\section{Absolute Continuity Theorem} \label{sec:TheoremAbsoluteContinuity}

In this section we will state the absolute continuity theorem. To do so we will need some preparation.

Let us choose a sequence of approximating compact sets $\{\Delta^l\}_{l}$ with $\Delta^l \subset \Delta$ and $\Delta^l \subset \Delta^{l+1}$ such that $\prodm{\Delta\!\setminus\!\Delta^l} \to 0$ for $l \to \infty$ and let us fix arbitrarily such a set $\Delta^l$. For $(\omega,x) \in \Delta$ and $r >0$ define
\begin{align*}
\pLball[\Delta,\omega]{x}{r} := \exp_x \left(\left\{\zeta \in T_x\R^d : \Lnorm[0]{\zeta} < r\right\}\right)
\end{align*}
and for $(\omega,x) \in \Delta^l$ let
\begin{align*}
V_{\Delta^l}((\omega,x),r) := \left\{(\omega',x') \in \Delta^l : d(\omega,\omega') < r, x' \in \pLball[\Delta,\omega]{x}{r}\right\},
\end{align*}
where the distance $d$ in $\Omega^\N$ is as before the one induced by uniform convergence on compact sets for all derivatives up to order $2$.
%Existence of $\delta_{\Delta^l}$
Let us denote the collection of local stable manifolds $\{W_0(\omega,x)\}_{(\omega,x)\in \Delta^l}$ which was constructed in Theorem \ref{thm:localStableManifold} in the following by $\{W_{loc}(\omega,x)\}_{(\omega,x)\in \Delta^l}$. Since by Theorem \ref{thm:localStableManifold} this is a continuous family of $C^1$ embedded $k$-dimensional discs and $\Delta^l$ is compact there exists uniformly on $\Delta^l$ a number $\delta_{\Delta^l} > 0$ such that for any $0 < q \leq \delta_{\Delta^l}$ and $(\omega',x')\in V_{\Delta^l}((\omega,x),q/2)$ the local stable manifold $W_{loc}(\omega',x')$ can be represented in local coordinates with respect to $(\omega,x)$, i.e. there exists a $C^1$ map
\begin{align*}
\phi : \left\{ \xi \in \sTS{0} : \Lnorm[0]{\xi} < q \right\} \to \uTS{0}
\end{align*}
with
\begin{align*}
\exp_x^{-1}\left(W_{loc}(\omega',x') \cap \pLball[\Delta,\omega]{x}{q}\right) = \graph(\phi).
\end{align*}
By choosing $\delta_{\Delta^l}$ even smaller we can ensure, that for all $0 < q \leq \delta_{\Delta^l}$, $(\omega,x) \in \Delta^{l}$ and $(\omega',x')\in V_{\Delta^l}((\omega,x),q/2)$
\begin{align*}
\sup \left\{\Lnorm[0]{D_\xi \phi} : \xi \in \sTS{0}, \Lnorm[0]{\xi} < q  \right\} \leq\frac{1}{3}.
\end{align*}

Let us fix until the end of the section $(\omega,x) \in \Delta^l$ and $0 < q \leq \delta_{\Delta^l}$. Then we denote by $\Delta^l_\omega := \left\{x \in \R^d: (\omega,x) \in \Delta^l\right\}$ the $\omega$-section of $\Delta^{l}$ and by $\collLSM{x}{q}$ the collection of local stable submanifolds $W_{loc}(\omega,y)$ passing through $y \in \Delta^l_{\omega} \cap \pLball[\Delta,\omega]{x}{q/2}$ and set
\begin{align*}
&\localLSM{x}{q} := \bigcup_{y \in \Delta^l_{\omega} \cap \pLball[\Delta,\omega]{x}{q/2}} W_{loc}(\omega,y) \cap \pLball[\Delta,\omega]{x}{q}.
\end{align*}

Let us introduce the notion of transversal manifolds to the collection of local stable manifolds $\collLSM{x}{q}$.

\begin{definition} \label{def:TransversalMfld}
A submanifold $W$ of $\R^d$ is called transversal to the family $\collLSM{x}{q}$ if the following hold true
\begin{enumerate}
\item $W \subset \pLball[\Delta,\omega]{x}{q}$ and $\exp_{x}^{-1}W$ is the graph of a $C^1$ map 
\begin{align*}
\psi: \left\{\eta \in \uTS{0} : \Lnorm[0]{\eta} < q\right\} \to \sTS{0};
\end{align*}
\item W intersects any $W_{loc}(\omega,y)$, $y \in \Delta^l_{\omega} \cap \pLball[\Delta,\omega]{x}{q/2}$, at exactly one point and this intersection is transversal, i.e. $T_{z}W \oplus T_{z}W_{loc}(\omega,y) = \R^d$ where $z = W \cap W_{loc}(\omega, y)$.
\end{enumerate}
\end{definition}

For a submanifold $W$ of $\R^d$ transversal to $\collLSM{x}{q}$ let
\begin{align*}
\norm{W} := \sup_\eta\Lnorm[0]{\psi(\eta)} + \sup_{\eta}\Lnorm[0]{D_\eta \psi}
\end{align*}
where the supremum is taken over $\{\eta \in \uTS{0}: \Lnorm[0]{\eta} < q\}$ and $\psi$ is the map representing $W$ as in Definition \ref{def:TransversalMfld}.

Consider two submanifolds $W^1$ and $W^2$ transversal to $\collLSM{x}{q}$. By the choice of $\delta_{\Delta^l}$ each local stable manifold passing through $y \in \Delta^l_\omega \cap \pLball[\Delta,\omega]{x}{q/2}$ can be represented via some function $\phi$, whose norm of the derivative with respect to the Lyapunov metric is bounded by $1/3$. Thus the following map, which is usually called {\it Poincar\'e map} or {\it holonomy map}, is well defined. Define
\begin{align*}
P_{W^1,W^2}: W^1 \cap \localLSM{x}{q} \to W^2 \cap \localLSM{x}{q}
\end{align*}
by
\begin{align*}
P_{W^1,W^2} : z = W^1\cap W_{loc}(\omega,y) \mapsto W^2\cap W_{loc}(\omega,y),
\end{align*}
for each $y \in \Delta^l_{\omega} \cap \pLball[\Delta,\omega]{x}{q/2}$. Since the collection of local stable manifolds is by Theorem \ref{thm:localStableManifold} a continuous family of $C^1$ embedded $k$-dimensional discs $P_{W^1,W^2}$ is a homeomorphism.Denoting the Lebesgue measures on $W^i$ by $\lambda_{W^i}$ for $i = 1,2$. we can define absolute continuity of the family $\collLSM{x}{q}$.

\begin{definition}
The family $\collLSM{x}{q}$ is said to be absolutely continuous if  there exists a number $\eps_{\Delta^l_{\omega}}(x,q) > 0$ such that for any two submanifolds $W^1$ and $W^2$ transversal to $\collLSM{x}{q}$ and satisfying $\norm{W^{i}} \leq \eps_{\Delta^l_{\omega}}(x,q)$, $i = 1,2$, the Poincar\'e map $P_{W^1,W^2}$ constructed as above is absolutely continuous with respect to $\lambda_{W^1}$ and $\lambda_{W^2}$, i.e. $\lambda_{W^1} \approx \lambda_{W^2}\circ P_{W^1,W^2}$.
\end{definition}

Then we have the following main theorem, often called absolute continuity theorem, which is proved for this case in \citep{Biskamp11b}. Let us denote the Lebesgue measure on $\R^d$ by $\lambda$.

\begin{theorem} \label{thm:ACT}
Let $\Delta^l$ be given as above. There exist numbers $0 < q_{\Delta^l} < \delta_{\Delta^l}/2$ and $\eps_{\Delta^l} > 0$ such that for every $(\omega,x) \in \Delta^l$ and $0 < q \leq q_{\Delta^l}$:
\begin{enumerate}
\item The family $\collLSM{x}{q}$ is absolutely continuous.
\item If $\lambda(\Delta_{\omega}^l) > 0$ and $x$ is a density point of $\Delta_{\omega}^l$ with respect to $\lambda$, then for every two submanifolds $W^1$ and $W^2$ transversal to $\collLSM{x}{q_{\Delta^l}}$ and satisfying $\norm{W^{i}} \leq \eps_{\Delta^l}$, $i = 1,2$, any Poincar\'e map $P_{W^1,W^2}$ is absolutely continuous and the Jacobian $J(P_{W^1,W^2})$ satisfies the inequality
\begin{align*}
\frac{1}{2} \leq J(P_{W^1,W^2})(y) \leq 2
\end{align*}
for $\lambda_{W^1}$-almost all $y \in W^1 \cap \localLSM{x}{q_{\Delta^l}}$.
\end{enumerate}
\end{theorem}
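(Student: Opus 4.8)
The plan is to follow the scheme of \citep[Chapter III]{Liu95} from the compact case, localising every estimate to the uniformity set $\Delta^l$; this is exactly what is carried out in \citep{Biskamp11b}, and I sketch it here. The idea is classical: push the two transversals forward by $f^n_\omega$, use that they become exponentially close along stable leaves, and recover the Poincar\'e map as a limit of holonomy maps whose Jacobians are controlled by a convergent infinite product of one-step distortions. Concretely I would fix $(\omega,x)\in\Delta^l$, a radius $0<q\le\delta_{\Delta^l}$, and transversals $W^1,W^2$ to $\collLSM{x}{q}$ with $\norm{W^i}$ small. For $y\in W^1\cap\localLSM{x}{q}$ let $w\in\Delta^l_\omega\cap\pLball[\Delta,\omega]{x}{q/2}$ be the parameter of the leaf with $y\in W_{loc}(\omega,w)$, so $P_{W^1,W^2}y=W^2\cap W_{loc}(\omega,w)$ and $y$ and $P_{W^1,W^2}y$ lie on this common $k$-disc. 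By Theorem \ref{thm:localStableManifold}(iii) (and (ii)) one has $d^s(f^n_\omega y,f^n_\omega P_{W^1,W^2}y)\le\gamma_0 e^{(a+4\eps)n}d^s(y,P_{W^1,W^2}y)$, so $f^n_\omega W^1$ and $f^n_\omega W^2$ converge to one another; writing $P_n:=f^n_\omega\circ P_{W^1,W^2}\circ(f^n_\omega|_{W^1})^{-1}:f^n_\omega W^1\to f^n_\omega W^2$ for the holonomy at level $n$, the chain rule yields
\[
J(P_{W^1,W^2})(y)=\frac{J(P_n)\big(f^n_\omega y\big)\;J\big(f^n_\omega|_{W^1}\big)(y)}{J\big(f^n_\omega|_{W^2}\big)(P_{W^1,W^2}y)}.
\]

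Next I would show $J(P_n)\to1$ uniformly in $y$: the stable-leaf segments joining $f^n_\omega W^1$ to $f^n_\omega W^2$ have length $O(e^{(a+4\eps)n})$, and since by Theorem \ref{thm:localStableManifold} the stable lamination is a continuous family of $C^1$ embedded discs, the holonomy between two transversals that are $O(e^{(a+4\eps)n})$ apart tends to the identity in the $C^0$ sense and its Jacobian tends to $1$; the uniform lower bound on transversality angles over the compact set $\Delta^l$ (coming from Lemma \ref{lem:ExistenceOfl}(iii) and the choice of $\delta_{\Delta^l}$) makes this uniform. It then remains to analyse the telescoping product
\[
\frac{J\big(f^n_\omega|_{W^1}\big)(y)}{J\big(f^n_\omega|_{W^2}\big)(P_{W^1,W^2}y)}=\prod_{k=0}^{n-1}\frac{J\big(f_k(\omega)|_{f^k_\omega W^1}\big)(f^k_\omega y)}{J\big(f_k(\omega)|_{f^k_\omega W^2}\big)(f^k_\omega P_{W^1,W^2}y)} .
\]
Each factor compares the Jacobian of the \emph{same} diffeomorphism $f_k(\omega)$ along two nearby $k$-planes at two nearby base points. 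The base points are $O\big((\norm{W^1}+\norm{W^2})\,e^{(a+4\eps)k}\big)$-close: because $\exp_x^{-1}W^i$ is the graph of $\psi^i$ over $H_0(\omega,x)$ with small $\norm{D\psi^i}$ while the stable leaves are graphs over $E_0(\omega,x)$ of slope $\le1/3$, the two intersection points $y$ and $P_{W^1,W^2}y$ are already $O(\norm{W^1}+\norm{W^2})$-close, after which one applies Theorem \ref{thm:localStableManifold}(iii); and the tangent planes $T_{f^k_\omega y}f^k_\omega W^1$ and $T_{f^k_\omega P_{W^1,W^2}y}f^k_\omega W^2$, transported to a common point, differ by $O\big((\norm{W^1}+\norm{W^2})\,e^{\kappa k}\big)$ for some $\kappa<0$, by the usual graph-transform contraction of ``unstable'' admissible graphs towards the invariant lamination. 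Feeding these into the uniform Lipschitz bound $\Lip(D_\cdot F_{(\omega,x),k})\le r'e^{\eps k}$ of Lemma \ref{lem:ExistenceOfr} and the norm comparison of Lemma \ref{lem:EstimatesOnLnorm}(iii), each factor is $1+O\big((\norm{W^1}+\norm{W^2})\,e^{(a+5\eps)k}\big)$; since $a+5\eps<0$ the product converges and its logarithm is $O(\norm{W^1}+\norm{W^2})$.

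This exhibits $J(P_{W^1,W^2})$ as a genuine positive measurable function on $W^1\cap\localLSM{x}{q}$, and a standard comparison of volumes along $W^1$ and $W^2$ through the iterates $f^n_\omega$ (using that both $J(P_n)\to1$ and the above product converge uniformly in $y$) upgrades the pointwise identity to $\lambda_{W^1}\approx\lambda_{W^2}\circ P_{W^1,W^2}$. This proves (i), with a threshold $\eps_{\Delta^l_\omega}(x,q)$ that may still depend on $(\omega,x,q)$. For (ii) one chooses $\eps_{\Delta^l}>0$ so small that $O(\norm{W^1}+\norm{W^2})\le\log 2$ whenever $\norm{W^i}\le\eps_{\Delta^l}$, which forces $\tfrac12\le J(P_{W^1,W^2})\le2$; the hypothesis that $x$ be a density point of $\Delta^l_\omega$ with respect to $\lambda$ is what guarantees, via Fubini, that $W^i\cap\localLSM{x}{q_{\Delta^l}}$ has positive $\lambda_{W^i}$-measure (so that the bound is not vacuous) and that the threshold $\eps_{\Delta^l}$ may be taken uniform over the compact set $\Delta^l$.

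The main obstacle is precisely the per-step distortion estimate. In the compact case one has global $C^2$ bounds on the diffeomorphisms, whereas here the relevant quantities are only \emph{subexponentially} bounded and become genuinely bounded only after restricting to $\Delta^l$, since they are produced from the integrability Assumptions \ref{ass1}--\ref{ass2} via Birkhoff's ergodic theorem (Lemmas \ref{lem:ExistenceOfl} and \ref{lem:ExistenceOfr}) rather than from compactness. One must therefore verify that the unavoidable losses $e^{\eps k}$ are, in every instance, dominated by the stable contraction $e^{(a+4\eps)k}$, so that each of the geometric series above converges; keeping track of this bookkeeping is the point at which the argument departs from \citep{Liu95} and is the technical heart of \citep{Biskamp11b}.
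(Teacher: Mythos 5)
The paper's own ``proof'' of Theorem \ref{thm:ACT} is a single line: it defers entirely to the companion paper \citep{Biskamp11b}. There is therefore no internal argument to compare against, and your proposal is being judged on whether it is a plausible reconstruction of what \citep{Biskamp11b} (following the scheme of \citep[Ch.\,III]{Liu95}) must do. On that score you have the shape of the argument right: push the transversals forward, exploit the exponential contraction $e^{(a+4\eps)n}$ along stable leaves from Theorem \ref{thm:localStableManifold}, express the Jacobian ratio of $f^n_\omega$ along $W^1,W^2$ as a telescoping product of one-step distortions, control each factor by the Lipschitz bound $r'e^{\eps k}$ of Lemma \ref{lem:ExistenceOfr} together with the norm comparison of Lemma \ref{lem:EstimatesOnLnorm}, and observe that the resulting geometric series converges because $a+5\eps<0$. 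You also correctly identify the technical heart: on $\R^d$ the bounds are only subexponential and come from Birkhoff's theorem plus Assumptions \ref{ass1}--\ref{ass2}, not from compactness, so the bookkeeping of $e^{\eps k}$ against $e^{(a+4\eps)k}$ is exactly where the argument departs from the compact case.

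There is, however, one place where your sketch as written is logically circular. You decompose $J(P_{W^1,W^2})(y)$ via the chain rule into $J(P_n)(f^n_\omega y)$ times the Jacobian ratio, and then assert $J(P_n)\to 1$. But $P_n$ is the holonomy between $f^n_\omega W^1$ and $f^n_\omega W^2$, and absolute continuity of this holonomy --- hence the very existence of $J(P_n)$ as a Radon--Nikodym derivative --- is exactly what the theorem is trying to establish; you cannot invoke it as an input. The standard way to avoid this (and what \citep[Ch.\,III]{Liu95} and, following it, \citep{Biskamp11b} actually do) is to forego Jacobians entirely at level $n$ and compare the \emph{measures} directly: for a measurable $A\subset W^1\cap\localLSM{x}{q}$ one writes $\lambda_{W^1}(A)$ and $\lambda_{W^2}(P(A))$ in terms of $\lambda_{f^n_\omega W^1}(f^n_\omega A)$ and $\lambda_{f^n_\omega W^2}(f^n_\omega P(A))$ via the (genuine, smooth) Jacobians of $f^n_\omega|_{W^i}$, and then shows that the measures of the pushed-forward sets are comparable up to a factor $1+o(1)$ because $f^n_\omega W^1$ and $f^n_\omega W^2$ are exponentially $C^1$-close graphs over $\uTS{n}$ with uniformly bounded geometry on the Pesin set. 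That measure comparison is the genuine content; calling it ``$J(P_n)\to1$'' presupposes what needs proving. A second, smaller point: the density-point hypothesis in part \textit{ii)} is not merely used to ensure the bound is non-vacuous; quantitatively it guarantees that, after shrinking $q$ to $q_{\Delta^l}$, the leaves through $\Delta^l_\omega$-points fill a $(1-o(1))$-fraction of $\localLSM{x}{q}$, which is what lets one absorb the contribution of the ``holes'' and obtain a Jacobian within a factor of $2$ uniformly over $\Delta^l$. With those two clarifications, your sketch is the correct one; the detailed execution is precisely the content of \citep{Biskamp11b}, which the paper at hand simply cites.
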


\begin{proof}
See \citep{Biskamp11b}.
\end{proof}

\section{Absolute Continuity of Conditional Measures} \label{sec:AbsContCondMeasures}

In this section we will state the main conclusion of the absolute continuity theorem namely Theorem \ref{thm:AbsContCondMeasures}, which  roughly speaking says that the conditional measure with respect to the family of local stable manifolds of the volume on the state space is absolutely continuous (in fact, even equivalent) to the induced volume on the local stable manifolds.

Let us start with the following proposition, which is \citep[Proposition 6.1]{Liu95}.

\begin{proposition} \label{prop:AbsCondMeasures}
Let $(X,\mathcal{B},\nu)$ be a Lebesgue space and let $\alpha$ be a measurable partition of $X$. If $\hat \nu$ is another probability measure on $\mathcal{B}$ which is absolutely continuous with respect to $\nu$, then for $\hat \nu$-almost all $x \in X$ the conditional measure $\hat \nu_{\alpha(x)}$ is absolutely continuous with respect to $\nu_{\alpha(x)}$ and
\begin{align*}
\frac{\dx \hat \nu_{\alpha(x)}}{\dx \nu_{\alpha(x)}} = \frac{g|_{\alpha(x)}}{\int_{\alpha(x)} g \dx \nu_{\alpha(x)}}
\end{align*}
where $g = \dx \hat \nu / \dx \nu$.
\end{proposition}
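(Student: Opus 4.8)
The plan is to reduce the statement to the defining property of conditional measures together with the disintegration of a measure having a density, then verify the explicit formula by testing against sets. First I would recall that since $(X,\mathcal{B},\nu)$ is a Lebesgue space and $\alpha$ a measurable partition, both $\nu$ and $\hat\nu$ admit canonical systems of conditional measures $\{\nu_{\alpha(x)}\}$ and $\{\hat\nu_{\alpha(x)}\}$, and the factor measures $\nu_\alpha$, $\hat\nu_\alpha$ on $X/\alpha$ satisfy $\hat\nu_\alpha \ll \nu_\alpha$ with Radon–Nikodym derivative $\bar g(C) := \int_C g \,\dx\nu_C$ (this last identity follows by writing, for a measurable $\alpha$-set $p^{-1}(Z)$, $\hat\nu_\alpha(Z) = \hat\nu(p^{-1}(Z)) = \int_{p^{-1}(Z)} g\,\dx\nu = \int_Z \bigl(\int_C g\,\dx\nu_C\bigr)\dx\nu_\alpha(C)$). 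One should note $\bar g(C) > 0$ for $\hat\nu_\alpha$-a.e.\ $C$, since $\{\bar g = 0\}$ is $\hat\nu_\alpha$-null, so the proposed formula is well defined a.e.

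Next I would define, for $\hat\nu_\alpha$-a.e.\ $C$, a candidate measure $\tilde\nu_C$ on $(C,\mathcal{B}|_C)$ by $\tilde\nu_C(A) := \bar g(C)^{-1}\int_{A} g\,\dx\nu_C$, i.e.\ the measure with density $g|_C / \bar g(C)$ with respect to $\nu_C$; this is a probability measure on $C$. The claim is that $\{\tilde\nu_C\}$ is a canonical system of conditional measures of $\hat\nu$ with respect to $\alpha$, after which uniqueness of such a system forces $\hat\nu_{\alpha(x)} = \tilde\nu_{\alpha(x)}$ for $\hat\nu$-a.e.\ $x$, which is exactly the asserted formula. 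To verify the defining properties: measurability of $C \mapsto \tilde\nu_C(A\cap C)$ follows from measurability of $C\mapsto \int_{A\cap C} g\,\dx\nu_C$ (a consequence of the canonical system property for $\nu$, applied to the sets $\{g > t\}\cap A$ and monotone approximation) together with measurability of $\bar g$; and each $(C,\mathcal{B}|_C,\tilde\nu_C)$ is a Lebesgue space since $\tilde\nu_C \ll \nu_C$ and an absolutely continuous image of a Lebesgue-space measure is again a Lebesgue-space measure. The integration identity is the main computation: for $A\in\mathcal{B}$,
\begin{align*}
\int_{X/\alpha} \tilde\nu_C(A\cap C)\,\dx\hat\nu_\alpha(C)
&= \int_{X/\alpha} \frac{1}{\bar g(C)}\Bigl(\int_{A\cap C} g\,\dx\nu_C\Bigr)\bar g(C)\,\dx\nu_\alpha(C)\\
&= \int_{X/\alpha}\Bigl(\int_{A\cap C} g\,\dx\nu_C\Bigr)\dx\nu_\alpha(C)
= \int_A g\,\dx\nu = \hat\nu(A),
\end{align*}
where the first equality uses $\dx\hat\nu_\alpha = \bar g\,\dx\nu_\alpha$, the cancellation is valid since $\bar g > 0$ $\hat\nu_\alpha$-a.e., and the penultimate step is the disintegration of $\int_A g\,\dx\nu$ along the canonical system for $\nu$ (again via monotone approximation of $g$ by simple functions, for each of which the identity is the defining property of $\{\nu_C\}$).

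I expect the main obstacle to be the measurability and well-definedness bookkeeping rather than any deep idea: specifically, establishing that $C\mapsto \int_{A\cap C} g\,\dx\nu_C$ is measurable for a general integrable $g$ (one has it for $g = \mathbf{1}_B$ by definition of the canonical system, and must pass through nonnegative simple functions, monotone limits, and then differences), and handling the $\hat\nu_\alpha$-null set $\{\bar g = 0\}$ so that the quotient is defined $\hat\nu$-a.e. A secondary point requiring care is invoking uniqueness of the canonical system of conditional measures (the property stated in the Preliminaries as holding for measurable partitions of Lebesgue spaces) to conclude $\hat\nu_{\alpha(x)} = \tilde\nu_{\alpha(x)}$ for $\hat\nu$-a.e.\ $x$ rather than merely $\nu$-a.e.; since $\hat\nu \ll \nu$, a $\nu$-a.e.\ statement is in particular $\hat\nu$-a.e., so this causes no real trouble. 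Everything else is routine measure theory.
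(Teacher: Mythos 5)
Your argument is correct and follows the standard route: build the candidate disintegration $\tilde\nu_C = \bar g(C)^{-1}\,(g|_C)\,\dx\nu_C$ with $\bar g = \dx\hat\nu_\alpha/\dx\nu_\alpha$, verify it satisfies the two defining properties of a canonical system of conditional measures for $\hat\nu$, and then invoke the ($\hat\nu_\alpha$-a.e.) uniqueness of such systems. The paper itself does not supply a proof but refers to Liu--Qian, whose proof is this same construction-plus-uniqueness argument, so your approach is essentially the paper's; the bookkeeping you flag (measurability of $C\mapsto\int_{A\cap C}g\,\dx\nu_C$ via approximation of $g$, and the $\hat\nu_\alpha$-nullity of $\{\bar g=0\}$, which also makes the $\bar g$-cancellation in your chain of equalities legitimate because the inner integral already vanishes there) is exactly where the care is needed and you have it right.
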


\begin{proof}
See \citep[Proposition 6.1]{Liu95}.
\end{proof}

Let $\Delta^l$ be a compact set as in the previous Section. Without loss of generality we can and will assume that $q_{\Delta^l} = \eps_{\Delta^l}$. Let us fix a point $(\omega,x) \in \Delta^l$ until the end of this section such that $\lambda(\Delta^l_\omega) >0$ and $x$ is a density point of $\Delta^l_\omega$ with respect to $\lambda$. Let us introduce the following abbreviations
\begin{align*}
\hat U &:= \tilde U_{\Delta,\omega}(x, q_{\Delta^l})\\
\hat B^1 &:= \left\{ \xi \in \sTS{0} : \LnormAt[0]{x}{\xi} <  q_{\Delta^l}\right\} \\
\hat B^2 &:= \left\{ \eta \in \uTS{0} : \LnormAt[0]{x}{\eta} <  q_{\Delta^l}\right\}.
\end{align*}
We will denote by $\beta$ the measurable partition $\left\{\exp_x\left(\{\xi\}\times \hat B^2 \right)\right\}_{\xi \in \hat B^1}$ of $\hat U$ and by $\alpha$ the partition of $\localLSM{x}{q_{\Delta^l}}$ into local stable manifolds, i.e. $\left\{ W_{loc}(\omega,y) \cap \hat U \right\}_{y \in \Delta^l_\omega \cap \tilde U_{\Delta,\omega}\left(x,q_\Delta^l/2\right)}$. Since $\left\{ W_{loc}(\omega,y)\right\}_{y\in\Delta^l_\omega}$ is a continuous family of $C^1$ $k$-dimensional embedded discs $\alpha$ is a measurable partition of $\localLSM{x}{q_{\Delta^l}}$. Further we define the sets
\begin{align*}
I &:= \beta(x) \cap \localLSM{x}{q_{\Delta^l}} \\
\intertext{and for $N \subset I$}
[N] &:= \bigcup_{z \in N} \alpha(z).
\end{align*}
Since $q_{\Delta^l}$ is chosen such that each local stable manifold $W_{loc}(\omega,y)$ for $y \in \Delta^l_\omega \cap \tilde U_{\Delta,\omega}\left(x,q_\Delta^l/2\right)$ can be expressed as a function on $\sTS{0}$ we have $[I] = \localLSM{x}{q_{\Delta^l}}$. Because $x$ is a density point of $\Delta^l_\omega$ with respect to $\lambda$ we have $\lambda(\Delta^l_\omega \cap \tilde U_{\Delta,\omega}(x,q_{\Delta^l}/2)) >0$ which implies that $\lambda(\localLSM{x}{q_{\Delta^l}}) = \lambda([I]) > 0$.

The restriction of $\beta$ to $[I]$ will be denoted by $\beta_I$. Finally let us denote by $\lambda^X$ the normalized Lebesgue measure on a Borel set $X$ of $\R^d$ with $\lambda(X) > 0$ and by $\lambda^\beta_y$ the normalized Lebesgue measure on $\beta(y)$ for $y \in \hat U$ induced by Euclidean structure. By Fubini's theorem we have
\begin{align} \label{eq:CondMeasures1}
0 < \lambda^{\hat U}\left([I]\right) = \int_{[I]} \lambda^\beta_z([I] \cap \beta(z)) \dx \lambda^{\hat U}(z) = \int_{[I]} \lambda^\beta_z(\beta_I(z)) \dx \lambda^{\hat U}(z).
\end{align}
Because the submanifolds $\{\beta(z)\}_{z \in \hat U}$ are transversal the absolute continuity theorem (Theorem \ref{thm:ACT} {\it ii)}) implies that under the Poincar\'e map $P_{\beta(z),\beta(y)}$ the measures $\lambda^\beta_z$ and $\lambda^\beta_y$ are absolutely continuous for all $y,z \in [I]$. Thus $\lambda^\beta_y(\beta_I(y)) >0$ if and only if $\lambda^\beta_z(\beta_I(z)) >0$ for all $y,z \in [I]$ hence \rref{eq:CondMeasures1} finally implies $\lambda^\beta_y(\beta_I(y)) >0$ for all $y \in [I]$. Hence we can define the measure $\lambda^{\beta_I}_y := \lambda^\beta_z / \lambda^\beta_z(\beta_I(z))$ for $z \in [I]$. By $\lambda^\alpha_z$ we will denote the normalized Lebesgue measure on $\alpha(z)$, $z \in [I]$ induced by the Euclidean structure.

\begin{theorem} \label{thm:AbsContCondMeasures}
Let $(\omega,x) \in \Delta^l$. Denote by $\left\{\lambda^{[I]}_{\alpha(z)}\right\}_{z\in [I]}$ the canonical system of conditional measures of $\lambda^{[I]}$ associated with the measurable partition $\alpha$. Then for $\lambda$-almost every $z \in [I]$ the measure $\lambda^{[I]}_{\alpha(z)}$ is equivalent to $\lambda^\alpha_z$, moreover, we have
\begin{align*}
R^{-1}_{\Delta^l} \leq \frac{\dx \lambda^{[I]}_\alpha(z)}{\dx \lambda^\alpha_z} \leq R_{\Delta^l}
\end{align*}
$\lambda^\alpha_z$-almost everywhere on $\alpha(z)$, where $R_{\Delta^l} >0$ is a number depending only on the set $\Delta^l$ but not on the individual $(\omega,x) \in \Delta^l$.
\end{theorem}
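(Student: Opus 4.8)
The plan is to deduce Theorem~\ref{thm:AbsContCondMeasures} from the absolute continuity theorem (Theorem~\ref{thm:ACT}~{\it ii)}) together with the abstract Proposition~\ref{prop:AbsCondMeasures} on conditional measures under absolutely continuous changes of measure. The geometric content that the holonomy (Poincar\'e) maps along the family $\collLSM{x}{q_{\Delta^l}}$ have Jacobians bounded in $[1/2,2]$ is already in hand; what remains is to translate this into a bound on the Radon--Nikodym derivative of the conditional measures of $\lambda^{[I]}$ along the partition $\alpha$ into local stable manifolds. The guiding idea is that the transversal submanifolds $\{\beta(z)\}_{z \in \hat U}$ foliate $[I]$ transversally to $\alpha$, so that the conditional measure of $\lambda^{[I]}$ on a leaf $\alpha(z)$ is obtained by ``pushing'' the transversal Lebesgue-type measures along the leaves via the Poincar\'e maps, and the distortion of this pushing is exactly what Theorem~\ref{thm:ACT} controls.

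First I would set up the two-parameter coordinate system on $[I]$: every point of $[I]$ lies on a unique leaf $\alpha(z) = W_{loc}(\omega,y)\cap\hat U$ and, within that leaf, is determined by which transversal $\beta(\cdot)$ it sits on; equivalently $[I] \cong I \times (\text{leaf direction})$ via the graphs representing the $W_{loc}$ and the $\beta$'s. Using the bound $\Lnorm[0]{D_\xi\phi}\le 1/3$ on the local stable manifolds and the bound $\norm{W^i}$ controlled by $\eps_{\Delta^l}$ on the transversals, the change of coordinates between this product chart and the Euclidean structure on $[I]$, on $\alpha(z)$, and on the $\beta(z)$ has Jacobian bounded above and below by a constant depending only on $\Delta^l$ (since all the relevant derivative bounds from Theorem~\ref{thm:localStableManifold} and the definition of $\delta_{\Delta^l}$ are uniform over $\Delta^l$). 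Next, I would express $\lambda^{[I]}$ in this chart via Fubini/the coarea-type formula: integrating a test function over $[I]$ against $\lambda^{[I]}$ equals, up to the bounded Jacobian above, integrating over $I$ (w.r.t.\ $\lambda^{\beta}_x$ restricted to $I$, say) the integral along each leaf; but the ``integral along each leaf'' is itself realized by transporting the transversal measure $\lambda^{\beta_I}_z$ from $\beta(z)\cap[I]$ onto the leaf via the Poincar\'e map $P_{\beta(x),\beta(z)}$ composed appropriately, whose Jacobian lies in $[1/2,2]$ by Theorem~\ref{thm:ACT}~{\it ii)}. Reading off the disintegration of $\lambda^{[I]}$ against $\alpha$ from this formula, and comparing with the normalized Euclidean measure $\lambda^\alpha_z$ on the leaf, gives $\dx\lambda^{[I]}_{\alpha(z)}/\dx\lambda^\alpha_z$ equal to a product of: a normalization factor, a factor from the coordinate-change Jacobian (bounded by a $\Delta^l$-constant), and a factor from the holonomy Jacobian (in $[1/2,2]$). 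Bounding each piece and absorbing everything into a single constant $R_{\Delta^l}$ finishes the argument. Proposition~\ref{prop:AbsCondMeasures} can be invoked to handle the passage between the measure written in the convenient chart and $\lambda^{[I]}$ itself, since these differ by a density bounded above and below.

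The main obstacle I expect is making the ``integrate along the leaves via holonomy'' step rigorous rather than heuristic: one must identify the canonical system of conditional measures $\{\lambda^{[I]}_{\alpha(z)}\}$ — which is defined abstractly by the disintegration property — with the concrete family obtained by pushing transversal measures along Poincar\'e maps, and verify measurability in $z$ of all the objects involved (the leaves, the Poincar\'e maps, their Jacobians). This is where one needs the continuity of the family $\{W_{loc}(\omega,y)\}$ in $y$ from Theorem~\ref{thm:localStableManifold} and the compactness of $\Delta^l$, so that the constants $\delta_{\Delta^l}, q_{\Delta^l}, \eps_{\Delta^l}$ and all derivative bounds are genuinely uniform. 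A secondary, purely bookkeeping, difficulty is tracking that the ratio $\lambda^\beta_z(\beta_I(z))$ used to normalize $\lambda^{\beta_I}_z$, and the analogous normalization of $\lambda^{[I]}_{\alpha(z)}$, stay bounded away from $0$ and $\infty$ uniformly — but this follows from \rref{eq:CondMeasures1}, the positivity $\lambda^\beta_y(\beta_I(y))>0$ already established, and the $[1/2,2]$ Jacobian bound, again with constants depending only on $\Delta^l$.
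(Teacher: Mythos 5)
Your proposal follows essentially the same strategy as the paper's proof, which simply defers to \citep[Theorem~III.6.1]{Liu95}: introduce the product-type coordinates via the $\alpha$ and $\beta$ foliations, express $\lambda^{[I]}$ by Fubini/disintegration in these coordinates, control the holonomy distortion via the $[1/2,2]$ Jacobian bounds of Theorem~\ref{thm:ACT}~\emph{ii)}, control the chart Jacobians uniformly over the compact $\Delta^l$, and invoke Proposition~\ref{prop:AbsCondMeasures} to pass between the chart measure and $\lambda^{[I]}$. The obstacles you flag (identifying the abstract canonical conditional measures with the concrete holonomy-transported ones, and the measurability bookkeeping) are genuine and are exactly what Liu--Qian's argument addresses, so the sketch is sound.
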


\begin{proof}
The proof can be found in \citep[Theorem III.6.1]{Liu95}. Since for fixed $(\omega,x) \in \Delta^l$ this is a local property, the the proof remains true in our situation. Let us remark that an essential part of the proof is the absolute continuity theorem (Theorem \ref{thm:ACT}).
\end{proof}

\section{Construction of the Partition} \label{sec:ConstructionPartition}

Recall that $\hat \Lambda_0 \subset \Omega^\N \times \R^d$ is the $F$-invariant set of full measure defined in \rref{eq:hatLambda0} then let us define
\begin{align*}
\hat\Lambda_1 := \{ (\omega,x) \in \hat\Lambda_0 : \lambda^{(1)}(x) < 0 \}
\end{align*}
and state two following definitions.

\begin{definition}
 A measurable partition $\eta$ of $\Omega^\N \times \R^d$ is said to be subordinate to $W^s$-submanifolds of $\mathcal{X}^+(\R^d, \nu,\mu)$, if for $\nu^\N \times \mu$-a.e. $(\omega,x)$, $\eta_\omega(x) := \{y : (\omega,y) \in \eta(\omega,x)\} \subset W^s(\omega,x)$ and it contains an open neighborhood of $x$ in $W^s(\omega,x)$, this neighborhood being taken in the submanifold topology of $W^s(\omega,x)$.
\end{definition}

\begin{definition}
We say that the Borel probability measure $\mu$ has absolutely continuous conditional measures on $W^s$-manifolds  of $\mathcal{X}^+(\R^d, \nu,\mu)$, if for any measurable partition $\eta$ subordinate to $W^s$-manifolds of $\mathcal{X}^+(\R^d, \nu,\mu)$ one has for $\nu^\N$-a.e. $\omega \in \Omega^\N$
\begin{align*}
\mu^{\eta_\omega}_x \ll \lambda^s_{(\omega,x)}, \qquad \mu-\text{a.e. } x \in \R^d
\end{align*}
where $\{\mu^{\eta_\omega}_x\}_{x \in \R^d}$ is a (essentially unique) canonical system of conditional measures of $\mu$ associated with the partition $\{\eta_\omega(x)\}_{x\in \R^d}$ of $\R^d$, and $\lambda^s_{(\omega,x)}$ is the Lebesgue measure on $W^s(\omega,x)$ induced by Euclidean structure as a submanifold of $\R^d$, where $\lambda^s_{(\omega,x)} = \delta_x$ if $(\omega,x) \notin \hat \Lambda_1$.
\end{definition}

Now we are able to state the main proposition, which yields a measurable partition $\eta$ with certain properties by which we are able to show the estimate of the entropy from below as presented in the next section.

\begin{proposition} \label{prop:mainprop}
Let $\mathcal{X}^+(\R^d, \nu, \mu)$ be given. Then there exists a measurable partition $\eta$ of $\Omega^\N \times \R^d$ which has the following properties:
\begin{enumerate}
\item $F^{-1}\eta \leq \eta$ and $\{\omega\} \times \R^d \leq \eta$;
\item $\eta$ is subordinate to $W^s$-manifolds of $\mathcal{X}^+(\R^d, \nu, \mu)$;
\item for every Borel set $B \in \mathcal{B}(\Omega^\N \times \R^d)$ the function
	\begin{align*}
	P_B(\omega, x) = \lambda^s_{(\omega,x)}(\eta_\omega(x) \cap B_\omega)
	\end{align*}
is measurable and $\nu^\N \times \mu$ almost everywhere finite, where $B_\omega := \{y: (\omega,y) \in B\}$ is the $\omega$-section of $B$;
\item if $\mu \ll \lambda$, then for $\nu^\N \times \mu$-a.e. $(\omega,x)$
	\begin{align*}
	\mu_x^{\eta_\omega} \ll \lambda^s_{(\omega,x)}.
	\end{align*}
\end{enumerate}
\end{proposition}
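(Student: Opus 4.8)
The plan is to adapt the classical construction of Ledrappier--Young (as presented in \citep[Chapter IV]{Liu95}) to our non-compact setting, building the partition $\eta$ by hand on each Pesin block $\Delta^l$ and then taking a suitable limit/union. First I would fix one of the exhausting compact sets $\Delta^l \subset \hat\Lambda_1$ (increasing to a full-measure set) and, for a density point $(\omega,x)$ of $\Delta^l$, use the local product structure from Section \ref{sec:AbsContCondMeasures}: the set $\localLSM{x}{q_{\Delta^l}}$ fibres over a transversal by the family of local stable manifolds $\collLSM{x}{q_{\Delta^l}}$, with the $C^1$-disc dependence guaranteed by Theorem \ref{thm:localStableManifold}. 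On such a neighbourhood define a partition whose atoms are the intersections of $\localLSM{x}{q}$ with the individual local stable leaves through points of $\Delta^l_\omega$; this is a measurable partition because the family is a continuous family of $C^1$ embedded discs. Call this $\eta_0$ on that chart.

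Next I would globalise. The genuine partition $\eta$ must satisfy $F^{-1}\eta \le \eta$, so one sets $\eta := \bigvee_{n\ge 0} F^n \eta_0$ refined appropriately, or more precisely one defines $\eta(\omega,x)$ to be the connected component of $x$ in $W^s(\omega,x) \cap (\text{the chosen chart around } F^{-n}(\omega,x) \text{ pulled forward})$ for the largest $n$ for which $F^{-n}(\omega,x)$ lands in the chart --- this is the standard device that simultaneously guarantees (i) and property (ii) (subordination to $W^s$): each atom sits inside a single global stable manifold because local stable leaves are mapped into local stable leaves by $f_n(\omega)$ (Theorem \ref{thm:localStableManifold} (ii)), and by Theorem \ref{thm:globalStableManifold} these assemble into the global stable manifold; and it contains a relative neighbourhood of $x$ because the local leaf through $x$ does, and forward iteration only enlarges (via $f_n(\omega)$) the portion of the global leaf one sees. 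Here I would invoke the fact, following from Lemma \ref{lem:ExistenceOfl} (iv) and Theorem \ref{thm:localStableManifold} (iv), that the relevant radii $\alpha_n,\beta_n,\gamma_n$ and Lyapunov-metric distortions grow only subexponentially (rate $\eps$), so that $\nu^\N\times\mu$-a.e.\ point returns to $\bigcup_l \Delta^l$ infinitely often with controlled geometry --- a Poincar\'e recurrence / Birkhoff argument using that $\prodm{\Delta\setminus\Delta^l}\to 0$.

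For property (iii), the measurability and a.e.-finiteness of $P_B(\omega,x) = \lambda^s_{(\omega,x)}(\eta_\omega(x)\cap B_\omega)$: measurability follows from the continuous (hence measurable) dependence of the discs $\theta_i(\cdot)B^k$ on the base point together with a monotone-class argument over $B$; finiteness follows because $\eta_\omega(x)$ is contained in a single $C^1$ $k$-disc of bounded size --- its $\lambda^s$-measure is bounded by a constant depending on $\Delta^l$ via the uniform Lipschitz bounds $\beta_n$ in Theorem \ref{thm:localStableManifold}, finite $\nu^\N\times\mu$-a.e.\ because a.e.\ point lies in some $\Delta^l$. Property (iv) is the payoff of the absolute-continuity machinery: given $\mu \ll \lambda$, write $\mu = g\,\lambda$; by Proposition \ref{prop:AbsCondMeasures} the conditional measures of $\mu$ on atoms of $\eta$ are absolutely continuous with respect to the conditional measures of $\lambda^{\hat U}$ on those atoms, and by Theorem \ref{thm:AbsContCondMeasures} the latter are equivalent (with uniformly bounded density $R_{\Delta^l}^{\pm1}$) to the induced Riemannian volume $\lambda^\alpha_z = \lambda^s_{(\omega,x)}$ on the leaves; chaining these gives $\mu_x^{\eta_\omega} \ll \lambda^s_{(\omega,x)}$.

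The main obstacle I expect is the globalisation step: making the ``largest $n$ with $F^{-n}(\omega,x)$ in the chart'' construction genuinely produce a \emph{measurable} partition of all of $\Omega^\N\times\R^d$ with $F^{-1}\eta\le\eta$, while controlling that the atoms do not shrink to points on a positive-measure set (which would break the ``contains an open neighborhood'' clause of (ii)). This requires the subexponential control of the Pesin-block geometry against the recurrence times, i.e.\ a careful bookkeeping of how $\delta_{\Delta^l}$, $q_{\Delta^l}$, and the leaf sizes $\alpha_n$ decay versus how deep into $F$-orbits one must go --- exactly the point where the non-compactness of $\R^d$ forces us to replace the uniform bounds of \citep{Liu95} by the integrability Assumptions \ref{ass1}--\ref{ass3} and the resulting Birkhoff-type estimates of Lemmas \ref{lem:ExistenceOfr} and \ref{lem:DerivativeEstimate}. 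Everything else is a transcription of \citep[Chapter IV]{Liu95}, where these quantities were bounded uniformly from the start.
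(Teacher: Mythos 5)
The overall plan you sketch — local stable discs on Pesin blocks $\Lambda_i$, a pullback refinement to get $F^{-1}\eta\le\eta$, Poincar\'e recurrence to globalise, and Theorem \ref{thm:AbsContCondMeasures} together with Proposition \ref{prop:AbsCondMeasures} for property (iv) — is the same route the paper takes, and your handling of (iii) and (iv) is essentially right. However, the globalisation step you identify as the obstacle is also where your description goes wrong, and in a way that matters. First, the displayed formula $\eta:=\bigvee_{n\ge0}F^n\eta_0$ has the direction reversed: the construction the paper uses is $\eta_r:=\bigl(\bigvee_{n\ge0}F^{-n}\xi_r\bigr)\vee\sigma_0$, i.e.\ one refines by pulling atoms back along the \emph{forward} orbit, which is exactly what makes $F^{-1}\eta_r\le\eta_r$ automatic. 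Second, your narrative ``the chart around $F^{-n}(\omega,x)$ pulled forward ... forward iteration only enlarges the portion of the global leaf one sees'' is the \emph{unstable}-manifold picture. On stable leaves, forward iteration \emph{contracts} (Theorem \ref{thm:localStableManifold}~(iii), eq.~\rref{eq:ConvOnStblManf}), and this contraction is precisely what saves the construction: the refinement $\bigcap_{n\ge0}F^{-n}\xi_r(F^n(\omega,x))$ does not shrink to a point because a small $d^s$-ball around $x$ in $W^s(\omega,x)$ has forward images of exponentially shrinking $d^s$-size, hence stays inside the corresponding chart atoms for all future visits to $S_r$ (this is the content of Corollary \ref{cor:ConclOfStblManf}~(iv) plus \rref{eq:ConvOnStblManf}). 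Your intuition would make the argument fail.

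Two further devices from the actual proof are absent from your outline and are needed to make it rigorous. One is the Fubini trick of varying the radius $r\in[r_i/2,r_i]$ and choosing an ``almost every'' $r$ for which $\xi_r$ is a genuine measurable partition whose boundary structure is negligible and for which properties (1)--(4) hold on $\hat S_r=\bigcup_{n\ge0}F^{-n}S_r$; you cannot simply fix a chart radius once and for all. The other is Lemma \ref{lem:BIinBs} ($\mathcal{B}^I\subset\mathcal{B}^s$ mod $0$): the $F$-invariant recurrence cores $I_n=\bigcap_l F^{-l}\hat S_n$ must be replaced, mod $0$, by sets that are unions of full stable leaves, so that restricting $\eta_n$ to $I_n$ (see \rref{eq:PreservingPropOfEta}) does not destroy the fibred structure. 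Only then does the final patched definition of $\eta$ — using $\hat\eta_n$ on $I_n\setminus\bigcup_{k<n}I_k$ and singletons elsewhere — inherit properties (1)--(4) from each chart. Without these two ingredients the globalisation you flag as the hard step does not close.
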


From Section \ref{eq:hatLambda0} we know that there exist countably many compact sets $\{\Lambda_i : \Lambda_i \subset \hat \Lambda_1\}_{i\in\N}$ such that $\nu^\N\times\mu(\hat\Lambda_1\!\setminus\!\bigcup_i\Lambda_i) = 0$ and each set $\Lambda_i$ is a set of type $\Delta^l$ as considered in Section \ref{sec:TheoremAbsoluteContinuity} and \ref{sec:AbsContCondMeasures} but with $\sTS{0} = \bigcup_{\lambda^{(j)}(x) < 0} V^{(j)}_{(\omega,x)}$ for each $(\omega,x) \in \Lambda_i$, i.e. $b = 0$. For $\Lambda_i \in \{\Lambda_i : i \in \N \}$ we will use the constants as in the previous sections, i.e. set $k_{\Lambda_i} := \dimension \sTS{0}$ for $(\omega,x) \in \Lambda_i$ and in the same way $A_{\Lambda_i}, \delta_{\Lambda_i}, q_{\Lambda_i}$ and so on. As in the previous sections we will denote the continuous family of $C^1$ embedded $k_{\Lambda_i}$-dimensional discs (the local stable manifolds) given by Theorem \ref{thm:localStableManifold} corresponding to $n=0$ by $\left\{W^s_{loc}(\omega,x)\right\}_{(\omega,x)\in\Lambda_i}$.

By Theorem \ref{thm:localStableManifold} there exist $\lambda_i > 0$ and $\gamma_i > 0$ such that for every $(\omega,x) \in \Lambda_i$, if $y,z \in W^s_{loc}(\omega,x)$ then for all $l \geq 0$ we have
\begin{align} \label{eq:ConvOnStblManf}
d^s(f^l_\omega y, f^l_\omega z) \leq \gamma_i e^{-\lambda_i l} d^s(y,z).
\end{align}

For $(\omega,x) \in \Lambda_i$ and $r > 0$ let us denote
\begin{align*}
B_{\Lambda_i}((\omega,x),r) := \left\{ (\omega',x') \in \Lambda_i : d(\omega,\omega') < r, \abs{x -x'} < r \right\},
\end{align*}
where as before $d$ denotes the metric on $\Omega^\N$ as introduced in Section \ref{sec:rds} and, to repeat, for $x \in \R^d$ and $(\omega,x) \in \Lambda_i$ respectively
\begin{align*}
B(x,r) &:= \{y \in \R^d : \abs{x -y} < r\}\\
\tilde U_{\Lambda_i,\omega}(x,r) &:= \exp_x \{\zeta \in T_x\R^d : \Lnorm[0]{\zeta} < r\}.
\end{align*}
Then we have the following corollary, which is an immediate consequence of Lemma \ref{lem:EstimatesOnLnorm} and Theorem \ref{thm:localStableManifold}.

\begin{corollary} \label{cor:ConclOfStblManf}
There exist numbers $r_i > 0$, $R_i > 0$ and $0 < \eps_i < 1$ such that the following hold true:
\begin{enumerate}
\item Let $(\omega,x) \in \Lambda_i$. If $(\omega',x') \in B_{\Lambda_i}((\omega,x),r_i)$ then
\begin{align*}
 B(x,r_i) \subset U_{\Lambda_i,\omega'}(x',q_{\Lambda_i}/2).
\end{align*}

\item For any $r \in [r_i/2, r_i]$ and each $(\omega,x) \in \Lambda_i$, if $(\omega', x') \in B_{\Lambda_i}((\omega,x),\eps_i r)$ then the local stable manifold $W^s_{loc}(\omega',x') \cap B(x,r)$ is connected and the map
\begin{align*}
 (\omega',x') \mapsto W^s_{loc}(\omega',x') \cap B(x,r)
\end{align*}
is continuous from $B_{\Lambda_i}((\omega,x),\eps_i r)$ to the space of subsets of $B(x,r)$ (endowed with the Hausdorff topology).

\item Let $r \in [r_i/2, r_i]$ and $(\omega, x) \in \Lambda_i$. If $(\omega',x'), (\omega', x'') \in B_{\Lambda_i}((\omega,x),\eps_i r)$ then either
\begin{align*}
 W^s_{loc}(\omega',x') \cap B(x,r) =  W^s_{loc}(\omega',x'') \cap B(x,r)
\end{align*}
or the two terms in the above equation are disjoint. In the latter case, if it is assumed moreover that $x'' \in W^s(\omega',x')$, then
\begin{align*}
 d^s(y,z) > 2r_i
\end{align*}
for any $y \in  W^s_{loc}(\omega',x') \cap B(x,r)$ and $z \in W^s_{loc}(\omega',x'') \cap B(x,r)$.

\item For each $(\omega,x) \in \Lambda_i$, if $(\omega',x') \in B_{\Lambda_i}((\omega,x),r_i)$ and $y \in W^s_{loc}(\omega',x') \cap B(x,r_i)$, then $W^s_{loc}(\omega',x')$ contains the close ball of center $y$ and $d^s$ radius $R_i$ in $W^s(\omega',x')$.
\end{enumerate}
\end{corollary}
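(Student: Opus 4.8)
\textbf{Proof proposal for Corollary \ref{cor:ConclOfStblManf}.}

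The plan is to derive all four statements from the quantitative estimates on the Lyapunov metric collected in Lemma \ref{lem:EstimatesOnLnorm} (especially the two-sided comparison $\tfrac12|\zeta| \le \Lnorm[0]{\zeta} \le A e^{0}|\zeta| = A|\zeta|$ at time $n=0$) together with the uniform geometric description of the family $\{W^s_{loc}(\omega,x)\}_{(\omega,x)\in\Lambda_i}$ furnished by Theorem \ref{thm:localStableManifold}: namely that it is a continuous family of $C^1$ embedded $k_{\Lambda_i}$-dimensional discs, that each $W^s_{loc}(\omega,x)$ is a graph over $\sTS{0}$ with uniformly bounded slope and size $\alpha_0$, and that it satisfies the contraction estimate \rref{eq:ConvOnStblManf}. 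The whole point is that on the compact set $\Lambda_i$ all the constants from those two results (the numbers $A_{\Lambda_i}$, $\alpha_0$, $\beta_0$, $\gamma_0$, $q_{\Lambda_i}$, $\delta_{\Lambda_i}$, the moduli of continuity of $(\omega,x)\mapsto \sTS{0}$ and $(\omega,x)\mapsto W^s_{loc}(\omega,x)$) are uniform, so one may choose a single small $r_i$ and a single $\eps_i\in(0,1)$ that work at every point.

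First I would prove i). Using $\tfrac12|\zeta|\le\Lnorm[0]{\zeta}\le A_{\Lambda_i}|\zeta|$ one sees that the Euclidean ball $B(x,r_i)$ is squeezed between two Lyapunov-metric balls centered at $x$; by continuity of the Lyapunov inner product and of the exponential charts on the compact set $\Lambda_i$, if $(\omega',x')$ is within $r_i$ of $(\omega,x)$ then $\Lnorm[0]{\cdot}$ and $\LnormAt[0]{x'}{\cdot}$ are comparable up to a factor close to $1$ and $|x-x'|<r_i$, so for $r_i$ small enough $B(x,r_i)\subset \tilde U_{\Lambda_i,\omega'}(x',q_{\Lambda_i}/2)$. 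For ii), fix $r\in[r_i/2,r_i]$. Since $W^s_{loc}(\omega',x')$ is a graph over $\sTS[\omega',x']{0}$ of a function of small slope and $\sTS[\omega',x']{0}$ depends continuously on $(\omega',x')$, the slice $W^s_{loc}(\omega',x')\cap B(x,r)$ is a connected graph-piece provided $(\omega',x')$ stays in a small enough neighborhood $B_{\Lambda_i}((\omega,x),\eps_i r)$ of $(\omega,x)$; continuity of the family of discs in the Hausdorff topology is then inherited from Theorem \ref{thm:localStableManifold}. Statement iv) is the statement that, shrinking $r_i$ (hence making $B(x,r_i)$ small compared with $\alpha_0$), every point $y$ of the slice still has a full $d^s$-ball of some fixed radius $R_i>0$ inside $W^s_{loc}(\omega',x')$ — this follows because the disc has uniform ``radius'' $\alpha_0$ in the Lyapunov metric and $y$ is uniformly close to its center $f^0_\omega x' = x'$.

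The step I expect to be the main obstacle is iii), the dichotomy: two local stable manifolds through nearby base points either coincide on $B(x,r)$ or are separated by $d^s$-distance more than $2r_i$. The idea is: if $W^s_{loc}(\omega',x')$ and $W^s_{loc}(\omega',x'')$ meet inside $B(x,r)$, then (being graphs of small slope over the same subspace $\sTS[\omega',x']{0}$, using ii)) they must coincide on the whole slice — this is the uniqueness/graph-transversality part. If they are disjoint and moreover $x''\in W^s(\omega',x')$, then $x'$ and $x''$ lie on the same global stable manifold; apply the contraction \rref{eq:ConvOnStblManf} along $W^s(\omega',x')$: the forward orbits of $x'$ and $x''$ get exponentially close, yet if their $d^s$-distance were $\le 2r_i$ they would already lie in a common local chart where the local stable manifold is unique, forcing coincidence — contradiction. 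Quantitatively one has to track that $d^s(x',x'')$ small implies $x''\in W^s_{loc}(\omega',x')$, which is exactly the local uniqueness of stable manifolds; choosing $r_i$ smaller than the injectivity/uniqueness scale coming from $\alpha_0,\beta_0,\gamma_0$ makes the threshold $2r_i$ legitimate. Carrying out this separation argument carefully — matching the scale $2r_i$ against the uniform constants of Theorem \ref{thm:localStableManifold} — is the delicate bookkeeping; everything else is a routine compactness-and-continuity argument on $\Lambda_i$.
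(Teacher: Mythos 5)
Your proposal follows essentially the same route as the paper, which proves this corollary in one line by deducing i) from Lemma \ref{lem:EstimatesOnLnorm} and ii)--iv) from Theorem \ref{thm:localStableManifold} together with the uniform graph representation guaranteed by the choice of $q_{\Lambda_i}$, with compactness of $\Lambda_i$ supplying the uniformity of all constants; your filling-in of these details is aligned with that plan. One small correction on iii): the appeal to the forward contraction estimate \rref{eq:ConvOnStblManf} is a detour that does no work in the separation argument. What actually closes it is iv) itself: once each $y$ in the slice $W^s_{loc}(\omega',x')\cap B(x,r)$ carries a full $d^s$-ball of radius $R_i$ inside $W^s_{loc}(\omega',x')$, shrinking $r_i$ so that $2r_i < R_i$ forces any $z$ with $d^s(y,z)\le 2r_i$ to lie in $W^s_{loc}(\omega',x')\cap B(x,r)$, contradicting disjointness; the dichotomy is thus graph-uniqueness (coincidence case) plus the uniform $d^s$-radius from iv) (separation case), no dynamics needed.
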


\begin{proof}
Property {\it i)} is an immediate consequence of Lemma \ref{lem:EstimatesOnLnorm}. Where as properties {\it ii) - iv)} follows directly from Theorem \ref{thm:localStableManifold} and the choice of $q_{\Lambda_i}$ in Section \ref{sec:TheoremAbsoluteContinuity}.
\end{proof}

For the proof of Proposition \ref{prop:mainprop} we need some characterization of the $F$-invariant sets in terms of stable manifolds. Let us define
\begin{align*}
\mathcal{B}^s := \left\{B \in \mathcal{B}_{\nu^\N \times \mu}(\Omega^\N \times \R^d) : B = \bigcup_{(\omega,x) \in B} \{\omega\} \times W^s(\omega,x)\right\},
\end{align*}
where $\mathcal{B}_{\nu^\N \times \mu}(\Omega^\N \times \R^d)$ is the completion of $\mathcal{B}(\Omega^\N \times \R^d)$ with respect to $\nu^\N \times \mu$. Further denote the $\sigma$-algebra of $F$-invariant sets by
\begin{align*}
\mathcal{B}^I := \left\{ A \in \mathcal{B}_{\nu^\N \times \mu}(\Omega^\N \times \R^d) : F^{-1}A = A \right\}.
\end{align*}

Then we have the following lemma, which is \citep[Lemma IV.2.2]{Liu95} and states that every $F$-invariant set is basically a union of global stable manifolds.

\begin{lemma} \label{lem:BIinBs}
We have $\mathcal{B}^I \subset \mathcal{B}^s, \nu^\N \times \mu$-mod $0$.
\end{lemma}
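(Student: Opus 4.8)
The plan is to show that any $F$-invariant set $A$ agrees, modulo a $\nu^\N\times\mu$-null set, with a set $B\in\mathcal B^s$, i.e. a set that is saturated by global stable manifolds $\{\omega\}\times W^s(\omega,x)$. The key structural fact I would exploit is that points on a common stable manifold have forward orbits that converge exponentially: if $y\in W^s(\omega,x)$ then $\frac1n\log\abs{f^n_\omega x-f^n_\omega y}<0$, and by Theorem \ref{thm:globalStableManifold} this even holds for the intrinsic distance $d^s$ along $f^n_\omega W^{s}(\omega,x)$. Consequently, if a point belongs to $A$ then, since $A=F^{-1}A=F^{-n}A$ for all $n$, its forward iterates stay in the (fixed $\omega$-fibered) image of $A$, and the same must then be forced for nearby points on the same stable manifold. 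Making ``forced'' precise is the content of the argument.

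First I would reduce to the relevant part of the phase space: outside $\hat\Lambda_1$ (where $\lambda^{(1)}(x)\ge 0$) the global stable manifold is a single point, so $\mathcal B^s$-saturation is vacuous there and any set is trivially of the required form; hence it suffices to work on $\hat\Lambda_1$, which we have decomposed as a countable union of compact Pesin-type blocks $\Lambda_i$ of the form $\Delta^l$ (with $b=0$), up to a null set. Second, on each such block I would use Corollary \ref{cor:ConclOfStblManf}: for $(\omega',x')$ in a small ball $B_{\Lambda_i}((\omega,x),\eps_i r)$ the local stable manifolds $W^s_{loc}(\omega',x')\cap B(x,r)$ either coincide or are $d^s$-separated by more than $2r_i$, and they depend continuously on the base point. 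Third — the heart of the matter — I would take a density/Lebesgue point argument: fix a set $A\in\mathcal B^I$ with $\nu^\N\times\mu(A)>0$, pass to $\mu\ll\lambda$'s absolutely continuous part, and show that for $\nu^\N$-a.e.\ $\omega$ and $\lambda$-a.e.\ density point $x$ of $A_\omega$, the entire plaque $W^s_{loc}(\omega,x)\cap B(x,r_i)$ lies in $A_\omega$ mod zero. The mechanism: apply $F^n$; by Theorem \ref{thm:AbsContCondMeasures} (absolute continuity of conditional measures of Lebesgue along the stable foliation, with uniform Jacobian bounds $R_{\Delta^l}^{\pm1}$) together with the bounded-distortion Poincaré-map estimate of Theorem \ref{thm:ACT}(ii), the conditional density of $A_\omega$ along the stable plaque is, up to the fixed multiplicative constant, the same at $x$ as at any other point of the plaque; since $x$ is a density point this conditional density is full, hence it is full (mod zero) on the whole plaque. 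Iterating and taking unions over $i$ and over the $F^{-n}$-refinements propagates this to the global stable manifolds and yields a set $B\in\mathcal B^s$ with $B=A$ mod $\nu^\N\times\mu$.

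The main obstacle I anticipate is exactly this density-point step: controlling how the Lebesgue measure of $A_\omega$ distributes transversally to the stable foliation, so that "$x$ is a density point of $A_\omega$" can be upgraded to "$A_\omega$ contains a.e.\ the stable plaque through $x$". This is where one must combine Fubini in the transversal/stable splitting (as set up before Theorem \ref{thm:AbsContCondMeasures}), the equivalence $\lambda^{[I]}_{\alpha(z)}\approx\lambda^\alpha_z$ with uniform constants, and the invariance $F^{-1}A=A$ to transport the estimate along the orbit while the plaques are being exponentially contracted by \rref{eq:ConvOnStblManf}; the continuity and disjointness dichotomy from Corollary \ref{cor:ConclOfStblManf} is what prevents the saturated set from being spoiled on the overlaps of the blocks $\Lambda_i$. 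Everything else — the reduction to $\hat\Lambda_1$, the countable exhaustion, passing to completions mod $0$ — is routine bookkeeping. Since this is \citep[Lemma IV.2.2]{Liu95} transported to $\R^d$, I would in the write-up simply indicate that the proof of Liu–Qian goes through verbatim once Theorems \ref{thm:localStableManifold}, \ref{thm:globalStableManifold}, \ref{thm:ACT} and \ref{thm:AbsContCondMeasures} and Corollary \ref{cor:ConclOfStblManf} are available, and refer to \citep[Lemma IV.2.2]{Liu95} for the details.
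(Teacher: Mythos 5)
Your proposal takes a genuinely different route from the paper's proof, and the paper's route is substantially simpler. What the paper actually does is a Birkhoff/$L^2$ (Hopf-type) argument that does not touch the absolute continuity machinery at all: it fixes a countable set $\mathcal F$ of compactly supported continuous functions $g_i(\omega,x)$ depending only on $x$, dense in $L^2(\Omega^\N\times\R^d,\Omega^\N\times\mathcal B_\mu(\R^d),\nu^\N\times\mu)$, and applies Birkhoff's ergodic theorem to each $g_i$. Since for $(\omega,y),(\omega,z)$ on the same stable manifold $\abs{f^n_\omega y-f^n_\omega z}\to 0$ and each $g_i$ is uniformly continuous with compact support, the time averages $\E{g_i\mid\mathcal B^I}$ agree on such pairs, i.e. each $\E{g_i\mid\mathcal B^I}$ is $\mathcal B^s$-measurable on a full-measure set. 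Because $\mathcal B^I$-measurable functions depend only on $x$, the family $\{\E{g_i\mid\mathcal B^I}\}$ is dense in $L^2(\mathcal B^I)$, so $L^2(\mathcal B^I)\subset L^2(\mathcal B^s)$ and $\mathcal B^I\subset\mathcal B^s$ mod $0$. This argument is elementary and does not require $\mu\ll\lambda$, nor Theorems \ref{thm:ACT}, \ref{thm:AbsContCondMeasures}, nor the Pesin-block exhaustion, nor Corollary \ref{cor:ConclOfStblManf}.

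By contrast, your saturation-via-density-points strategy is not wrong in spirit (it is a standard device in non-uniformly hyperbolic ergodic theory), but it buys you nothing here and it imports extra hypotheses and genuine technical difficulty. Concretely: (a) you restrict to $\mu\ll\lambda$ even though the lemma as stated has no such hypothesis and is used before that assumption is in force; (b) the central step — upgrading ``$x$ is a Lebesgue density point of $A_\omega$'' to ``$W^s_{loc}(\omega,x)$ lies in $A_\omega$ mod $\lambda^s$'' — is underspecified and, as written, close to circular. The absolute continuity theorems control holonomy between transversals and conditional measures on plaques; they do not by themselves say that the transversal conditional of an $F$-invariant set is $\{0,1\}$-valued on plaques (which is essentially the saturation you are trying to prove). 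One would need a genuine Hopf-type ingredient — some quantity constant along stable plaques — to close the loop, at which point one is back to the Birkhoff argument. Also, the sentence that ``the conditional density of $A_\omega$ along the stable plaque is the same at $x$ as at any other point of the plaque'' is either trivial (same plaque) or does not assert what you need (constancy across plaques in a foliated chart). Finally, when you suggest referring to Liu--Qian with the remark that their proof goes through ``once the absolute continuity theorems are available,'' note that their proof of this particular lemma does not use those theorems; it is the Birkhoff/$L^2$ argument, and the only thing that needs adapting to $\R^d$ is replacing ``continuous functions'' by ``continuous functions of compact support'' so that uniform continuity and $L^2$-density still hold.
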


\begin{proof}
The proof of \citep[Lemma III.2.2]{Liu95} is adapted to the case of $\R^d$, but follows along the same line. Put $\Omega^\N \times \mathcal{B}_\mu(\R^d) := \{\Omega^\N \times B: B \in \mathcal{B}_\mu(\R^d)\}$ where $\mathcal{B}_\mu(\R^d)$ is the completion of $\mathcal{B}(\R^d)$ with respect to $\mu$. Since the infinitely often differentiable functions with compact support on $\R^d$ are dense in $L^2(\R^d, \mathcal{B}(\R^d), \mu)$ and build a separable space there exists a countable set
\begin{align*}
\mathcal{F} := \{g_i: \text{the map } &g_i :\Omega^\N \times \R^d \to \R \text{ is continuous with compact support and}\\
&g_i(\omega,x) \text{ depends only on $x$ for each } (\omega,x) \in \Omega^\N \times \R^d, i \in \N\},
\end{align*}
which is dense in $L^2(\Omega^\N\times \R^d, \Omega^\N \times \mathcal{B}_\mu(\R^d),\nu^\N \times \mu)$. By Birkhoff's ergodic theorem for each $g_i \in \mathcal{F}$ there exists a set $\Lambda_{g_i} \in \mathcal{B}^I$ with $\nu^\N \times \mu(\Lambda_{g_i}) = 1$ such that for all $(\omega,x) \in \Lambda_{g_i}$ we have
\begin{align*}
\lim_{n\to\infty} \frac{1}{n} \sum_{k=0}^{n-1} g_i \circ F^k(\omega,x) = \E{g_i \big| \mathcal{B}^I}(\omega,x).
\end{align*}
Denote $\Lambda_\mathcal{F} := \bigcap_i \Lambda_{g_i}$. For two points $(\omega,y),(\omega,z) \in \Lambda_\mathcal{F}$ belonging to the same stable manifold, i.e. there exists $(\omega,x)$ such that $(\omega,y),(\omega,z) \in \{\omega\} \times W^s(\omega,x)$, we have $\lim_{n\to\infty} \abs{f^n_\omega y - f^n_\omega z} = 0$. Thus for any $\eps > 0$ and any $g_i \in \mathcal{F}$ there exists some compact set $C \subset \R^d$ and $\delta > 0$ such that $g_i\big|_{C^c} =0$ and $\abs{z-y} \leq \delta$ implies $\abs{g_i(z)-g_i(y)} \leq \eps$. Hence there exists $N \in \N$ such that we have
\begin{align*}
&\abs{\E{g_i \big| \mathcal{B}^I}(\omega,y) - \E{g_i \big| \mathcal{B}^I}(\omega,z)}
= \lim_{n\to\infty} \abs{\frac{1}{n} \sum_{k=0}^{n-1} \big(g_i(F^k(\omega,y)) - g_i(F^k(\omega,z))\big)}\\
&\hspace{10ex}\leq \lim_{n\to\infty} \frac{1}{n} \sum_{k=0}^{N-1} \abs{g_i(F^k(\omega,y)) - g_i(F^k(\omega,z))} + \lim_{n\to\infty} \frac{n-N}{n}\eps\\
&\hspace{10ex}= \eps.
\end{align*}
Since $\eps > 0$ can be chosen arbitrarily small we have $\E{g_i \big| \mathcal{B}^I}(\omega,y) = \E{g_i \big| \mathcal{B}^I}(\omega,z)$ for $(\omega,y)$ and $(\omega,z)$ on the same stable manifold.
Hence for all $i \in \N$ the conditional expectation $\E{g_i \big| \mathcal{B}^I}\!\!\big|_{\Lambda_\mathcal{F}}$ restricted to $\Lambda_\mathcal{F}$ is measurable with respect to $\mathcal{B}^s|_{\Lambda_\mathcal{F}}$, which implies
\begin{align} \label{eq:BIinBs}
\left\{\E{g_i \big| \mathcal{B}^I}\!\!\big|_{\Lambda_\mathcal{F}} : g_i \in \mathcal{F}\right\} \subset L^2(\Lambda_\mathcal{F}, \mathcal{B}^s|_{\Lambda_\mathcal{F}}, \nu^\N \times \mu).
\end{align}
Since the functions that are invariant with respect to $F$ do not depend on $\omega$ (see \citep[Corollary I.1.1]{Liu95}) we have
\begin{align*}
L^2(\Omega^\N\times \R^d, \mathcal{B}^I,\nu^\N \times \mu) \subset L^2(\Omega^\N\times \R^d, \Omega^\N \times \mathcal{B}_\mu(\R^d),\nu^\N \times \mu).
\end{align*}
Since $\mathcal{F}$ is a dense subset of the right-hand space and the conditional expectation can be seen as an orthogonal projection we have that $\left\{\E{g_i\big|\mathcal{B}^I} : g_i\in \mathcal{F}\right\}$ is dense in $L^2(\Omega^\N\times \R^d, \mathcal{B}^I,\nu^\N \times \mu)$. Then from \rref{eq:BIinBs} it follows that
\begin{align*}
L^2(\Lambda_\mathcal{F}, \mathcal{B}^I|_{\Lambda_\mathcal{F}}, \nu^\N \times \mu) \subset L^2(\Lambda_\mathcal{F}, \mathcal{B}^s|_{\Lambda_\mathcal{F}}, \nu^\N \times \mu),
\end{align*}
which implies since $\nu^\N \times \mu(\Lambda_\mathcal{F}) =1$ the desired, i.e.
\begin{align*}
 \mathcal{B}^I \subset \mathcal{B}^s, \nu^\N \times \mu\text{-mod } 0.
\end{align*}
\end{proof}

Let us now state the a sketch of the proof of Proposition \ref{prop:mainprop}, which is \citep[Proposition IV.2.1]{Liu95}, in particular the construction of the partition $\eta$.

\begin{proof}[Proof of Proposition \ref{prop:mainprop}]
{\it Step 1.} Let $\Lambda_i \in \left\{ \Lambda_i, i \in \N \right\}$ be arbitrarily fixed and choose the constants $\eps_{i}, r_{i}$ and $R_{i}$ according to Corollary \ref{cor:ConclOfStblManf}. Since $\Lambda_i$ is compact, the open cover $\left\{ B_{\Lambda_i}((\omega,x),\eps_i r_i/2) \right\}_{(\omega,x)\in \Lambda_i}$ has a finite subcover $\mathcal{U}_{\Lambda_i}$ of $\Lambda_i$. Let us fix arbitrarily $B_{\Lambda_i}((\omega_0,x_0),\eps_i r_i/2) \in \mathcal{U}_{\Lambda_i}$. For each $r \in [r_i/2,r_i]$ we define
\begin{align*}
S_r := \bigcup_{(\omega,x) \in B_{\Lambda_i}((\omega_0,x_0),\eps_i r)} \left\{\{\omega\} \times [W^s_{loc}(\omega,x) \cap B(x_0,r)]\right\}. %: (\omega,x) \in B_{\Lambda_i}((\omega_0,x_0),\eps_i r) \right\}.
\end{align*}
Denote by $\xi_r$ the partition of $\Omega^\N \times \R^d$ into all sets $\{\omega\} \times [W^s_{loc}(\omega,x) \cap B(x_0,r)]$, $(\omega,x) \in B_{\Lambda_i}((\omega_0,x_0),\eps_i r)$ and the set $\Omega^\N \times \R^d\!\setminus\! S_r$. By $ii)$ and $iii)$ of Corollary \ref{cor:ConclOfStblManf} one sees that $\xi_r$ is a partition and by the continuity property of the local stable manifolds that it is even a measurable partition. Now put
\begin{align*}
\eta_r := \left( \bigvee_{n=0}^{+\infty} F^{-n}\xi_r \right) \vee \left\{\{\omega\} \times \R^d : \omega \in \Omega^\N\right\}.
\end{align*}
One can see (\citep[Proof of IV.2.1]{Liu95}) that for almost every $r\in[r_i/2,r_i]$ the partition $\eta_r$ has the following properties:
\begin{enumerate}
\item[(1)] $F^{-1}\eta_r \leq \eta_r$ and $\left\{\{\omega\} \times \R^d : \omega \in \Omega^\N\right\} \leq \eta_r$;
\item[(2)] Put $\hat S_r = \bigcup_{n=0}^{+\infty} F^{-n}S_r$. Then for $\nu^\N \times \mu$-a.e. $(\omega,y) \in \hat S_r$ we have $(\eta_r)_\omega(y) := \{z : (\omega,z) \in \eta_r(\omega,y)\} \subset W^s(\omega,y)$ and it contains an open neighborhood of $y$ in $W^s(\omega,y)$;
\item[(3)] For any $B \in \mathcal{B}(\Omega^\N \times \R^d)$ the function
\begin{align*}
P_B(\omega,y) = \lambda^s_{(\omega,y)}((\eta_r)_\omega(y) \cap B_\omega)
\end{align*}
is measurable and finite $\nu^\N \times \mu$-a.e. on $\hat S_r$;
\item[(4)] Define $\hat\eta_r = \eta_r\big|_{\hat S_r}$ and for $\omega \in \Omega^\N$ let $\{\mu_{(\hat\eta_r)_\omega(y)}\}_{y\in (\hat S_r)_\omega}$ be a canonical system of conditional measures of $\mu\big|_{(\hat S_r)_\omega}$ associated with the partition $(\hat\eta_r)_\omega$. If $\mu \ll \lambda$ then for $\nu^\N$-a.e. $\omega \in \Omega^\N$ it holds that
\begin{align*}
\mu_{(\hat\eta_r)_\omega(y)} \ll \lambda^s_{(\omega,y)} \quad \mu\text{-a.e. } y \in (\hat S_r)_\omega.
\end{align*}
\end{enumerate}
Let us remark that for the proof of property (4) Theorem \ref{thm:AbsContCondMeasures} is the essential part.

{\it Step 2.} 
Let us notice that Step 1 works for any $\Lambda^i$ and any set in $\mathcal{U}_{\Lambda^i}$. So let us denote $\bigcup_{i=1}^{+\infty} \mathcal{U}_{\Lambda^i} = \{U_1,U_2,U_3,\dots\}$ and for each $U_n$ we will denote the partition $\eta_r$ satisfying (1)-(4) from Step 1 by $\eta_n$ and the associated set $\hat S_r$ by $\hat S_n$. Define for each $n \geq 0$ the set $I_n := \bigcap_{l=1}^{+\infty} F^{-l}\hat S_n$. Then we have
\begin{align*}
I_n = \bigcap_{l =1}^{+\infty} \bigcup_{k\geq l} F^{-k} S_n
\end{align*}
and thus clearly $F^{-1} I_n = I_n$. The Poincar{\'e} recurrence theorem implies $\nu^\N\times\mu(\hat\Lambda_1\setminus \bigcup_{n=1}^{+\infty} I_n) = 0$. Because of Lemma \ref{lem:BIinBs} we can and will assume that $I_n \in \mathcal{B}^s$. If this is not the case we would proceed with $I_n' \in \mathcal{B}^s$ such that $F^{-1} I_n' = I_n'$ and $\nu^\N\times\mu(I_n \triangle I_n') = 0$. So let us now define $\hat \eta_n := \eta_n|_{I_n}$. Since $I_n \in \mathcal{B}^s$ we have
\begin{align*}
I_n = \bigcup_{(\omega',x')\in I_n} \{\omega'\}\times W^s(\omega',x').
\end{align*}
and thus
\begin{align} \label{eq:PreservingPropOfEta}
\hat \eta_n = \left\{\eta_n(\omega,x) \cap I_n\right\}_{(\omega,x)\in I_n}
= \left\{\eta_n(\omega,x) \cap \{\omega\}\times W^s(\omega,x)\right\}_{(\omega,x)\in I_n},
\end{align}
which implies that $\hat \eta_n$ preserves the structure of $\eta_n$ as constructed in Step 1. So let us define finally the partition $\eta$ of $\Omega^\N \times \R^d$ by
\begin{align*}
\eta(\omega,x) =
\begin{cases}
\hat \eta_1(\omega,x), &\quad \text{if } (\omega,x) \in I_1 \\
\hat \eta_n(\omega,x), &\quad \text{if } (\omega,x) \in I_n\! \setminus \!\bigcup_{k=1}^{n-1} I_k \\
\{(\omega,x)\}, &\quad \text{if } (\omega,x) \in \Omega^\N \times \R^d \!\setminus\! \bigcup_{n=1}^{+\infty} I_n \\
\end{cases}
\end{align*}
Because by \rref{eq:PreservingPropOfEta} we have for $(\omega,x) \in I_n\!\setminus\! \bigcup_{k=1}^{n-1} I_k$ for some $n\geq 1$ that $\eta(\omega,x) = \eta_n(\omega,x)$ and thus clearly satisfies property (1) and properties (2)-(4) on $I_n$ instead of $\hat S_r$. Since $\nu^\N\times\mu(\hat \Lambda_1\!\setminus\! \bigcup_{n=1}^{+\infty} I_n) = 0$ and for $(\omega,x)\notin\hat\Lambda_1$ we defined $W^s(\omega,x) = \{x\}$ and $\lambda^s_{(\omega,x)} = \delta_x$ the properties of Proposition \ref{prop:mainprop} are satisfied $\nu^\N\times\mu$-almost everywhere, which completes the proof.
\end{proof}

By Property iii) of Proposition \ref{prop:mainprop} we can define as in \citep[Section IV.2]{Liu95} a Borel measure $\lambda^*$ on $\Omega^\N\times\R^d$ by
\begin{align*}
\lambda^*(K) := \int \lambda^s_{(\omega,x)}(\eta_\omega(x) \cap K_\omega)\, \dx \nu^\N\times\mu(\omega,x)
\end{align*}
for any $K \in \mathcal{B}(\Omega^\N\times\R^d)$. One can easily see that $\lambda^*$ is a $\sigma$-finite measure. By definition of the canonical system of conditional measures we have
\begin{align*}
\nu^\N \times \mu(K) = \int \mu^{\eta_\omega}_x(\eta_\omega(x) \cap K_\omega)\, \dx\nu^\N\times\mu(\omega,x)
\end{align*}
for each $K \in \mathcal{B}(\Omega^\N\times\R^d)$. Since by Property iv) of Propostion \ref{prop:mainprop} for $\nu^\N\times\mu$-almost every $(\omega,x) \in \Omega^\N\times\R^d$ we have $\mu^{\eta_\omega}_x\ll\lambda^s_{(\omega,x)}$ we get
\begin{align*}
\nu^\N\times\mu \ll \lambda^*.
\end{align*}
So let us define
\begin{align*}
g := \frac{\dx \nu^\N \times \mu}{\dx \lambda^*}.
\end{align*}
Then we have the following proposition, which is \citep[Proposition IV.2.2]{Liu95}.

\begin{proposition} \label{prop:PropoertieOfg}
For $\nu^\N\times\mu$-almost every  $(\omega,x)$, we have
\begin{align}
g = \frac{\dx \mu^{\eta_\omega}_x}{\dx \lambda^s_{(\omega,x)}}
\end{align}
$\lambda^s_{(\omega,x)}$-a.e. on $\eta_\omega(x)$.
\end{proposition}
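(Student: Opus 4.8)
The plan is to unwind the two integral representations of $\nu^\N\times\mu$ — one via the canonical conditional measures $\mu^{\eta_\omega}_x$ and one via the definition of $\lambda^*$ together with the Radon--Nikodym density $g$ — and to conclude that the two families of conditional measures $\{\mu^{\eta_\omega}_x\}$ and $\{g\,\lambda^s_{(\omega,x)}\}$ (restricted to $\eta_\omega(x)$) coincide for $\nu^\N\times\mu$-a.e.\ $(\omega,x)$. The key point is an essential-uniqueness statement for canonical systems of conditional measures: once we exhibit a measurable family of measures on the atoms $\eta_\omega(x)$ that disintegrates $\nu^\N\times\mu$ with respect to the partition $\eta$, it must agree a.e.\ with $\{\mu^{\eta_\omega}_x\}$.

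First I would fix the partition $\eta$ of $\Omega^\N\times\R^d$ from Proposition \ref{prop:mainprop} and recall that, since $\{\omega\}\times\R^d\leq\eta$, every atom $\eta(\omega,x)$ is of the form $\{\omega\}\times\eta_\omega(x)$ with $\eta_\omega(x)\subset W^s(\omega,x)$. By Property iv) of Proposition \ref{prop:mainprop} we have $\mu^{\eta_\omega}_x\ll\lambda^s_{(\omega,x)}$ for $\nu^\N\times\mu$-a.e.\ $(\omega,x)$, which is exactly what makes $\nu^\N\times\mu\ll\lambda^*$ and hence $g=\dx(\nu^\N\times\mu)/\dx\lambda^*$ well defined. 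Next I would compute, for an arbitrary $K\in\mathcal{B}(\Omega^\N\times\R^d)$,
\begin{align*}
\nu^\N\times\mu(K)
&= \int g\,\dx\lambda^*
= \int \left(\int_{\eta_\omega(x)} g(\omega,y)\,\dx\lambda^s_{(\omega,x)}(y)\,\mathbf{1}_{K}(\omega,y)\text{-type term}\right)\dx\nu^\N\times\mu(\omega,x),
\end{align*}
where the inner integral is obtained by inserting the definition of $\lambda^*$ and using that $g$ is defined on the total space; more precisely, by the disintegration underlying $\lambda^*$ one rewrites $\int g\,\dx\lambda^*$ as $\int\big(\int_{\eta_\omega(x)\cap K_\omega} g\,\dx\lambda^s_{(\omega,x)}\big)\,\dx\nu^\N\times\mu(\omega,x)$. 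Comparing this with the defining identity $\nu^\N\times\mu(K)=\int\mu^{\eta_\omega}_x(\eta_\omega(x)\cap K_\omega)\,\dx\nu^\N\times\mu(\omega,x)$ and letting $K$ range over a countable generating algebra (in particular over sets of the form $(\text{$\eta$-set})\cap(\text{rectangle})$ so that the $\eta$-atom structure is respected), I obtain that for $\nu^\N\times\mu$-a.e.\ $(\omega,x)$ the two measures $\mu^{\eta_\omega}_x(\,\cdot\,)$ and $\int_{\,\cdot\,}g\,\dx\lambda^s_{(\omega,x)}$ agree on $\eta_\omega(x)$; since both are probability measures on the atom, this is the claimed identity $g=\dx\mu^{\eta_\omega}_x/\dx\lambda^s_{(\omega,x)}$.

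The step I expect to be the main obstacle is the careful justification that $g$, a Radon--Nikodym derivative defined $\lambda^*$-a.e.\ on the whole space, may be restricted to $\lambda^s_{(\omega,x)}$-almost every atom $\eta_\omega(x)$ in a way that is compatible with the disintegration — i.e.\ the interchange of the ambient Radon--Nikodym derivative with the fiberwise one. This requires invoking the essential uniqueness of canonical systems of conditional measures for the Lebesgue space $(\Omega^\N\times\R^d,\nu^\N\times\mu)$ and the partition $\eta$, together with the fact (guaranteed by Property iii) of Proposition \ref{prop:mainprop}) that $(\omega,x)\mapsto\lambda^s_{(\omega,x)}(\eta_\omega(x)\cap B_\omega)$ is measurable, so that $\lambda^*$ genuinely admits the family $\{\lambda^s_{(\omega,x)}\}$ as its disintegration along $\eta$. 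Once these measurability and $\sigma$-finiteness bookkeeping points are in place, the identification of the densities is a direct application of Proposition \ref{prop:AbsCondMeasures} (with $\nu$ there replaced by $\lambda^*$, $\hat\nu$ by $\nu^\N\times\mu$, and $\alpha$ by $\eta$), whose conclusion $\dx\hat\nu_{\alpha(x)}/\dx\nu_{\alpha(x)} = g|_{\alpha(x)}/\int_{\alpha(x)} g\,\dx\nu_{\alpha(x)}$ specializes here, because $\nu_{\alpha(x)}=\lambda^s_{(\omega,x)}$ restricted and normalized and because $\int_{\eta_\omega(x)} g\,\dx\lambda^s_{(\omega,x)} = 1$ by the very definition of $\lambda^*$, to exactly $g = \dx\mu^{\eta_\omega}_x/\dx\lambda^s_{(\omega,x)}$.
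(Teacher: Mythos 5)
Your approach is the standard one and, as far as one can tell, the same as the proof the paper delegates to Liu--Qian (\citep[Proposition IV.2.2]{Liu95}): disintegrate $\nu^\N\times\mu$ and $\lambda^*$ along $\eta$, observe that both $\mu^{\eta_\omega}_x$ and $g\cdot\lambda^s_{(\omega,x)}|_{\eta_\omega(x)}$ provide a disintegration of $\nu^\N\times\mu$, and invoke essential uniqueness of canonical systems of conditional measures on a Lebesgue space. Two points in the write-up deserve tightening. First, the assertion ``$\int_{\eta_\omega(x)} g\,\dx\lambda^s_{(\omega,x)} = 1$ by the very definition of $\lambda^*$'' is not literally immediate: what the definition gives is, after the monotone-class step you sketch, the identity
\begin{align*}
\int f \,\dx(\nu^\N\times\mu) = \int f(\omega,x)\left(\int_{\eta_\omega(x)} g(\omega,y)\,\dx\lambda^s_{(\omega,x)}(y)\right)\dx(\nu^\N\times\mu)(\omega,x)
\end{align*}
for every nonnegative $\eta$-measurable $f$, and it is this family of equalities (not a single instance) that forces $\int_{\eta_\omega(x)} g\,\dx\lambda^s_{(\omega,x)} = 1$ for $\nu^\N\times\mu$-a.e.\ $(\omega,x)$. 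Second, your final appeal to Proposition \ref{prop:AbsCondMeasures} with $\nu$ replaced by $\lambda^*$ is technically off: that proposition is stated for a Lebesgue \emph{probability} space, while $\lambda^*$ is only $\sigma$-finite, so the conditional measures $\nu_{\alpha(x)}$ it refers to are not directly available here. Fortunately this is harmless, because the uniqueness argument in your penultimate paragraph (testing against a countable generating algebra of $\eta$-respecting sets and concluding that the two measures on each atom coincide) already yields the result without invoking Proposition \ref{prop:AbsCondMeasures} at all; I would drop that last invocation or replace it by a direct citation of essential uniqueness of disintegrations.
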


\begin{proof}
This is \citep[Proposition III.2.2]{Liu95}.
\end{proof}

\section{Proof of Theorem \ref{thm:Pesin}} \label{sec:proofPesin}

In this section we will state the proof of Pesin's formula for random dynamical systems on $\R^{d}$ which have an invariant probability measure and satisfies the assumptions from Section \ref{sec:rds}.

\subsection{Estimation of the Entropy from Below} \label{sec:EstimateFromBelow}

First we will state the proof of the estimation of the entropy from below, i.e. the following the result, which is basically taken from \citep[Section IV.3]{Liu95} and bases on the partition constructed in the previous section.

\begin{theorem}
Let $\mathcal{X}(\R^d, \nu, \mu)$ be a random dynamical system that satisfies Assumptions \ref{ass1} - \ref{ass2b}. If the invariant measure $\mu$ is absolutely continuous with respect to Lebesgue measure on $\R^d$ then we have
\begin{align*}
h_\mu(\mathcal{X}(\R^d, \nu,\mu)) \geq \int \sum_i \lambda^{(i)}(x)^+ m_i(x) \dx \mu.
\end{align*}
\end{theorem}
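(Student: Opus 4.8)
The plan is to follow the classical Ledrappier–Young / Liu–Qian strategy, now available to us because all the pieces have been set up for the non-compact case in the earlier sections. The central object is the measurable partition $\eta$ constructed in Proposition \ref{prop:mainprop}: it is subordinate to the $W^s$-manifolds, it satisfies $F^{-1}\eta \le \eta$ and $\{\omega\}\times\R^d\le\eta$, and — crucially, since $\mu\ll\lambda$ — the conditional measures $\mu_x^{\eta_\omega}$ are absolutely continuous with respect to the induced Lebesgue measure $\lambda^s_{(\omega,x)}$ on the stable manifold. The first step is to pass to the two-sided system: by Theorems \ref{thm:EqualityOfEntropies1} and \ref{thm:EqualityOfEntropies2} it suffices to bound $h^{\sigma^+}_{\mu^*}(G)$ (equivalently $h^\sigma_{\mu^*}(G)$) from below, and one shows that for the lift $\eta$ pulled back to $\Omega^\Z\times\R^d$ one has
\begin{align*}
h^{\sigma_0}_{\nu^\N\times\mu}(F) \;\geq\; h^{\sigma_0}_{\nu^\N\times\mu}(F,\eta) \;=\; H_{\nu^\N\times\mu}\!\left(\eta \,\big|\, F^{-1}\eta\right),
\end{align*}
the last equality coming from Definition \ref{def:Entropy2} together with $F^{-1}\eta\le\eta$ (so that $\bigvee_{k=1}^\infty F^{-k}\eta\vee\zeta_0 = F^{-1}\eta$ up to the randomness factor, which is already contained in $\eta$).

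The second step is to compute the conditional entropy $H_{\nu^\N\times\mu}(\eta\,|\,F^{-1}\eta)$ and identify it with an integral of a Jacobian. Using \rref{eq:alternativeCondEntropy} and Proposition \ref{prop:PropoertieOfg}, which gives $g = \dx\mu_x^{\eta_\omega}/\dx\lambda^s_{(\omega,x)}$ $\lambda^s$-a.e., one writes
\begin{align*}
H_{\nu^\N\times\mu}\!\left(\eta\,\big|\,F^{-1}\eta\right)
= -\int \log \frac{g(\omega,x)}{g(F(\omega,x))\cdot \big|\det\big(D_x f_0(\omega)|_{E^s}\big)\big|^{-1}\,(\text{correction})}\,\dx(\nu^\N\times\mu),
\end{align*}
more precisely one shows that the measure $\lambda^*$ built from $\lambda^s_{(\omega,x)}$ transforms under $F$ with Jacobian $|\det(D_xf_0(\omega)|_{T_xW^s})|$, so that the $F$-invariance of $\nu^\N\times\mu$ forces
\begin{align*}
H_{\nu^\N\times\mu}\!\left(\eta\,\big|\,F^{-1}\eta\right) = \int \log\big|\det\big(D_x f_0(\omega)\big|_{T_x W^s(\omega,x)}\big)\big|^{-1}\,\dx(\nu^\N\times\mu)(\omega,x).
\end{align*}
Here Assumption \ref{ass2b} (integrability of $\log|\det D_xf_0(\omega)|$) is exactly what guarantees this integral is well defined and the manipulation legitimate; this is the place the excerpt flags as needing that assumption.

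The third step is purely ergodic-theoretic: by the multiplicative ergodic theorem (Theorem \ref{thm:met}), applied to the cocycle restricted to the stable subspace $T_xW^s(\omega,x) = V^{(p)}_{(\omega,x)} = \bigcup_{\lambda^{(i)}(x)<0}V^{(i)}_{(\omega,x)}$, the Birkhoff average of $-\log|\det(D_xf_0(\omega)|_{T_xW^s})|$ equals $\sum_{\lambda^{(i)}(x)<0}(-\lambda^{(i)}(x))m_i(x) = \sum_i \lambda^{(i)}(x)^- m_i(x)$, where $a^- = \max\{-a,0\}$. Then one invokes the general inequality relating $F$ and $F^{-1}$: since $h^{\sigma}_{\mu^*}(G) = h^{\sigma}_{\mu^*}(G^{-1})$ is symmetric and the sum of all Lyapunov exponents with multiplicity is $\int\sum_i\lambda^{(i)}(x)m_i(x)\,\dx\mu = \int\log|\det D_xf_0(\omega)|\,\dx(\nu^\N\times\mu)$, one converts the bound in terms of negative exponents (stable side) into the desired bound in terms of positive exponents. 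Concretely: applying the above to the inverse system gives $h_\mu(\mathcal{X}(\R^d,\nu,\mu))\geq \int\sum_i\lambda^{(i)}(x)^+m_i(x)\,\dx\mu$ directly, since the unstable manifolds of $F$ are the stable manifolds of $F^{-1}$.

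The main obstacle, and the reason all the earlier sections were needed, is making Step 2 rigorous in the non-compact setting: one must verify that $\lambda^*$ is genuinely $\sigma$-finite, that the change-of-variables formula for $\lambda^*$ under $F$ holds with the stated Jacobian (this uses the absolute continuity of conditional measures, Theorem \ref{thm:AbsContCondMeasures}, hence ultimately the absolute continuity theorem \ref{thm:ACT}), and that no mass escapes to infinity — all the integrability Assumptions \ref{ass1}--\ref{ass2b} are deployed precisely to control the unbounded geometry. The routine parts (the entropy identities, the Birkhoff averaging, the passage to the inverse) are exactly as in \citep[Chapter IV]{Liu95}; only the measure-transformation and integrability bookkeeping genuinely differs, and that is where the argument should be written out carefully.
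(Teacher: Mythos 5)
Your overall architecture (use the subordinate partition $\eta$ from Proposition~\ref{prop:mainprop}, express the conditional entropy via Radon--Nikodym densities, identify the integral via the multiplicative ergodic theorem, compare with $h_\mu$ through Theorems~\ref{thm:EqualityOfEntropies1}--\ref{thm:EqualityOfEntropies2}) matches the paper's, but two steps in the middle do not survive scrutiny and lead you to a quantity the paper never needs.

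First, your Step 2 asserts the \emph{equality}
\begin{align*}
H_{\nu^\N\times\mu}\!\left(\eta\,\big|\,F^{-1}\eta\vee\sigma_0\right)
= \int \log\abs{\determinante\big(D_x f_0(\omega)\big|_{T_x W^s(\omega,x)}\big)}^{-1}\,\dx(\nu^\N\times\mu),
\end{align*}
claiming that $F$-invariance ``forces'' it. That is not what happens. When one writes the conditional measure $\mu_x^{(f^n_\omega)^{-1}\eta_{\tau^n\omega}}(\eta_\omega(x))$ through the chain rule (Proposition~\ref{prop:AbsCondMeasures}), three factors appear, not one: the coboundary $X_n=\frac{\phi(y)}{\phi(f^n_\omega y)}\frac{g(F^n(\omega,y))}{g(\omega,y)}$, the ratio of Jacobians $Y_n=\frac{\abs{\determinante(D_yf^n_\omega|_{E_0})}}{\abs{\determinante(D_yf^n_\omega)}}$, and a normalising integral $Z_n$. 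The coboundary integrates to zero; the $Z_n$ term is controlled only one-sidedly (via Jensen/invariance), giving an inequality of the right sense, not an identity. Asserting the clean equality is asserting the Ledrappier--Young formula along the stable foliation, which is at the strength of Pesin's formula itself and requires a much more delicate argument than the lower bound alone. The paper does not prove it and does not need it.

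Second, even granting your equality, the quantity you produce is wrong in sign. The Birkhoff average of $-\log\abs{\determinante(D_x f_0(\omega)|_{T_xW^s})}$ is $\int\sum_i \lambda^{(i)}(x)^- m_i(x)\,\dx\mu$, the \emph{negative} exponents. You notice this and propose to repair it by time reversal. The paper does not time-reverse; it gets the positive exponents directly because the change of variables involves the \emph{full} Jacobian $\abs{\determinante D_xf^n_\omega}$ (hiding in $\Phi_n=\dx\mu/\dx(\mu\circ f^n_\omega)$) in addition to the restricted Jacobian from the leaf-wise integration on $W^s$; the ratio $Y_n$ of these two is what averages to $\int\sum_i\lambda^{(i)}(x)^+m_i(x)\,\dx\mu$. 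This is where Assumption~\ref{ass2b} enters: it makes $\log\abs{\determinante D_xf^n_\omega}$ (and hence $\log Y_n$) integrable, via \eqref{eq:detequation1}--\eqref{eq:detequation2}. Your time-reversal detour, besides being unnecessary, would also require you to verify that the reversed cocycle (i.i.d.\ maps with law that of $f_0^{-1}$) again satisfies Assumptions~\ref{ass1}--\ref{ass2b}, that its invariant measure is still $\mu$ (true via $\mu^*$ and Theorem~\ref{thm:EqualityOfEntropies2}, but must be said), and that the whole stable-manifold/partition machinery of Sections~\ref{sec:StableManifold}--\ref{sec:ConstructionPartition} can be re-run for it -- none of which you address. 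Finally, the paper works with $\tfrac1n H_{\nu^\N\times\mu}(\eta\,|\,F^{-n}\eta\vee\sigma_0)$ for general $n$ and also separates off the set $I_0$ where all exponents are nonnegative (on which the left side is $0$ and the right side vanishes by \citep[Proposition~I.3.3]{Liu95}); your proposal omits that reduction, which is needed to make the application of Proposition~\ref{prop:AbsCondMeasures} legitimate.
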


\begin{proof}
Assuming for the moment that
\begin{align} \label{eq:assume1}
H_{\nu^\N \times \mu} (\eta|F^{-n}\eta \vee \sigma_0) < +\infty
\end{align}
then one can show (see \citep[Proof of Theorem IV.1.1]{Liu95}) that by Theorems \ref{thm:EqualityOfEntropies1} and \ref{thm:EqualityOfEntropies2}
\begin{align*}
\lim_{n \to \infty} \frac{1}{n} H_{\nu^\N \times \mu} (\eta|F^{-n}\eta \vee \sigma_0) &\leq H_{\mu^*}(\eta^+ | G^{-1} \eta^+ \vee \sigma) = h_{\mu^*}^\sigma (G, \eta^+) \\
&\leq \sup_{\xi} h_{\mu^*}^\sigma (G, \xi) = h_{\mu^*}^\sigma (G) = h_\mu(\mathcal{X}(\R^d, \nu,\mu)),
\end{align*}
where $G$ was defined in Section \ref{sec:rds}, $\sigma_{0}$ and $\sigma$ were defined in Section \ref{subsec:EntropyOfRandDiffeos}, $\mu^{*}$ is the measure defined by Proposition \ref{prop:ExistenceOfMuStar} and $\eta^{+} := P^{-1}\eta$ with the projection $P$ as defined in Section \ref{subsec:EntropyOfRandDiffeos}. Thus it suffices to show that \rref{eq:assume1} is true and that for all $n \geq 1$
\begin{align} \label{eq:tobeshown1}
\frac{1}{n} H_{\nu^\N \times \mu} (\eta|F^{-n}\eta \vee \sigma_0) \geq \int \sum_i \lambda^{(i)}(x)^+ m_i(x) \dx \mu.
\end{align}

By \rref{eq:alternativeCondEntropy} and the properties of the partition $\eta$ we get
\begin{align} \label{eq:ReoresentationOfEntropy}
H_{\nu^\N \times \mu} (\eta|F^{-n}\eta \vee \sigma_0) &= - \int_{\Omega^\N \times \R^d} \log\left(\nu^\N \times \mu_{(\omega,x)}^{F^{-n}\eta \vee \sigma_0}(\eta(\omega,x))\right) \dx \nu^\N \times \mu(\omega,x) \notag \\
&= - \int_{\Omega^\N} \int_{\R^d} \log\left(\mu_{x}^{(f^n_\omega)^{-1}\eta_{\tau^n\omega}}(\eta_\omega(x))\right) \dx \mu(x) \dx\nu(\omega).
\end{align}
Let $\{I_j\}_{j \in \N}$ be the sets from the proof of Proposition \ref{prop:mainprop} of the construction of the partition $\eta$ and define $I := \bigcup_{j \in \N} I_j$ and $I_0 := \Omega^\N \times\R^d \!\setminus\! I$. Since each $I_j$ is $F$-invariant we have $F^{-1}I = I$ and $F^{-1}I_0 = I_0$. Thus $\eta$ and $F^{-n}\eta \vee \sigma_0$ are refinements of the partition $\{I,I_0\}$ and their restriction to $I_0$ is the partition into single points which implies for each $(\omega,x) \in I_{0}$
\begin{align*}
\log\left(\mu_{x}^{(f^n_\omega)^{-1}\eta_{\tau^n\omega}}(\eta_\omega(x))\right) = 0.
\end{align*}
By definition of $\hat\Lambda_1$ the Lyapunov exponents are all non-negative on $(\Omega^\N\times\R^d) \!\setminus\!\hat\Lambda_1$, i.e. $\lambda^{(1)}(x) \geq 0$ on $(\Omega^\N\times\R^d) \!\setminus\!\hat\Lambda_1$. Thus we get from \citep[Proposition I.3.3]{Liu95} 
\begin{align*}
0 \leq \int_{I_0} \sum_i \lambda^{(i)}(x)^+ m_i(x) \dx \nu^\N\times\mu = \int_{I_0} \sum_i \lambda^{(i)}(x) m_i(x) \dx \nu^\N\times\mu \leq 0,
\end{align*}
which implies
\begin{align*}
\int_{I_0} \sum_i \lambda^{(i)}(x)^+ m_i(x) \dx \nu^\N\times\mu = 0.
\end{align*}
So in the following let us assume without  loss of generality that $\nu^\N\times\mu(I) = 1$.

Because of $\mu \ll \lambda$ and the invariance of $\mu$ we get that there exists a Borel subset $\Gamma' \subset \Omega^\N$ with $\nu^\N(\Gamma') = 1$ such that for any $\omega \in \Gamma'$
\begin{align*}
\mu\ll \mu \circ f^n_\omega,
\end{align*}
where $\mu \circ f^n_\omega(E) := \mu(f^n_\omega(E))$ for any Borel set $E \subset \R^d$. Denoting by $\phi := \dx \mu / \dx \lambda$ the Radon-Nikodym derivative it is easy to check that for any $\omega \in \Gamma'$
\begin{align*}
\frac{\dx\mu}{\dx (\mu \circ f^n_\omega)}(z) = \frac{\phi(z)}{\phi(f^n_\omega z)} \abs{\determinante D_z f^n_\omega}^{-1} =: \Phi_n(\omega,z).
\end{align*}
Then Proposition \ref{prop:AbsCondMeasures} implies that
\begin{align*}
\frac{\dx \mu^{(f^n_\omega)^{-1}\eta_{\tau^n \omega}}_x}{\dx (\mu \circ f^n_\omega)^{(f^n_\omega)^{-1}\eta_{\tau^n \omega}}_x} = \frac{\Phi_n(\omega, \cdot)|_{(f^n_\omega)^{-1}\eta_{\tau^n \omega}(x)}}{\int_{(f^n_\omega)^{-1}\eta_{\tau^n \omega}(x)} \Phi_n(\omega,z) \dx (\mu \circ f^n_\omega)^{(f^n_\omega)^{-1}\eta_{\tau^n \omega}}_x}
\end{align*}
for $\mu$-a.e. $x \in \R^d$. For $\nu^\N \times \mu$-a.e. $(\omega,y) \in \Omega^\N \times \R^d$ let us define
\begin{align*}
W_n(\omega,x) &:= \mu_{y}^{(f^n_\omega)^{-1}\eta_{\tau^n\omega}}(\eta_\omega(y))\\
X_n(\omega,x) &:= \frac{\phi(y)}{\phi(f^n_\omega y)} \frac{g(F^n(\omega,y)}{g(\omega,y)} \\
Y_n(\omega,x) &:= \frac{\abs{\determinante(D_yf^n_\omega|_{E_0(\omega,z)})}}{\abs{\determinante(D_yf^n_\omega)}}\\
Z_n(\omega,x) &:= \int_{(f^n_\omega)^{-1}\eta_{\tau^n \omega}(y)} \Phi_n(\omega,z) \dx (\mu \circ f^n_\omega)^{(f^n_\omega)^{-1}\eta_{\tau^n \omega}}_y,
\end{align*}
where $g$ is the function defined before Proposition \ref{prop:PropoertieOfg}. Then one can show (see \citep[Claim IV.3.1]{Liu95}) using change of variables formula twice and the absolute continuity of $\mu \ll \lambda$ and $\mu^{\eta_\omega}_x \ll \lambda^s_{(\omega,x)}$ for $\nu^\N\times\mu$-a.e. $(\omega,x)$ that almost everywhere on $\Omega^\N\times\R^d$ we have
\begin{align} \label{eq:RepresentationOfW}
W_n(\omega,x) = \frac{X_n(\omega,x) Y_n(\omega,x)}{Z_n(\omega,x)}.
\end{align}

Because of
\begin{align*}
\abs{\determinante(D_xf^n_\omega))} \leq \abs{D_xf^n_\omega}^{d}
\end{align*}
Assumption \ref{ass1} implies for each $n \geq 1$ that $\log^{+} \abs{\determinante(D_xf^n_\omega)} \in \mathcal{L}^1(\nu^\N\times\mu)$ and analogously that $\log^{+} \abs{\determinante(D_xf^n_\omega|_{E_0(\omega,x)})} \in \mathcal{L}^1(\nu^\N\times\mu)$. Thus by the multiplicative ergodic theorem we have for $n\geq 1$
\begin{align} \label{eq:detequation1}
\frac{1}{n} \int \log \abs{\determinante(D_xf^n_\omega)} \dx\nu^\N\times\mu = \int \sum_i \lambda^{(i)}(x) m_i(x) \dx \mu(x)
\end{align}
and
\begin{align} \label{eq:detequation2}
\frac{1}{n} \int \log \abs{\determinante(D_xf^n_\omega|_{E_0(\omega,x)})} \dx\nu^\N\times\mu = \int \sum_i \lambda^{(i)}(x)^- m_i(x) \dx \mu(x),
\end{align}
where both sides of the two equations might be $-\infty$. By the multiplicity of the determinante  Assumption \ref{ass2b} implies that $\log \abs{\determinante(D_xf^n_\omega)} \in \mathcal{L}^1(\nu^\N\times\mu)$ for $n\geq 1$ and thus by \rref{eq:detequation1} that
\begin{align*}
\sum_i \lambda^{(i)}(x) m_i(x) \in \mathcal{L}^{1}(\R^{d},\mu).
\end{align*}
This yields by \rref{eq:detequation2} that $\log \abs{\determinante(D_xf^n_\omega|_{E_0(\omega,x)})} \in \mathcal{L}^1(\nu^\N\times\mu)$, which finally implies the integrability of $\log Y_{n}$, i.e. $\log Y_n \in \mathcal{L}^1(\nu^\N \times \mu)$ and
\begin{align} \label{eq:EstimateOnY}
- \frac{1}{n} \int \log Y_n \dx \nu^\N \times \mu = \int \sum_i \lambda^{(i)}(x)^+ m_i(x) \dx \mu.
\end{align}

Further from \citep[Claim IV.3.3 and IV.3.4]{Liu95} we get that $\log X_n \in \mathcal{L}^1(\nu^\N \times \mu)$ and $\log Z_n \in \mathcal{L}^1(\nu^\N \times \mu)$ with
\begin{align} \label{eq:EstimateOnX}
- \frac{1}{n} \int \log X_n \dx \nu^\N \times \mu = 0
\end{align}
and
\begin{align} \label{eq:EstimateOnZ}
- \frac{1}{n} \int \log Z_n \dx \nu^\N \times \mu \geq 0.
\end{align}
Combining now \rref{eq:EstimateOnY}, \rref{eq:EstimateOnX} and \rref{eq:EstimateOnZ} via \rref{eq:RepresentationOfW} and \rref{eq:ReoresentationOfEntropy} finishes the proof.
\end{proof}

\subsection{Estimate of the Entropy from Above} \label{sec:EstimateFromAbove}

A nice an short proof  of the reverse inequality was given in \citep{bahnmueller95} for random dynamical systems on a compact Riemannian manifold. This proof was extended in \citep{bargen10} to isotropic Ornstein-Uhlenbeck flows, which can be seen as some special random dynamical system on $\R^d$. This proof can be extended to our more general situation assuming Assumption \ref{ass3}. Precisely we have the following theorem.

\begin{theorem} \label{thm:EstimateFromAbove}
Let $\mathcal{X}(\R^d, \nu, \mu)$ be a random dynamical system that satisfies Assumption \ref{ass1} and \ref{ass3}, then we have
\begin{align*}
h_\mu(\mathcal{X}(\R^d, \nu)) \leq \int \sum_i \lambda^{(i)}(x)^+ m_i(x) \dx \mu.
\end{align*}
\end{theorem}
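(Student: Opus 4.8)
The plan is to follow the Margulis--Ruelle argument for random systems from \citep{bahnmueller95}, in the form already carried over to $\R^d$ in \citep{bargen10}, replacing the two places where compactness (or the special structure of the isotropic Ornstein--Uhlenbeck flow) entered by estimates coming from Assumptions \ref{ass1} and \ref{ass3}. By Theorem \ref{thm:EqualityOfEntropies1} and the definition of the entropy it suffices to prove, for every finite partition $\xi$ of $\R^d$,
\begin{align*}
h_\mu(\mathcal{X}(\R^d,\nu),\xi) = \lim_{n\to\infty}\frac{1}{n}\int_{\Omega^\N} H_\mu\Big(\bigvee_{k=0}^{n-1}(f^k_\omega)^{-1}\xi\Big)\dx\nu^\N(\omega) \leq \int \sum_i \lambda^{(i)}(x)^+ m_i(x)\dx\mu(x),
\end{align*}
and then to take the supremum over $\xi$. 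Since $\mu$ is a Borel probability measure on $\R^d$ it is tight, so given $\eps>0$ one may work with a finite partition $\xi=\{C_1,\dots,C_m,C_\infty\}$ whose cells $C_1,\dots,C_m$ lie in a fixed ball $B(0,\rho)$ and have diameter $<\delta\le 1$ while $\mu(C_\infty)<\eps$; being finite, $\xi$ satisfies $H_\mu(\xi)<\infty$ automatically.

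First I would install Ruelle's combinatorial counting estimate. Using $H_\mu(\mathcal{P})\le H_\mu(\xi)+H_\mu(\mathcal{P}\mid\xi)$ with $\mathcal{P}=\bigvee_{k=0}^{n-1}(f^k_\omega)^{-1}\xi$, together with $H_\mu(\mathcal{P}\mid\xi)\le\int_{\R^d}\log\#\{P\in\mathcal{P}:P\cap\xi(x)\ne\emptyset\}\dx\mu(x)$, the task reduces to bounding, for $\mu$-a.e.\ $x$ and $\nu^\N$-a.e.\ $\omega$, the number $N_n(\omega,x)$ of atoms of $\bigvee_{k=0}^{n-1}(f^k_\omega)^{-1}\xi$ meeting the cell through $x$. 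Such an atom is determined by the itinerary of $\xi(x)$ through the cells of $\xi$ under $\id,f^1_\omega,\dots,f^{n-1}_\omega$, so $N_n(\omega,x)$ is controlled multiplicatively by the local expansion of $f^n_\omega$ on $\xi(x)$: by the mean value inequality $f^n_\omega(\xi(x))$ lies in an ellipsoidal neighbourhood whose semi-axes are comparable to $\delta$ times the singular values $s_1^{(n)}\ge\dots\ge s_d^{(n)}$ of $D_\eta f^n_\omega$ for $\eta$ in the $\delta$-ball about the centre of $\xi(x)$, hence meets at most $C_d\prod_i\max\big(1,\delta^{-1}s_i^{(n)}\big)$ cells of $\xi$, plus an $O(1)$ contribution for the atom $C_\infty$. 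Taking logarithms, for $\delta\le1$,
\begin{align*}
\log N_n(\omega,x) \le \log C_d + d\abs{\log\delta} + \sum_{j=1}^{d}\log^+\Big(\sup_{\eta\in B(x,1)}\abs{\Lambda^j D_\eta f^n_\omega}\Big).
\end{align*}
This is the \emph{first} of the two estimates to be changed: in \citep{bahnmueller95} the terms $\log^+\abs{\Lambda^j Df^n}$ are bounded uniformly, whereas here one only knows they belong to $\mathcal{L}^1(\nu^\N\times\mu)$ --- but that is precisely Assumption \ref{ass3} (since $\log^+\abs{\Lambda^j D_\eta f^n_\omega}\le d\log^+\abs{D_\eta f^n_\omega}$), and it is all that is needed.

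The \emph{second} change is the passage to the limit. Dividing by $n$, integrating in $\omega$ and $x$, and discarding the constants $H_\mu(\xi)$, $\log C_d$ and $d\abs{\log\delta}$ (each $O(1/n)$), one is left to estimate
\begin{align*}
\limsup_{n\to\infty}\frac{1}{n}\int\!\!\int_{\R^d} \sum_{j=1}^{d}\log^+\Big(\sup_{\eta\in B(x,1)}\abs{\Lambda^j D_\eta f^n_\omega}\Big)\dx\mu(x)\dx\nu^\N(\omega).
\end{align*}
By Assumption \ref{ass1} the multiplicative ergodic theorem (Theorem \ref{thm:met}) and a Kingman subadditivity argument give the $\nu^\N\times\mu$-a.e.\ and $\mathcal{L}^1$ convergence of $\tfrac1n\log\abs{\Lambda^j D_x f^n_\omega}$ to the sum of the $j$ largest Lyapunov exponents (counted with multiplicity); here one uses Assumption \ref{ass3}, in the guise of the corresponding estimate of \citep{bargen10}, to see that replacing $D_x f^n_\omega$ by $\sup_{\eta\in B(x,1)}\abs{\Lambda^j D_\eta f^n_\omega}$ does not alter the exponential rate and that the sequences are uniformly integrable, so that $\limsup$ and integral may be exchanged. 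Summing over $j$ then telescopes the right-hand side to $\int\sum_i\lambda^{(i)}(x)^+m_i(x)\dx\mu(x)$ up to an error that tends to $0$ as $\delta\to0$ and $\rho\to\infty$, i.e.\ as $\eps\to0$. Letting $\eps\to0$ and taking the supremum over finite partitions $\xi$ yields the claim.

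I expect the genuine obstacle to be exactly the loss of the uniform bound on $\abs{Df^n_\omega}$: one must run Ruelle's cell count and the subsequent limit with only the $\mathcal{L}^1$-integrability of Assumption \ref{ass3}, in particular justifying that $\sup_{\eta\in B(f^k_\omega x,1)}\abs{\Lambda^j D_\eta f^n_\omega}$ grows at the same exponential rate as $\abs{\Lambda^j D_x f^n_\omega}$ and that the relevant averaged quantities are uniformly integrable in $n$. Once these two estimates are in place, the remainder of the proof is the verbatim argument of \citep{bahnmueller95,bargen10}.
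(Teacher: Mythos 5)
Your plan is the right one at the top level: follow \citep{bargen10} (and, through it, \citep{bahnmueller95}) and repair the two places where compactness, respectively translation invariance of the derivative of the isotropic Ornstein--Uhlenbeck flow, was used. The paper does exactly this. However, the two ``changes'' you identify do not match the two the paper actually makes, and the mechanism you invoke in the second change is not what carries the argument.

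The paper does not re-derive the Ruelle--Margulis cell count, nor does it pass through exterior powers $\Lambda^j$ and a telescoping over $j$; it simply takes the bound $H_\mu(\cdot)\le\text{terms }I+II+III$ already established in \citep{bargen10}, in which the sample space is tiled by cells $\xi_{x_i}\subset B(x_i,1/k)$ (finitely many inside $B(0,l)$, the rest outside) and good events $\Omega_{k,l}$ are fixed with $\Omega_{k,l}\nearrow\Omega$, and observes that the only two ingredients that used translation invariance are the tail sum
$\sum_{i\ge m+1}\mu(\xi_{x_i})\int\log^+L_k(n,\omega,x_i)\,\dx\nu^\N$ (term $II$) and the bad-event sum
$\sum_{i\le m}\mu(\xi_{x_i})\int_{\Omega^\N\setminus\Omega_{k,l}}\log^+L_k(n,\omega,x_i)\,\dx\nu^\N$ (term $III$), where $L_k(n,\omega,y)=\sup_{z\in B(y,1/k)}\abs{D_z f^n_\omega}$. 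The replacement is a one-line inclusion: for $x\in\xi_{x_i}$ one has $B(x_i,1/k)\subset B(x,2/k)\subset B(x,1)$, so $L_k(n,\omega,x_i)\le L_1(n,\omega,x)$, and each Riemann-type sum is dominated by $\int\int\sup_{z\in B(x,1)}\log^+\abs{D_zf^n_\omega}\,\dx\nu^\N\dx\mu$ restricted to the appropriate region of $\R^d\times\Omega^\N$. Assumption~\ref{ass3} makes this finite (for term $II$) and, via dominated convergence as $\Omega_{k,l}\nearrow\Omega$, sends term $III$ to zero. That inclusion argument --- converting a discrete sum weighted by $\mu(\xi_{x_i})$ into a genuine integral over $x$ to which Assumption~\ref{ass3} applies directly --- is the paper's actual contribution here, and it is absent from your sketch.

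Your ``second change'' --- showing that $\sup_{\eta\in B(\cdot,1)}\abs{\Lambda^jD_\eta f^n_\omega}$ grows at the same exponential rate as $\abs{\Lambda^jD_xf^n_\omega}$ and that the sequences $\tfrac1n\log^+(\cdot)$ are uniformly integrable so that $\limsup$ and integral commute --- is a stronger and less tractable claim than what is needed, and you do not indicate how to prove it from Assumption~\ref{ass3} alone. The route via terms $I$--$III$ sidesteps this entirely: one never needs to control the exponential rate of the enlarged supremum, only the (fixed-$n$) integrals above. Two further small inaccuracies in your counting step: $\sum_j\log^+\abs{\Lambda^jD}$ overcounts (one wants $\sum_j\log^+s_j^{(n)}=\max_j\log^+\abs{\Lambda^jD}$), and the relevant radius in the supremum should be the partition mesh $\delta$, not $1$, although Assumption~\ref{ass3} does cover the larger ball. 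None of this invalidates the plan, but it means your proposal, as written, leaves the genuinely new step of the paper --- the $L_k\le L_1$ inclusion feeding into Assumption~\ref{ass3} --- unidentified.
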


\begin{proof}
Let us remark that for isotropic Ornstein-Uhlenbeck flows the distribution of the derivative is translation invariant. Thus for $k \in \N$, $\omega \in \Omega^\N$ and $y \in \R^d$ the random variable
\begin{align*}
L_k(n,\omega,y) := \sup_{z \in B(y,\frac{1}{k})} \abs{D_z f^n_\omega},
\end{align*}
is independent of $y$ and hence
\begin{align} \label{eq:holger1}
\int_{\Omega^\N} \log^+(L_1(n,\omega,y)) \dx \nu^\N(\omega)
\end{align}
is uniformly bounded in $y \in \R^d$. Since we clearly do not have the translation invariance for any random dynamical system we need to have a closer look at the two estimates in \citep{bargen10} where \rref{eq:holger1} is used. In particular we need to bound
\begin{align*}
\lim_{k \to \infty} \sum^{+\infty}_{i=m+1} \mu(\xi_{x_{i}}) \int_{\Omega^{\N}} \log^{+}(L_{k}(n,\omega,x_{i}))\,\dx\nu^{\N}(\omega)
\end{align*}
for the estimate on term $II$ and show that
\begin{align} \label{eq:holger2}
\lim_{k \to \infty} \sum^{m}_{i=1} \mu(\xi_{x_{i}}) \int_{\Omega^{\N}\setminus \Omega_{k,l}} \log^{+}(L_{k}(n,\omega,x_{i}))\,\dx\nu^{\N}(\omega) = 0
\end{align}
for the estimate on term $III$, where for each $k,l \in \N$ the family of sets $\{\xi_{x_{i}}\}_{i=1,\dots,m}$ is a partition of $B(0,l)$ and $\{\xi_{x_{i}}\}_{i\geq m+1}$ a partition of $\R^{d}\!\setminus\!B(0,l)$ with $\xi_{x_{i}} \subset B(x_{i},1/k)$ for every $i \in \N$. The sets $\Omega_{k,l}$ are certain subsets of $\Omega^{\N}$ such that for each fixed $l\in \N$ we have $\Omega_{k,l} \nearrow \Omega$ for $k \to \infty$. For details concerning the definition of $\{\xi_{x_{i}}\}_{i\in \N}$ and $\Omega_{k,l}$ see \citep{bargen10}. Then for any $i \in \N$ and $x \in \xi_{x_{i}}$ we have
\begin{align*}
B\left(x_{i},\frac{1}{k}\right) \subset B\left(x,\frac{2}{k}\right).
\end{align*}
Thus we get by monotonicity of $\log^{+}$
\begin{align*}
\lim_{k \to \infty} \sum^{+\infty}_{i=m+1} \mu(\xi_{x_{i}}) &\int_{\Omega^{\N}} \log^{+}(L_{k}(n,\omega,x_{i}))\,\dx\nu^{\N}(\omega)\\
&\leq \lim_{k \to \infty} \sum^{+\infty}_{i=m+1} \int_{\xi_{x_{i}}}\int_{\Omega^{\N}} \log^{+}(L_{k/2}(n,\omega,x))\,\dx\nu^{\N}(\omega)\dx\mu(x)\\
&\leq\int_{\R^{d}\setminus B(0,l)} \int_{\Omega^{\N}} \log^{+}(L_{1}(n,\omega,x))\,\dx\nu^{\N}(\omega)\dx\mu(x)\\
&=\int_{\R^{d}\setminus B(0,l)} \int_{\Omega^{\N}} \sup_{z\in B(x,1)}\log^{+}(\abs{D_{z}f^{n}_{\omega}})\,\dx\nu^{\N}(\omega)\dx\mu(x),
\end{align*}
which is finite because of Assumption \ref{ass3}. On the other hand we have analogously
\begin{align*}
\sum^{m}_{i=1} \mu(\xi_{x_{i}}) &\int_{\Omega^{\N}\setminus \Omega_{k,l}} \log^{+}(L_{k}(n,\omega,x_{i}))\,\dx\nu^{\N}(\omega)\\
&\leq \int_{B(0,l)} \int_{\Omega^{\N}\setminus\Omega_{k,l}} \sup_{z\in B(x,1)}\log^{+}(\abs{D_{z}f^{n}_{\omega}})\,\dx\nu^{\N}(\omega)\dx\mu(x).
\end{align*}
Because of Assumption \ref{ass3} and $\Omega_{k,l} \nearrow \Omega$ this last expression converges to $0$ for $k \to \infty$ by dominated convergence. By this the proof of Theorem \ref{thm:EstimateFromAbove} follows strictly along the proof in \citep{bargen10}.
\end{proof}

\section{Application to Stochastic Flows} \label{sec:ApplicationToFlows}

In this section we will show that a broad class of stochastic flows which are generated by stochastic differential equations driven by continuous semimartingale noise can be seen as a random dynamical system in the sense of Section \ref{sec:rds}. Assuming that the generated dynamical system has an invariant probability measure that satisfies a mild integrability assumptions, we will show that the assumptions of Section \ref{sec:rds} are satisfied and hence Pesin's formula holds.

\subsection{Definition of Stochastic Flows}
For a short introduction to stochastic flows we will follow \citep[Section 2]{Imkeller99}. Let $\{F(x,t)\}_{t \geq 0}$ be a family of $\R^{d}$-valued continuous semimartingales indexed by $x \in \R^{d}$ on a filtered probability space $(\bar \Omega, \bar{\mathcal{F}}, (\bar{\mathcal{F}_{t}})_{t\geq 0}, \mathbf{P})$. Let $F(x,t) = M(x,t) + V(x,t)$ be the canonical decomposition of the semimartingale into a local martingale $M$ and a process $V$ of locally bounded variation. We will assume in the following that both $M$ and $V$ are jointly continuous in $(x,t)$ and furthermore that there exists $a: \R^{d} \times \R^{d} \times [0,+\infty) \times \bar \Omega \to \R^{d\times d}$ and $b: \R^{d} \times [0,+\infty) \times \bar \Omega \to \R^{d}$ such that
\begin{align*}
\langle M_{i}(x, \cdot), M_{j}(y, \cdot) \rangle_{t} = \int_{0}^{t} a_{i,j}(x,y,u) \dx u, \qquad V_{i}(x,t) = \int_{0}^{t} b_{i}(x,u) \dx u,
\end{align*}
where $\langle \cdot, \cdot\rangle_{t}$ denotes the quadratic variation process at time $t$. The functions $a$ and $b$ are called the local characteristics of $F$.

For a multi index $\alpha = (\alpha_{1}, \dots, \alpha_{d})$ with $\alpha_{i} \in \N_{0}, i = 1, \dots, d$ we write $\abs{\alpha} := \sum_{i=1}^{d} \abs{\alpha_{i}}$. We will say that $F$ has local characteristics of class $B^{m,\delta}_{ub}$ for $m \in \N_{0}$, $0 < \delta \leq 1$ (or just $F\in B^{m,\delta}_{ub}$) if $b \in C^{m}$ and all derivatives of $a$ up to order $m$ with respect to $x$ and $y$ (simultaneously) are continuous and for all $T > 0$
\begin{align*}
\esssup_{\bar\omega \in \bar \Omega} \sup_{0\leq t \leq T} \left(\norm{a(t)}^{\sim}_{m+\delta} + \norm{b(t)}_{m+\delta}\right) < +\infty,
\end{align*}
where
\begin{align*}
\norm{a(t)}^{\sim}_{m+\delta} &:= \sup_{x, y \in \R^{d}} \frac{\abs{a(x,y,t)}}{(1+\abs{x})(1 + \abs{y})} + \sum_{1 \leq \abs{\alpha} \leq m} \sup_{x, y \in \R^{d}} \abs{D^{\alpha}_{1}D^{\alpha}_{2}a(x,y,t)} \\
&\hspace{5ex}+ \sum_{\abs{\alpha} = m} \norm{D^{\alpha}_{1}D^{\alpha}_{2}a(\cdot,\cdot,t)}^{\sim}_{\delta},\\
\intertext{with}
\norm{f}^{\sim}_{\delta} &:= \sup_{x\neq x',y\neq y'}\frac{\abs{f(x,y) - f(x',y) - f(x,y') + f(x',y')}}{\abs{x-x'}^{\delta}\abs{y-y'}^{\delta}},\\
\intertext{and}
\norm{b(t)}_{m+\delta} &:= \sup_{x\in \R^{d}} \frac{\abs{b(x,t)}}{(1+\abs{x})} + \sum_{1 \leq \abs{\alpha} \leq m} \sup_{x\in \R^{d}} \abs{D^{\alpha}b(x,t)} \\
&\hspace{5ex}+ \sum_{\abs{\alpha} = m} \sup_{x\neq y}\frac{\abs{D^{\alpha}b(x,t) - D^{\alpha}b(y,t)}}{\abs{x-y}^{\delta}},
\end{align*}
where $D_{j}$ denotes the derivative operator with respect to the $j\textsuperscript{th}$ spatial variable and $D := D_{1}$ if there is only one spatial variable. If we consider the stochastic differential equation
\begin{align} \label{eq:sde}
\dx X(t) = F(X(t),\dx t)
\end{align}
on $\R^{d}$ where $F \in B^{k,\delta}_{ub}$ with $k \geq 1$ and $0 <\delta\leq 1$ is a spatial semimartingale as above then by \citep[Theorem 4.6.5]{Kunita90} there exists a stochastic flow of diffeomorphisms associated with \rref{eq:sde}. That is a map $\flow: [0,+\infty) \times [0,+\infty) \times \R^{d} \times \bar\Omega \to \R^{d}$ such that
\begin{enumerate}
\item $\flow_{s,t}(x,\cdot)$, $t\geq s$ solves \rref{eq:sde} with initial condition $X(s) = x$ for $s \geq 0$, $x \in \R^{d}$;
\item $\flow_{s,t}(\cdot,\bar\omega)$ is a $C^k$-diffeomorphism for each $s,t \geq 0$, $\bar\omega\in\bar\Omega$;
\item $\flow_{s,t}(\cdot,\bar\omega) = \flow_{t,s}^{-1}(\cdot,\bar\omega)$ for each $s,t \geq 0$, $\bar\omega \in \bar \Omega$;
\item $\flow_{s,u}(\cdot,\bar\omega) = \flow_{t,u}(\cdot,\bar\omega)\circ\flow_{s,t}(\cdot,\bar\omega)$ for each $s,t,u \geq 0$, $\bar\omega \in \bar \Omega$;
\item $(s,t) \mapsto \flow_{s,t}(\cdot,\bar\omega)$ is continuous from $[0,+\infty)^{2}$ to the (group of) diffeomorphisms on $\R^{d}$.
\end{enumerate}

If $F \in B^{k,\delta}_{ub}$ for some $k \geq 1$ and $0 < \delta \leq 1$ and the correction term
\begin{align*}
c(x,t) := \sum_{j=1}^d \frac{\partial a^{\cdot j}}{\partial x_j}(x,y,t)\bigg|_{y=x}
\end{align*}
also belongs to $B^{k,\delta}_{ub}$ then the generating semimartingale field of the backward flow $\{\flow_{t,s} : 0\leq s \leq t < \infty\}$ is also an element of $B^{k,\delta}_{ub}$ (see \citep[Section 4.1]{Kunita90}).

\subsection{Stochastic Flows as Random Dynamical Systems} \label{sec:flowtoRDS}

Under quite general assumptions this was done in \citep{Arnold95} and for our purpose in \citep[page 31]{Dimitroff06}.

To construct a random dynamical system in the sense of Section \ref{sec:rds} we need to assume that the semimartingale $F$ has stationary and independent increments, i.e. for all $0 \leq s \leq t$ the $C(\R^d,\R^d)$-valued random variables $F(\cdot, t) - F(\cdot, s)$ and $F(\cdot, t-s)$ have the same distribution and for all $n \geq 0$ and $0 \leq t_1 < t_2 < \dots < t_n$ the random variables $F(\cdot, t_1), F(\cdot, t_2) - F(\cdot, t_1), \dots,F(\cdot, t_{n}) - F(\cdot, t_{n-1})$ are independent. Since the flow $\flow$ satisfies \rref{eq:sde} stationarity and independence is directly transferred to $\flow$. Then we can construct a random dynamical system from a stochastic flow as follows: As in the proof of \citep[Proposition 2.2.1]{Dimitroff06} we can construct the flow $\flow$ on its canonical pathspace $(\tilde \Omega, \tilde{\mathcal{F}}, \tilde{\mathbf{P}})$, where
\begin{align*}
\tilde \Omega &:= C_0\left(\R, C\left(\R^d,\R^d\right)\right) := \left\{f: \R \to C\left(\R^d,\R^d\right): f \text{ is continuous and } f(0) = 0\right\}
\end{align*}
equipped with the topology of uniform convergence on compacts and
\begin{align*}
 \tilde{\mathcal{F}} := \mathcal{B}\left(C_0\left(\R, C\left(\R^d,\R^d\right)\right)\right)
\end{align*}
the Borel $\sigma$-algebra on $\tilde \Omega$. The measure $\tilde{\mathbf{P}}$ on $(\tilde\Omega,\tilde{\mathcal{F}})$ is then defined by $\tilde{\mathbf{P}}(\tilde\omega(0) = \id_{\R^d}) = 1$ and its increments, i.e. for all $n \geq 0$, $0\leq t_1 < t_2 < \dots < t_n$ and all $B \in \mathcal{B}\left(C\left(\R^d,\R^d\right)\right)^{\otimes n}$ set
\begin{align*}
&\tilde{\mathbf{P}}\left(\left(\tilde \omega(t_1), \tilde \omega(t_2)\circ \tilde \omega(t_1)^{-1}, \dots, \tilde \omega(t_{n})\circ \tilde \omega(t_{n-1})^{-1}\right) \in B\right) \\
&\hspace{35ex}= \mathbf{P}\left(\left(\flow_{0,t_1},\flow_{t_1,t_2},\dots,\flow_{t_{n-1},t_n} \right)\in B \right).
\end{align*}
If we now discretize the flow uniformly with step size $1$ then we can define by the stationarity and independence of the flow the measure
\begin{align*}
\nu := \mathbf{P}\circ \flow^{-1}_{0,1}
\end{align*}
on $(\Omega,\mathcal{B}(\Omega))$ (in the sense of Section \ref{sec:rds}) and one can easily see that we are exactly in the situation of Section \ref{sec:rds} with $f_0(\omega) = \tilde\omega(1) = \flow_{0,1}(\bar\omega, \cdot)$.

We will call a probability measure $\mu$ on $\R^{d}$ an {\it invariant} measure of the stochastic flow $\flow$, if it is an invariant measure for the one-point motion of the flow in the sense of discrete (one-step) Markov chains, i.e. for any Borel set $A$ of $\R^{d}$
\begin{align*}
\int_{\bar\Omega} \mu\left(\flow^{-1}_{0,1}(A)\right) \dx \mathbf{P} = \mu(A).
\end{align*}
This definition coincides directly with the definition of invariant measures for random dynamical systems (see Definition \ref{def:invariantMeasure}) via the construction above.

Let us remark that we can use the one-step discretization without loss of generality for our purposes. If we denote $\nu_t := \mathbf{P}\circ \flow^{-1}_{0,t}$ then \citep[Corollary 3.3]{bargen10} implies that for every $t \geq 0$ the entropy satisfies
\begin{align*}\
h_\mu(\mathcal{X}^+(\R^d,\nu_t)) = t h_\mu(\mathcal{X}^+(\R^d,\nu)).
\end{align*}
Thus we will consider the random dynamical system constructed from the one-step discretization of the stochastic flow $\flow$.

\subsection{Pesin's Formula for Stochastic Flows}

Then we can state the main theorem, which says that under some mild regularity assumptions on the driving semimartingale field and a mild integrability assumption on the invariant probability measure the assumptions from Section \ref{sec:rds} are satisfied and hence Pesin's formula holds.

\begin{theorem}
Let $\flow$ be an stochastic flow with driving semimartingale field $F\in B^{k,1}_{ub}$ for some $k \geq 2$ which has stationary and independent increments and let the semimartingale field of the backward flow be also an element of $B^{k,1}_{ub}$. Assume further that $\flow$ has an invariant probability measure $\mu$ which satisfies
\begin{align} \label{eq:assumptionOnMu}
\int_{\R^{d}} \left(\log(\abs{x} + 1)\right)^{1/2} \dx \mu(x) < +\infty.
\end{align}
Then the discretized $\flow$ is a random dynamical system in the sense of Section \ref{sec:rds} and it satisfies Assumptions \ref{ass1} - \ref{ass3}.
\end{theorem}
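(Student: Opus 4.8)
\emph{Proof plan.} First I would invoke the construction of Section~\ref{sec:flowtoRDS} (following \citep{Arnold95} and \citep{Dimitroff06}): the stationary independent increments of $F$, hence of $\flow$, give precisely the i.i.d.\ structure of Section~\ref{sec:rds} with $f_0(\omega)=\flow_{0,1}$; the hypothesis $F\in B^{k,1}_{ub}$ with $k\geq2$ together with \citep[Theorem 4.6.5]{Kunita90} makes $f_0(\omega)$ almost surely a $C^2$ diffeomorphism depending continuously on $\omega$ in the topology of uniform convergence on compacts for derivatives up to order $2$; and the hypothesis that the backward semimartingale field also lies in $B^{k,1}_{ub}$ gives the same for $f_0(\omega)^{-1}$. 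Thus the discretized flow is a random dynamical system in the sense of Section~\ref{sec:rds}, and, since $\nu^\N\times\mu$ is a probability measure and Assumption~\ref{ass3} implies Assumption~\ref{ass1}, it remains to check Assumptions~\ref{ass1b}, \ref{ass2}, \ref{ass2b} and \ref{ass3}.

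The key input I would use is a family of moment estimates from \citep[Chapter 4]{Kunita90}. Differentiating the stochastic differential equation \rref{eq:sde}, the processes $D_x\flow_{0,t}$, $D_x^2\flow_{0,t}$ and their backward analogues solve \emph{linear} stochastic differential equations whose local characteristics are built from the spatial derivatives of $a$ and $b$, which are bounded \emph{uniformly in space} because $F$ and the backward field belong to $B^{k,1}_{ub}$ with $k\geq2$. Via It\^o's formula and a time change, this shows that $\log^+\abs{D_y\flow_{0,1}}$, $\log^+\abs{D_y^2\flow_{0,1}}$, $\abs{\log\abs{\determinante D_y\flow_{0,1}}}$ and the corresponding backward quantities are dominated by sub--Gaussian random variables whose parameters do not depend on $y\in\R^d$. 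Combining these pointwise bounds with the H\"older regularity of $\flow_{0,1}$ (again from \citep{Kunita90}) and Kolmogorov's continuity theorem, I would obtain, for $j=1,2$,
\begin{align} \label{eq:planflowbound}
\E{\sup_{y\in B(z,\rho)}\log^+\abs{D_y^{\,j}\flow_{0,1}}}\leq C\bigl(1+\log^+(\abs z+\rho)\bigr)^{1/2},\qquad z\in\R^d,\ \rho>0,
\end{align}
together with the same bound for $\sup_{y\in B(z,\rho)}\abs{\log\abs{\determinante D_y\flow_{0,1}}}$ and for the backward flow; the exponent $\tfrac12$ reflects that a supremum over a ball of radius $\rho$ amounts to a maximum of order $(\abs z+\rho)^d$ sub--Gaussian quantities, and it is exactly what forces the hypothesis on $\mu$ to be \rref{eq:assumptionOnMu} rather than $\int\log(1+\abs x)\,\dx\mu<\infty$. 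Finally, linear growth of $b$ gives $\E{\abs{\flow_{0,1}(y)}}\leq C(1+\abs y)$, hence $\E{\abs{f^i_\omega x}}\leq C_i(1+\abs x)$ and, by Jensen, $\E{\log^+\abs{f^i_\omega x}}\leq c_i(1+\log(1+\abs x))$ for every $i\geq0$.

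Assumptions~\ref{ass1b}, \ref{ass2} and \ref{ass2b} then follow with little effort. For Assumption~\ref{ass2b}, $\log\abs{\determinante D_xf_0(\omega)}$ is in $\mathcal{L}^1$ with a bound uniform in $x$, hence integrable against $\nu^\N\times\mu$. For Assumption~\ref{ass1b}, the inverse--function theorem gives $D_0F^{-1}_{(\omega,x),0}=(D_xf_0(\omega))^{-1}$, whose log--norm is again bounded in $\mathcal{L}^1$ uniformly in $x$. For Assumption~\ref{ass2} one has $\sup_{\xi\in B_x(0,1)}\abs{D^2_\xi F_{(\omega,x),0}}=\sup_{y\in B(x,1)}\abs{D^2_yf_0(\omega)}$, while $\sup_{\xi\in B_x(0,1)}\abs{D^2_{F_{(\omega,x),0}(\xi)}F^{-1}_{(\omega,x),0}}$ can, using the chain--rule identity expressing $D^2f_0(\omega)^{-1}$ through $D^2f_0(\omega)$ and $(Df_0(\omega))^{-1}$ (and $\abs{(Df_0)^{-1}}\leq\abs{Df_0}^{d-1}\abs{\determinante Df_0}^{-1}$), be bounded by a sum of $\log^+$ of the derivative, the second derivative and $\abs{\log}$ of the Jacobian of $f_0(\omega)$ over the \emph{unit} ball $B(x,1)$; applying \rref{eq:planflowbound} with $z=x$, $\rho=1$ and integrating against $\mu$ gives finiteness by \rref{eq:assumptionOnMu}.

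The step I expect to be the main obstacle is Assumption~\ref{ass3} for general $n$. Fix $n$, put $\Lambda_i(\omega,x):=\sup_{\xi\in B(x,1)}\abs{D_\xi f^i_\omega}$, note $f^i_\omega(B(x,1))\subset B(f^i_\omega x,\Lambda_i(\omega,x))$, and use the chain rule together with subadditivity of $\log^+$ under products to get
\begin{align} \label{eq:planchain}
\sup_{\xi\in B(x,1)}\log^+\abs{D_\xi f^n_\omega}\leq\sum_{i=0}^{n-1}\ \sup_{\eta\in B(f^i_\omega x,\,\Lambda_i(\omega,x))}\log^+\abs{D_\eta f_i(\omega)}.
\end{align}
Since $f_i(\omega)$ is independent of $\sigma(f_0(\omega),\dots,f_{i-1}(\omega))$, and hence of $(f^i_\omega x,\Lambda_i(\omega,x))$, conditioning and \rref{eq:planflowbound} bound the $i$-th summand of \rref{eq:planchain} in $\mathcal{L}^1$ by $C\,\E{(1+\log^+(\abs{f^i_\omega x}+\Lambda_i(\omega,x)))^{1/2}}$. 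A short induction on $i$ based on \rref{eq:planchain}, \rref{eq:planflowbound}, Jensen's inequality for the concave maps $t\mapsto t^{1/2}$ and $t\mapsto\log^+t$, and the moment bounds above, shows that $\E{\log^+\Lambda_i(\omega,x)}\leq c_i(1+\log(1+\abs x))^{1/2}$; summing, $\E{\sup_{\xi\in B(x,1)}\log^+\abs{D_\xi f^n_\omega}}\leq C_n(1+\log(1+\abs x))^{1/2}$, and integrating against $\mu$ and invoking \rref{eq:assumptionOnMu} gives Assumption~\ref{ass3} (hence Assumption~\ref{ass1}). The delicate point is precisely the iteration \rref{eq:planchain}: the radii $\Lambda_i$ grow exponentially in $i$, and one must check that they stay controlled on the logarithmic scale and that the exponent $\tfrac12$ in \rref{eq:planflowbound} is not degraded by the $n$-fold composition, so that only \rref{eq:assumptionOnMu} --- not a stronger moment condition on $\mu$ --- is needed.
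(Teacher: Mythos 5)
Your plan and the paper agree on the reduction (stationary independent increments of $F$, hence of $\flow$, give the i.i.d.\ structure of Section~\ref{sec:rds}, and the backward field being in $B^{k,1}_{ub}$ controls the inverse), but diverge on how the integrability assumptions are actually obtained. The paper does not derive a supremum estimate like \rref{eq:planflowbound} from scratch; it invokes \citep[Theorem~2.2]{Imkeller99}, which states that for each multi-index $\alpha$ and each $T>0$ there exist $c,\gamma>0$ such that $\sup_{y\in\R^d}\sup_{0\leq s,t\leq T}\abs{D^\alpha_y\flow_{s,t}}\,e^{-\gamma(\log^+\abs y)^{1/2}}$ is $\Phi_c$-integrable, and then uses \citep[Lemma~1.1]{Imkeller99} together with $x\leq e^{x^2}$ to make $\log^+$ of this random variable integrable. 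Your estimate \rref{eq:planflowbound} is essentially a restatement of that theorem, but your derivation of it (``linear SDEs, sub-Gaussian pointwise bounds, Kolmogorov continuity'') is only sketched, and this is precisely the hard analytic content of the reference. As written, this is a genuine gap: the whole theorem rests on that uniform-in-space moment bound, and a heuristic for it is not a proof; one should cite \citep[Theorem~2.2]{Imkeller99} rather than attempt to reconstruct it.

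The more interesting divergence is Assumption~\ref{ass3}, where you anticipate trouble and set up the iterated chain-rule bound \rref{eq:planchain} with a conditioning argument. The paper sidesteps this entirely: since $f^n_\omega=\flow_{0,n}$, one simply applies \citep[Theorem~2.2]{Imkeller99} on the interval $[0,n]$, obtaining a single random variable $Y^n_\alpha=\sup_{y\in\R^d}\sup_{0\le s,t\le n}\abs{D^\alpha_y\flow_{s,t}}e^{-\gamma_n(\log^+\abs y)^{1/2}}$ (with $n$-dependent constants $c_n,\gamma_n$) which dominates $\sup_{\xi\in B(x,1)}\abs{D_\xi f^n_\omega}$ up to the same polynomial weight in $\log^+\abs x$; the two-line computation already used for Assumption~\ref{ass1} then finishes. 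Your iteration \rref{eq:planchain} does in fact close on the logarithmic scale --- using $\log^+(a+b)\leq\log 2+\log^+a+\log^+b$, Jensen for the concave map $t\mapsto t^{1/2}$, and $\E{\abs{f^i_\omega x}}\leq C_i(1+\abs x)$ --- so the ``delicate point'' you flag is survivable, but it is unnecessary work; the paper's application of the cited theorem on a longer time horizon buys you the result in one step and avoids having to track how the random radii $\Lambda_i(\omega,x)$ propagate through the composition.
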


\begin{proof}
From Section \ref{sec:flowtoRDS} we know that the discretized flow can be seen as a random dynamical system in the sense of Section \ref{sec:rds}. 

Let us prove that the integrability assumptions are satisfied. Since the norm of the derivative of order $k$ can be bounded (in both directions) by the sum of norms of partial derivatives up to order $k$ (neglecting a constant) it suffices to estimate each partial derivative. We will apply \citep[Theorem 2.2]{Imkeller99} to prove that the assumptions from Section \ref{sec:rds} are satisfied.  So let $\alpha$ be a multi index with $\abs{\alpha} = 1$. Since the generating semimartingale field is an element of $B^{k,1}_{ub}$ for $k \geq 2$ by \citep[Theorem 2.2]{Imkeller99} there exists $c, \gamma > 0$ such that the random variable
\begin{align*}
Y_\alpha = \sup_{y \in \R^d} \sup_{0\leq s,t \leq 1} \abs{D^\alpha_y \flow_{s,t}} e^{-\gamma(\log^+\abs{y})^{1/2}} %= \sup_{y \in \R^d} \abs{D^\alpha_y f_{0}(\omega)} e^{-\gamma(\log^+\abs{y})^{1/2}}
\end{align*}
is $\Phi_c$-integrable, where $\Phi_c(x):= \int_1^\infty \exp(-ct^2)x^t \dx t$. By \citep[Lemma 1.1]{Imkeller99} we have for $x \geq 1$ the inequality
\begin{align*}
e^{(\log x)^{2}/4c} e^{-(\log K)^{2}/4c} \leq \Phi_{c}(x),
\end{align*}
where the constant $K$ only depends on $c$ and is defined in \citep[Lemma 1.1]{Imkeller99}. Hence using the inequality $x \leq e^{x^2}$ and the fact that $\Phi_{c}(x) \geq 0$ for $x \geq 0$ we get for each $(\omega,x) \in \Omega^{\N}\times\R^{d}$
\begin{align} \label{eq:ass1estimate}
&\log^{+}\abs{D^{\alpha}_{x}f_{0}(\omega)} = \log^{+}\abs{D^{\alpha}_{x}\flow_{0,1}(\bar\omega)}
\leq \log^{+}\left(Y_{\alpha}\right) + \gamma(\log^+\abs{x})^{1/2}\notag\\
&\hspace{10ex}\leq \mathbf{1}_{\{Y_{\alpha} < 1\}}\Phi_c(Y_\alpha) + \mathbf{1}_{\{Y_{\alpha} \geq 1\}} 2\sqrt{c} \exp\left(\frac{(\log K)^2}{4c}\right) \Phi_c(Y_\alpha)+ \gamma(\log^+\abs{x})^{1/2}
\end{align}
which yields the Assumption \ref{ass1} since the first and second term are integrable with respect to $\mathbf{P}$ where as the third one is integrable with respect to $\mu$ by \rref{eq:assumptionOnMu}. Because of 
\begin{align} \label{eq:LogRelation}
\abs{\log\abs{D_{f_{0}(\omega)x}f_{0}(\omega)^{-1}}} \leq \log^{+}\abs{D_{f_{0}(\omega)x}f_{0}(\omega)^{-1}} + \log^{+}\abs{D_{x}f_{0}(\omega)}
\end{align}
and since the flow property implies $f_{0}(\omega)^{-1}=\flow_{0,1}^{-1} = \flow_{1,0}$ Assumption \ref{ass1b} follows from Assumption \ref{ass1} and from \rref{eq:ass1estimate} applied to the inverse using the invariance of $\mu$.

Assumption \ref{ass2} follows similarly. Let $\abs{\alpha} \leq 2$. Since the exponential map on $\R^{d}$ is a simple translation we have for each $(\omega,x) \in \Omega^{\N}\times\R^{d}$
\begin{align*}
\abs{D^\alpha_\xi F_{(\omega,x),0}} = \abs{D^\alpha_{\exp_x(\xi)} f_0(\omega)}.
\end{align*}
This implies for $(\omega,x) \in \Omega^{\N}\times\R^{d}$
\begin{align} \label{eq:ass2estimate}
&\log^{+}\left(\sup_{\xi \in B_x(0,1)} \abs{D^\alpha_\xi F_{(\omega,x),0}}\right) = \log^{+}\left(\sup_{\xi \in B_x(0,1)} \abs{D^\alpha_{\exp_x(\xi)} f_0(\omega)}\right) \notag\\
&\hspace{6ex}\leq\log^{+}\left(\sup_{\xi \in B_x(0,1)} \abs{D^\alpha_{\exp_{x}(\xi)} \varphi_{0,1}} e^{-\gamma \left(\log^+\abs{\exp_{x}(\xi)}\right)^{1/2}}\right) + \sup_{\xi \in B_x(0,1)} \gamma \left(\log^+\abs{\exp_{x}(\xi)}\right)^{1/2}\notag\\
&\hspace{6ex}\leq\log^{+}\left(Y_{\alpha}\right) + \gamma\left(\log^+(\abs{x} + 1)\right)^{1/2},
\end{align}
which proves via \rref{eq:ass1estimate} the integrability of the positive part and analogously of
\begin{align*}
&\log^{+}\left(\sup_{\xi \in B_x(0,1)} \abs{D^\alpha_{F_{(\omega,x),0}(\xi)} F_{(\omega,x),0}^{-1}}\right).
\end{align*}
Thus Assumption \ref{ass2} follows via \rref{eq:LogRelation}.

Because the determinante of a matrix can be bounded by the Euclidean norm, i.e.
\begin{align*}
\abs{\determinante D_{x}f_{0}(\omega)} \leq {\abs{D_{x}f_{0}(\omega)}}^{d},
\end{align*}
inequality \rref{eq:LogRelation} implies
\begin{align*}
\abs{\log\abs{\determinante D_{x}f_{0}(\omega)}} \leq d \abs{\log{\abs{D_{x}f_{0}(\omega)}}} \leq d \log^{+}{\abs{D_{x}f_{0}(\omega)}} + d \log^{+}{\abs{D_{f_{0}(\omega)x}f_{0}(\omega)^{-1}}},
\end{align*}
which proves Assumption \ref{ass2b} via Assumption \ref{ass1} and \ref{ass1b}.

Finally let us define for $\alpha \leq k$ and $n \in \N$
\begin{align*}
Y_{\alpha}^{n} := \sup_{y \in \R^d} \sup_{0\leq s,t \leq n} \abs{D^\alpha_y \flow_{s,t}} e^{-\gamma(\log^+\abs{y})^{1/2}}.
\end{align*}
Then for fixed $n \in \N$ by \citep[Theorem 2.2]{Imkeller99} there exist $c_{n}, \gamma_{n} > 0$ such that $Y_{\alpha}^{n}$ is $\Psi_{c_{n}}$-integrable and thus Assumption \ref{ass3} follows analogously via \rref{eq:ass2estimate}.
\end{proof}

\section*{Acknowledgement}
The present research was supported by the International Research Training Group {\it Stochastic Models of Complex Processes} funded by the German Research Council (DFG). The author gratefully thanks Michael Scheutzow and Simon Wasserroth from TU Berlin for their support and several fruitful discussions.

\bibliography{bibliography}
\bibliographystyle{amsplain}

\end{document}